\definecolor{Red}{cmyk}{0,1,1,0}
\definecolor{verde}{cmyk}{1,0,1,0}
\definecolor{loka}{cmyk}{.5,0,1,.5}
\definecolor{azul}{cmyk}{1,1,0,0}
\numberwithin{equation}{section}
\newcommand{\be}{\begin{equation}}
\newcommand{\ee}{\end{equation}}
\newtheorem{theorem}{Theorem}
\newtheorem{definition}{Definition}
\newtheorem{lemma}{Lemma}
\newtheorem{remark}{Remark}
\newtheorem{corollary}[equation]{Corollary}
\begin{document}
\title{Truncated $\mathcal{V}$-fractional Taylor's formula with applications}
\author{J. Vanterler da C. Sousa$^1$}
\address{$^1$ Department of Applied Mathematics, Institute of Mathematics,
 Statistics and Scientific Computation, University of Campinas --
UNICAMP, rua S\'ergio Buarque de Holanda 651,
13083--859, Campinas SP, Brazil\newline
e-mail: {\itshape \texttt{ra160908@ime.unicamp.br, capelas@ime.unicamp.br }}}

\author{E. Capelas de Oliveira$^1$}

\begin{abstract}In this paper, we present and prove a new truncated $\mathcal{V}$-fractional Taylor's formula using the truncated $\mathcal{V}$-fractional variation of constants formula. In this sense, we present the truncated $\mathcal{V}$-fractional Taylor's remainder by means of $\mathcal{V}$-fractional integral, essential for analyzing and comparing the error, when approaching functions by polynomials. From these new results, some applications were made involving some inequalities, specifically, we generalize the Cauchy-Schwartz inequality. 

\vskip.5cm
\noindent
\emph{Keywords}: Truncated $\mathcal{V}$-fractional derivative, $\mathcal{V}$-fractional integral, truncated $\mathcal{V}$-fractional Taylor's remainder, truncated $\mathcal{V}$-fractional Taylor's theorem.
\newline 
MSC 2010 subject classifications. 30K05; 41A58; 26A06; 26A33; 26DXX.
\end{abstract}
\maketitle

\section{Introduction}

The integer-order differential and integral calculus developed by Leibniz and Newton was a great discovery in mathematics. The emergence of new tools and methods for calculating angles, finding solutions to an equation, maximum and minimum of a function, and even making use of a calculator to find the logarithm or exponential of a number, all of these and more can be done due to the Taylor series. Given its importance and relevance in applications, Lagrange realized that the Taylor series was the basic principle of calculus. Thus, over time, many researchers have been interested in studying the Taylor series and its applications, fundamental in various areas of knowledge, such as computation, numerical analysis, engineering, economics, and others \cite{SJBR,RARP,COU}. It is well known that the Taylor series theory is very important to approximate functions by polynomials around any point $x=a$, however, when making certain applications, it is a mistake to replace the function with a given polynomial, although this error is small. Although, this margin of error, its precision and efficiency, make it a powerful tool for applications.

The fractional calculus has proved to be very important and efficient to describe physicals problem and properly for the theoretical advancement in mathematics, physics and other areas. Since then, there are several definitions of fractional derivatives and fractional integrals, of which we mention: Riemann-Liouville, Caputo, Hadamard, Riesz among others, being these based on non-local operators \cite{IP,ECJT,RHM}. However, there are yet some derivatives that fail in the aspect of what is a fractional derivative, such as: the chain rule, the Leibniz rule, and others \cite{JAM,MDJA}.

On the other hand, the concept of a local fractional derivative that has the classical properties of the integer order calculus, has acquired relevance in the scientific community. Recently, Sousa and Oliveira \cite{JEC2} introduced the truncated $\mathcal{V}$-fractional derivative in the domain $\mathbb{R}$, satisfying classical properties of the integer-order calculus, having as special property, to unify five other formulations of local fractional derivatives: conformable fractional, alternative fractional, truncated alternative fractional, $M$-fractional, truncated $M$-fractional \cite{UNT2,KRHA,JEC,JEC1}.

In 2014, Anderson \cite{DRA,DRA1} using local fractional derivatives and integrals called alternative and conformable, proposed an extension of the Taylor's formula in the context of iterated fractional differential equations and discussed some applications. Noting, Anderson's approach is similar to that used for the whole-order derivatives \cite{KWPA}. This paper is devoted to introduces a truncated $\mathcal{V}$-fractional Taylor formula that generalizes these two formulations. An interesting feature of our results is the fact,  in a tentative to approximate functions by polynomials, ensure that the error is smaller in relation to the integer-order derivative and to with the fractional derivative used above.

This paper is organized as follows: in section 2, through the truncated six-parameter Mittag-Leffler function, we present the definition of truncated $\mathcal{V}$-fractional derivative. Also, we present the $\mathcal{V}$-fractional integral and some results derived from both definitions. In section 3, we introduce and prove one of the important results of this article, the truncated $\mathcal{V}$-fractional variations of constant theorem and the truncated $\mathcal{V}$-fractional Taylor's formula. In section 4, to complement the Taylor's formula, we introduce the truncated $\mathcal{V}$-fractional Taylor's remainder by means of $\mathcal{V}$-fractional integral. In section 5, is devoted to $\mathcal{V}$-fractional Hölder inequality and to perform some applications using the Taylor's formula, that is, inequalities. Concluding remarks close the paper.

\section{Preliminaries}

In this section, we will present the definition of the truncated $\mathcal{V}$-fractional derivative through the truncated six parameters Mittag-Leffler function and the gamma function. In this sense, we will present theorems related to continuity and linearity, product, divisibility, as well as the chain rule. We introduce the $\mathcal{V}$-fractional integral of a function $f$. From the definition, we present some theorem about the $\mathcal{V}$-fractional integral; the inverse property, the fundamental theorem of calculus and the integration by parts theorem.

Then, we begin with the definition of the six parameters truncated Mittag-Leffler function given by \cite{JEC2},
\begin{equation}\label{A9}
_{i}\mathbb{E}_{\gamma ,\beta ,p}^{\rho ,\delta ,q}\left( z\right) =\overset{i}{%
\underset{k=0}{\sum }}\frac{\left( \rho \right) _{qk}}{\left( \delta \right)
_{pk}}\frac{z^{k}}{\Gamma \left( \gamma k+\beta \right) },
\end{equation}
being $\gamma ,\beta ,\rho ,\delta \in \mathbb{C}$ and $p,q>0$ such that ${Re}\left( \gamma \right) >0$, ${Re}\left( \beta \right) >0$, ${Re}\left( \rho \right) >0$, ${Re}\left( \delta \right) >0$, ${Re}\left( \gamma \right) +p\geq q$ and $\left( \delta \right) _{pk}$, $\left( \rho \right) _{qk}$ given by
\begin{equation}\label{A6}
\left( \rho \right) _{qk}=\frac{\Gamma \left( \rho +qk\right) }{\Gamma\left( \rho \right) },
\end{equation}
a generalization of the Pochhammer symbol and $\Gamma(\cdot)$ is the function gamma.

From Eq.(\ref{A9}), we introduce the following truncated function, denoted by $_{i}H^{\rho,\delta,q}_{\gamma,\beta,p}(z)$, by means of
\begin{equation}\label{A10}
_{i}H_{\gamma ,\beta ,p}^{\rho ,\delta ,q}\left( z\right) :=\Gamma \left( \beta
\right) \;_{i}\mathbb{E}_{\gamma ,\beta ,p}^{\rho ,\delta ,q}\left( z\right) =\Gamma
\left( \beta \right) \overset{i}{\underset{k=0}{\sum }}\frac{\left( \rho
\right) _{kq}}{\left( \delta \right) _{kp}}\frac{z^{k}}{\Gamma \left( \gamma
k+\beta \right) }.
\end{equation}

In order to simplify notation, in this work, if the truncated $\mathcal{V}$-fractional derivative of order $\alpha$, according to Eq.(\ref{A11}) below, of a function $f$ exists, we simply say that the $f$ function is $\alpha$-differentiable.

So, we start with the following definition, which is a generalization of the usual definition of a derivative presented as a particular limit.

\begin{definition}\label{def7} Let $f:\left[ 0,\infty \right) \rightarrow \mathbb{R}$. For $0<\alpha <1$ the truncated $\mathcal{V}$-fractional derivative of $f$ of order $\alpha$, denoted by $_{i}^{\rho }\mathcal{V}_{\gamma ,\beta ,\alpha }^{\delta ,p,q}(\cdot)$, is defined as
\begin{equation}\label{A11}
_{i}^{\rho }\mathcal{V}_{\gamma ,\beta ,\alpha }^{\delta ,p,q}f\left( t\right) :=%
\underset{\epsilon \rightarrow 0}{\lim }\frac{f\left( t\;_{i}H_{\gamma ,\beta
,p}^{\rho ,\delta ,q}\left( \epsilon t^{-\alpha }\right) \right) -f\left(
t\right) }{\epsilon },
\end{equation}
for all $t>0$, $_{i}H_{\gamma ,\beta ,p}^{\rho ,\delta ,q}\left( \cdot\right) $ is a truncated function  as defined in {\rm Eq.(\ref{A10})} and being $\gamma ,\beta ,\rho ,\delta \in \mathbb{C}$ and $p,q>0$ such that ${Re}\left( \gamma \right) >0$, ${Re}\left( \beta \right) >0$, ${Re}\left( \rho \right) >0$, ${Re}\left( \delta \right) >0$, ${Re}\left( \gamma \right) +p\geq q$ and $\left( \delta \right) _{pk}$, $\left( \rho \right) _{qk}$ given by {\rm Eq.(\ref{A6})} {\rm \cite{JEC2}}. 
\end{definition}

Note that, if $f$ is differentiable in some $(0,a)$, $a>0$ and $\underset{t\rightarrow 0^{+}}{\lim }\, _{i}^{\rho }\mathcal{V}_{\gamma ,\beta ,\alpha }^{\delta ,p,q}f\left( t\right) $ exist, then we have
\begin{equation*}
_{i}^{\rho }\mathcal{V}_{\gamma ,\beta ,\alpha }^{\delta ,p,q}f\left(
0\right) =\underset{t\rightarrow 0^{+}}{\lim }\,_{i}^{\rho }\mathcal{V}_{\gamma ,\beta ,\alpha }^{\delta ,p,q}f\left( t\right).
\end{equation*}

Below, we recover six theorems (the proof can be found in \cite{JEC2}) without proof which are important in what follows.

\begin{theorem}\label{teo1} If the function $f:\left[ 0,\infty \right) \rightarrow \mathbb{R}$ is $\alpha $-differentiable for $t_{0}>0$, with $0<\alpha \leq 1$, then $f$ is continuous in $t_{0}$. 
\end{theorem}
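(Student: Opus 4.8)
The plan is to mimic the classical proof that differentiability implies continuity, adapted to the limit definition in Eq.~(\ref{A11}). First I would observe that the hypothesis that $f$ is $\alpha$-differentiable at $t_0>0$ means the limit
\begin{equation*}
{}_{i}^{\rho}\mathcal{V}_{\gamma,\beta,\alpha}^{\delta,p,q}f(t_0)=\lim_{\epsilon\to 0}\frac{f\bigl(t_0\;{}_{i}H_{\gamma,\beta,p}^{\rho,\delta,q}(\epsilon t_0^{-\alpha})\bigr)-f(t_0)}{\epsilon}
\end{equation*}
exists and is a finite real number. The key algebraic step is to write, for $\epsilon\neq 0$,
\begin{equation*}
f\bigl(t_0\;{}_{i}H_{\gamma,\beta,p}^{\rho,\delta,q}(\epsilon t_0^{-\alpha})\bigr)-f(t_0)=\frac{f\bigl(t_0\;{}_{i}H_{\gamma,\beta,p}^{\rho,\delta,q}(\epsilon t_0^{-\alpha})\bigr)-f(t_0)}{\epsilon}\cdot\epsilon,
\end{equation*}
and then take $\epsilon\to 0$: the first factor tends to the finite derivative value and the second to $0$, so the product tends to $0$. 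Hence $f\bigl(t_0\;{}_{i}H_{\gamma,\beta,p}^{\rho,\delta,q}(\epsilon t_0^{-\alpha})\bigr)\to f(t_0)$ as $\epsilon\to 0$.

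Next I would translate this into genuine continuity of $f$ at $t_0$, i.e.\ $\lim_{h\to 0}f(t_0+h)=f(t_0)$. The bridge is the behaviour of the truncated function $_{i}H_{\gamma,\beta,p}^{\rho,\delta,q}$ near $0$: from its series definition in Eq.~(\ref{A10}) one has $_{i}H_{\gamma,\beta,p}^{\rho,\delta,q}(0)=\Gamma(\beta)/\Gamma(\beta)=1$, and since it is a polynomial in its argument it is continuous, with derivative at $0$ equal to $\Gamma(\beta)(\rho)_q/\bigl((\delta)_p\Gamma(\gamma+\beta)\bigr)$. Therefore, as $\epsilon\to 0$, the quantity $h(\epsilon):=t_0\;{}_{i}H_{\gamma,\beta,p}^{\rho,\delta,q}(\epsilon t_0^{-\alpha})-t_0$ tends to $0$, and in fact $h(\epsilon)\sim c\,\epsilon$ for a nonzero constant $c$ (assuming the relevant parameters make this leading coefficient nonzero), so $\epsilon\mapsto h(\epsilon)$ is, for small $\epsilon$, a continuous bijection onto a neighbourhood of $0$. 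Composing, $f(t_0+h(\epsilon))\to f(t_0)$; and because every small increment $h$ of the argument can be realized as $h(\epsilon)$ for a suitable small $\epsilon$, we conclude $f(t_0+h)\to f(t_0)$ as $h\to 0$, which is exactly continuity at $t_0$.

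The main obstacle is the last step — justifying that convergence along the special curve $\epsilon\mapsto t_0\;{}_{i}H_{\gamma,\beta,p}^{\rho,\delta,q}(\epsilon t_0^{-\alpha})$ upgrades to convergence along all sequences approaching $t_0$. This requires knowing that $_{i}H_{\gamma,\beta,p}^{\rho,\delta,q}$ is locally invertible near $0$, which in turn needs its derivative at $0$ to be nonzero; one should either impose this as a (harmless) standing assumption on the parameters or argue it from ${Re}(\gamma)+p\ge q$ together with the positivity conditions in Definition~\ref{def7}. A cleaner alternative that sidesteps invertibility: show directly that $f$ is $\alpha$-differentiable at $t_0$ implies $f$ is differentiable at $t_0$ in the ordinary sense (by the chain-rule relation $_{i}^{\rho}\mathcal{V}_{\gamma,\beta,\alpha}^{\delta,p,q}f(t_0)=c\,t_0^{1-\alpha}f'(t_0)$, which follows from the same series expansion), and then invoke the classical theorem that ordinary differentiability implies continuity. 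I would present the first route as the primary argument and mention the second as a remark, since the chain-rule-type identity is presumably among the six recovered theorems from \cite{JEC2}.
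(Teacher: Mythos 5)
Your proposal is correct, and its first half --- writing $f\bigl(t_0\,{}_iH_{\gamma,\beta,p}^{\rho,\delta,q}(\epsilon t_0^{-\alpha})\bigr)-f(t_0)$ as the difference quotient times $\epsilon$ and letting $\epsilon\to 0$ --- is exactly the argument this paper relies on: Theorem \ref{teo1} is stated here without proof, and the cited proof in \cite{JEC2} consists of precisely that product decomposition, concluding directly that $f$ is continuous at $t_0$. What you add is genuinely needed and is glossed over in that standard argument: the decomposition only shows that $f$ converges to $f(t_0)$ along the curve $\epsilon\mapsto t_0\,{}_iH_{\gamma,\beta,p}^{\rho,\delta,q}(\epsilon t_0^{-\alpha})$, and upgrading this to continuity requires exactly the local-invertibility observation you make, namely that ${}_iH_{\gamma,\beta,p}^{\rho,\delta,q}(0)=1$ and that the derivative of this polynomial map at $0$, $\Gamma(\beta)(\rho)_q/\bigl((\delta)_p\Gamma(\gamma+\beta)\bigr)$, is nonzero, so that every small increment $h$ of the argument is realized as $h(\epsilon)$ for some small $\epsilon$. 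Two caveats. First, for genuinely complex $\gamma,\beta,\rho,\delta$ this leading coefficient --- and hence $t_0\,{}_iH_{\gamma,\beta,p}^{\rho,\delta,q}(\epsilon t_0^{-\alpha})$ itself --- need not be real, so your argument (and indeed Definition \ref{def7} itself) implicitly requires the parameters to keep the argument of $f$ inside $[0,\infty)$; that is a defect of the setup rather than of your proof, but it is where the ``harmless standing assumption'' you mention actually lives. Second, your \emph{cleaner alternative} (deduce ordinary differentiability at $t_0$ and then quote the classical theorem) does not in fact sidestep the invertibility issue: item 5 of Theorem \ref{teo2} goes in the opposite direction (ordinary differentiability implies the formula for the $\alpha$-derivative), and recovering $f'(t_0)$ from the $\alpha$-derivative as a limit of ordinary difference quotients requires the same substitution $h=h(\epsilon)$ and hence the same local surjectivity; it should be regarded as a repackaging of your primary route, not an independent one.
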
 

\begin{theorem}\label{teo2} Let $0<\alpha\leq 1$, $a,b\in\mathbb{R}$, $\gamma ,\beta ,\rho ,\delta \in \mathbb{C}$ and $p,q>0$ such that ${Re}\left( \gamma \right) >0$, ${Re}\left( \beta \right) >0$, ${Re}\left( \rho \right) >0$, ${Re}\left( \delta \right) >0$, ${Re}\left( \gamma \right) +p\geq q$ and $f,g$ $\alpha$-differentiable, for $t>0$. Then,
\begin{enumerate}
\item $_{i}^{\rho }\mathcal{V}_{\gamma ,\beta ,\alpha }^{\delta ,p,q}\left( af+bg\right)
\left( t\right) =a\, _{i}^{\rho }\mathcal{V}_{\gamma ,\beta ,\alpha }^{\delta
,p,q}f\left( t\right) +b\, _{i}^{\rho }\mathcal{V}_{\gamma ,\beta ,\alpha
}^{\delta ,p,q}g\left( t\right) $

\item $_{i}^{\rho }\mathcal{V}_{\gamma ,\beta ,\alpha }^{\delta ,p,q}\left( f\cdot g\right)
\left( t\right) =f\left( t\right) \, _{i}^{\rho }\mathcal{V}_{\gamma ,\beta
,\alpha }^{\delta ,p,q}g\left( t\right)+g\left( t\right) \,
_{i}^{\rho }\mathcal{V}_{\gamma ,\beta ,\alpha }^{\delta ,p,q}f\left( t\right) 
$

\item $_{i}^{\rho }\mathcal{V}_{\gamma ,\beta ,\alpha }^{\delta ,p,q}\left( \frac{f}{g}%
\right) \left( t\right) =\displaystyle\frac{g\left( t\right) \,_{i}^{\rho }\mathcal{V}_{\gamma
,\beta ,\alpha }^{\delta ,p,q}f\left( t\right) -f\left( t\right)
\, _{i}^{\rho }\mathcal{V}_{\gamma ,\beta ,\alpha }^{\delta ,p,q}g\left( t\right) }{\left[ g\left( t\right) \right] ^{2}}$

\item $_{i}^{\rho }\mathcal{V}_{\gamma ,\beta ,\alpha }^{\delta ,p,q}\left( c\right) =0$, where $f(t)=c$ is a constant.

\item If $f$ is differentiable, then $_{i}^{\rho }\mathcal{V}_{\gamma ,\beta ,\alpha }^{\delta ,p,q}f\left( t\right) =\displaystyle\frac{t^{1-\alpha }\Gamma \left( \beta \right) \left( \rho \right) _{q}}{\Gamma\left( \gamma +\beta \right) \left( \delta \right) _{p}}\frac{df\left(
t\right) }{dt}$.

\item $_{i}^{\rho }\mathcal{V}_{\gamma ,\beta ,\alpha }^{\delta ,p,q}\left(
t^{a}\right) =\displaystyle\frac{\Gamma \left( \beta \right) \left( \rho \right) _{q}}{%
\Gamma \left( \gamma +\beta \right) \left( \delta \right) _{p}}at^{a-\alpha }.$
\end{enumerate}
\end{theorem}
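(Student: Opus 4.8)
The plan is to use item~(5) as a bridge to ordinary differential calculus, deduce (6) from it at once, and obtain the algebraic rules (1)--(4) directly from the defining limit \eqref{A11}, so that only $\alpha$-differentiability (hence continuity, by Theorem~\ref{teo1}) is invoked. First I would record the behaviour of the truncated function near $0$: since $(\rho)_{0}=(\delta)_{0}=1$, Eq.~\eqref{A10} gives ${}_{i}H_{\gamma ,\beta ,p}^{\rho ,\delta ,q}(0)=1$, and as ${}_{i}H_{\gamma ,\beta ,p}^{\rho ,\delta ,q}$ is a polynomial of degree $i$ in its argument,
\[
{}_{i}H_{\gamma ,\beta ,p}^{\rho ,\delta ,q}(z)=1+c_{1}\,z+z^{2}R(z),\qquad c_{1}:=\frac{\Gamma(\beta)(\rho)_{q}}{\Gamma(\gamma+\beta)(\delta)_{p}},
\]
with $R$ a polynomial and $c_{1}$ well-defined by the standing hypotheses on the parameters. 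Putting $z=\epsilon t^{-\alpha}$ and multiplying by $t$, the argument of $f$ in \eqref{A11} becomes
\[
u_{\epsilon}:=t\,{}_{i}H_{\gamma ,\beta ,p}^{\rho ,\delta ,q}(\epsilon t^{-\alpha})=t+c_{1}t^{1-\alpha}\epsilon+\epsilon^{2}t^{1-2\alpha}R(\epsilon t^{-\alpha}),
\]
so that $u_{\epsilon}\to t$ and $(u_{\epsilon}-t)/\epsilon\to c_{1}t^{1-\alpha}$ as $\epsilon\to0$.

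For (5), if $f$ is differentiable at $t>0$ then $f(u_{\epsilon})-f(t)=f'(t)(u_{\epsilon}-t)+\eta(\epsilon)$ with $\eta(\epsilon)=o(u_{\epsilon}-t)$; since $u_{\epsilon}-t=O(\epsilon)$ by the previous step, whence $\eta(\epsilon)/\epsilon\to0$, dividing the difference quotient in \eqref{A11} by $\epsilon$ and letting $\epsilon\to0$ gives ${}_{i}^{\rho }\mathcal{V}_{\gamma ,\beta ,\alpha }^{\delta ,p,q}f(t)=c_{1}t^{1-\alpha}f'(t)$, which is (5). Then (6) is the special case $f(t)=t^{a}$, for which $f'(t)=at^{a-1}$ and $c_{1}t^{1-\alpha}at^{a-1}=c_{1}at^{a-\alpha}$, as claimed.

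For the remaining items I would work from \eqref{A11} with $u_{\epsilon}$ as above. Linearity (1) is immediate, the difference quotient of $af+bg$ at $t$ being $a$ times that of $f$ plus $b$ times that of $g$, and limits are linear. For the product rule (2), use the identity
\[
f(u_{\epsilon})g(u_{\epsilon})-f(t)g(t)=f(u_{\epsilon})\bigl(g(u_{\epsilon})-g(t)\bigr)+g(t)\bigl(f(u_{\epsilon})-f(t)\bigr),
\]
divide by $\epsilon$, and let $\epsilon\to0$: since $u_{\epsilon}\to t$, continuity of $f$ (Theorem~\ref{teo1}) gives $f(u_{\epsilon})\to f(t)$, while the two quotients converge to the $\mathcal{V}$-derivatives of $g$ and of $f$, producing the stated Leibniz formula. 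The quotient rule (3) is the analogous add-and-subtract manipulation applied to $f/g$, using continuity of $g$ and $g(t)\neq0$ to control the denominator, and (4) holds because a constant function produces a vanishing numerator in \eqref{A11}. The only delicate point in the whole argument is the limit interchange in (5) — checking that the quadratic remainder from ${}_{i}H$ and the $o(u_{\epsilon}-t)$ coming from the differentiability of $f$ each contribute $o(\epsilon)$ after division by $\epsilon$; everything else reduces to routine algebra.
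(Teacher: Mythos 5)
Your argument is correct. Note, however, that the paper does not prove Theorem~\ref{teo2} at all: it is quoted from \cite{JEC2} with the explicit remark that the proofs are to be found there, so there is no in-paper proof to compare against. What you have written is the standard argument for this family of local ``fractional'' derivatives and is essentially what the cited source does: expand the truncated kernel as ${}_{i}H(z)=1+c_{1}z+O(z^{2})$ with $c_{1}=\Gamma(\beta)(\rho)_{q}/\bigl(\Gamma(\gamma+\beta)(\delta)_{p}\bigr)$, so that the argument $u_{\epsilon}$ satisfies $u_{\epsilon}\to t$ and $(u_{\epsilon}-t)/\epsilon\to c_{1}t^{1-\alpha}$, deduce item (5) and hence (6), and get (1)--(4) by the classical add-and-subtract manipulations on the difference quotient, using that $\alpha$-differentiability forces $f(u_{\epsilon})\to f(t)$. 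Your choice to prove (1)--(4) directly from the defining limit rather than via (5) is the right one, since those items assume only $\alpha$-differentiability, not ordinary differentiability; the only unstated hypothesis you rely on is $g(t)\neq 0$ in (3), which is implicit in the statement itself.
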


\begin{theorem}\label{teo3} {\rm\text{(Chain rule)}} Assume $f,g:(0,\infty)\rightarrow \mathbb{R}$  be two $\alpha$-differentiable functions where $0<\alpha\leq 1$. Let $\gamma ,\beta ,\rho ,\delta \in \mathbb{C}$ and $p,q>0$ such that ${Re}\left( \gamma \right) >0$, ${Re}\left( \beta \right) >0$, ${Re}\left( \rho \right) >0$, ${Re}\left( \delta \right) >0$, ${Re}\left( \gamma \right) +p\geq q$ then $\left( f\circ g\right)$ is $\alpha$-differentiable and for all $t>0$, we have
\begin{equation*}
_{i}^{\rho }\mathcal{V}_{\gamma ,\beta ,\alpha }^{\delta ,p,q}\left( f\circ
g\right) \left( t\right) =f^{\prime }\left( g\left( t\right) \right) \,
_{i}^{\rho }\mathcal{V}_{\gamma ,\beta ,\alpha }^{\delta ,p,q}g\left(
t\right),
\end{equation*}
for $f$ differentiable in $g(t)$.
\end{theorem}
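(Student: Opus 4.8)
The plan is to argue straight from Definition \ref{def7}, turning the difference quotient of the composition into a product and passing to the limit. Fix $t>0$. By Definition \ref{def7} applied to $f\circ g$ one must evaluate
\[
\lim_{\epsilon \to 0}\frac{f\!\left( g\!\left( t\;{}_{i}H_{\gamma ,\beta ,p}^{\rho ,\delta ,q}\!\left( \epsilon t^{-\alpha }\right)\right)\right)-f\!\left( g(t)\right)}{\epsilon }.
\]
First I would record two elementary facts: that ${}_{i}H_{\gamma ,\beta ,p}^{\rho ,\delta ,q}(0)=1$ — in (\ref{A10}) only the $k=0$ term survives and $(\rho)_{0}=(\delta)_{0}=1$ — and that $\epsilon \mapsto {}_{i}H_{\gamma ,\beta ,p}^{\rho ,\delta ,q}(\epsilon t^{-\alpha})$ is a polynomial in $\epsilon$, hence continuous, with value $1$ at $\epsilon=0$. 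Consequently $t\,{}_{i}H_{\gamma ,\beta ,p}^{\rho ,\delta ,q}(\epsilon t^{-\alpha})\to t$ as $\epsilon\to 0$; since $g$ is $\alpha$-differentiable it is continuous at $t$ by Theorem \ref{teo1}, so the inner increment
\[
\Delta_{\epsilon}:=g\!\left( t\;{}_{i}H_{\gamma ,\beta ,p}^{\rho ,\delta ,q}\!\left( \epsilon t^{-\alpha }\right)\right)-g(t)
\]
tends to $0$ as $\epsilon\to 0$.

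Next I would bring in the standard auxiliary function that makes the chain rule work even where $g$ repeats the value $g(t)$: set $\Phi(y)=\frac{f(y)-f(g(t))}{y-g(t)}$ for $y\neq g(t)$ and $\Phi(g(t))=f'(g(t))$. Since $f$ is differentiable at $g(t)$, $\Phi$ is continuous at $g(t)$, and the identity $f(y)-f(g(t))=\Phi(y)\,(y-g(t))$ holds for \emph{every} $y$ in the range of $g$. Taking $y=g\!\left( t\;{}_{i}H_{\gamma ,\beta ,p}^{\rho ,\delta ,q}(\epsilon t^{-\alpha})\right)$ rewrites the difference quotient above as
\[
\Phi\!\left( g\!\left( t\;{}_{i}H_{\gamma ,\beta ,p}^{\rho ,\delta ,q}\!\left( \epsilon t^{-\alpha }\right)\right)\right)\cdot\frac{\Delta_{\epsilon}}{\epsilon }.
\]
As $\epsilon\to 0$ the first factor converges to $\Phi(g(t))=f'(g(t))$, since $g\!\left( t\;{}_{i}H_{\gamma ,\beta ,p}^{\rho ,\delta ,q}(\epsilon t^{-\alpha})\right)=g(t)+\Delta_{\epsilon}\to g(t)$ and $\Phi$ is continuous at $g(t)$; the second factor is precisely the difference quotient in Definition \ref{def7} for $g$, hence converges to ${}_{i}^{\rho }\mathcal{V}_{\gamma ,\beta ,\alpha }^{\delta ,p,q}g(t)$. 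Therefore the limit exists — so $f\circ g$ is $\alpha$-differentiable — and equals $f'(g(t))\,{}_{i}^{\rho }\mathcal{V}_{\gamma ,\beta ,\alpha }^{\delta ,p,q}g(t)$, which is the assertion.

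The one genuinely delicate point is the factorization of the difference quotient: multiplying and dividing by $\Delta_\epsilon$ directly is illegitimate whenever $\Delta_\epsilon=0$ along some sequence $\epsilon\to 0$, and the auxiliary function $\Phi$ is exactly the device that circumvents this classical obstruction — that is where I expect the argument to need care. The rest is bookkeeping, with $t>0$ held fixed so that $t^{-\alpha}$ and the factor $t^{1-\alpha}$ are meaningful. A shorter but less self-contained alternative is also available: for $t>0$ the scalar $t^{1-\alpha}$ is nonzero, so by Theorem \ref{teo2}(5) one can move between the $\alpha$-derivative and the ordinary derivative at $t$; then the classical chain rule $(f\circ g)'(t)=f'(g(t))g'(t)$, multiplied by $\frac{t^{1-\alpha }\Gamma(\beta)(\rho)_{q}}{\Gamma(\gamma+\beta)(\delta)_{p}}$ and regrouped through Theorem \ref{teo2}(5) once more, yields the stated identity.
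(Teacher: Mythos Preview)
Your argument is correct. The paper itself does not prove this theorem: it is one of the six results recovered in the Preliminaries with the remark that ``the proof can be found in \cite{JEC2},'' so there is no in-paper proof to compare against. Your direct approach from Definition~\ref{def7}, using the classical auxiliary function $\Phi$ to avoid division by a possibly vanishing $\Delta_\epsilon$, is the standard and fully rigorous route; the invocation of Theorem~\ref{teo1} for the continuity of $g$ and the observation that ${}_{i}H_{\gamma ,\beta ,p}^{\rho ,\delta ,q}(0)=1$ are exactly what is needed. Your closing alternative via Theorem~\ref{teo2}(5) is a useful shortcut but, as you implicitly note, it presupposes ordinary differentiability of $g$, which the hypotheses do not grant; the primary argument is therefore the one that actually proves the stated theorem.
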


We present the definition of $\mathcal{V}$-fractional integral and some important theorems that are important for the development of the article.

\begin{definition}\label{def9} {\rm($\mathcal{V}$-fractional integral)} Let $a\geq 0$ and $t\geq a$. Also, let $f$ be a function defined on $(a,t]$ and $0<\alpha<1$. Then, the $\mathcal{V}$-fractional integral of $f$ of order $\alpha$ is defined by
\begin{equation}\label{A15}
_{a}^{\rho }\mathcal{I}_{\gamma ,\beta ,\alpha }^{\delta ,p,q}f\left( t\right) :=\frac{\Gamma \left( \gamma +\beta \right) \left( \delta \right) _{p}}{\Gamma\left( \beta \right) \left( \rho \right) _{q}}\int_{a}^{t}\frac{f\left(x\right) }{x^{1-\alpha }}dx,
\end{equation}
with $\gamma ,\beta ,\rho ,\delta \in \mathbb{C}$ and $p,q>0$ such that ${Re}\left( \gamma \right) >0$, ${Re}\left( \beta \right) >0$, ${Re}\left( \rho \right) >0$, ${Re}\left( \delta \right) >0$ and ${Re}\left( \gamma \right) +p\geq q$.
\end{definition}

\begin{remark}\label{fe} In order to simplify notation, in this work, the $\mathcal{V}$-fractional integral of order $\alpha$, is denoted by
\begin{equation*}
\frac{\Gamma \left( \gamma +\beta \right) \left( \delta \right) _{p}}{\Gamma
\left( \beta \right) \left( \rho \right) _{q}}\int_{a}^{b}\frac{f\left(
t\right) }{t^{1-\alpha }}dt=\int_{a}^{b}f\left( t\right) d_{\omega }t
\end{equation*}
where, $d_{\omega }t=\displaystyle\frac{\Gamma \left( \gamma +\beta \right) \left( \delta \right)_{p}}{\Gamma \left( \beta \right) \left( \rho \right) _{q}}t^{\alpha -1}dt$.
\end{remark}

\begin{theorem}\label{teo5} {\rm(Reverse)} Let $a\geq 0$, $t\geq a$ and $0<\alpha<1$. Also, let $f$ be a continuous function such that $_{a}^{\rho }\mathcal{I}_{\gamma ,\beta ,\alpha }^{\delta ,p,q}f\left( t\right)$ exist. Then
\begin{equation*}
_{i}^{\rho }\mathcal{V}_{\gamma ,\beta ,\alpha }^{\delta ,p,q}\left(
_{a}^{\rho }\mathcal{I}_{\gamma ,\beta ,\alpha }^{\delta ,p,q}f\left(
t\right) \right) =f\left( t\right) ,
\end{equation*}
with $\gamma ,\beta ,\rho ,\delta \in \mathbb{C}$ and $p,q>0$ such that ${Re}\left( \gamma \right) >0$, ${Re}\left( \beta \right) >0$, ${Re}\left( \rho \right) >0$, ${Re}\left( \delta \right) >0$ and ${Re}\left( \gamma \right) +p\geq q$.
\end{theorem}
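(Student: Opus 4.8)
The plan is to reduce the statement to the first fundamental theorem of calculus together with item (5) of Theorem \ref{teo2}. First I would set
\[
F(t):={}_{a}^{\rho }\mathcal{I}_{\gamma ,\beta ,\alpha }^{\delta ,p,q}f\left( t\right)=\frac{\Gamma \left( \gamma +\beta \right) \left( \delta \right) _{p}}{\Gamma\left( \beta \right) \left( \rho \right) _{q}}\int_{a}^{t}\frac{f\left(x\right) }{x^{1-\alpha }}\,dx,
\]
and observe that, since $f$ is continuous on $(a,t]$ and $x\mapsto x^{\alpha-1}$ is continuous for $x>0$, the integrand $x\mapsto f(x)/x^{1-\alpha}$ is continuous; hence by the classical fundamental theorem of calculus $F$ is differentiable for $t>a$ with
\[
\frac{dF(t)}{dt}=\frac{\Gamma \left( \gamma +\beta \right) \left( \delta \right) _{p}}{\Gamma\left( \beta \right) \left( \rho \right) _{q}}\,\frac{f(t)}{t^{1-\alpha }}.
\]

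Next, since $F$ is differentiable, I would apply item (5) of Theorem \ref{teo2}, which expresses the truncated $\mathcal{V}$-fractional derivative of a differentiable function in terms of its ordinary derivative:
\[
{}_{i}^{\rho }\mathcal{V}_{\gamma ,\beta ,\alpha }^{\delta ,p,q}F\left( t\right)=\frac{t^{1-\alpha }\Gamma \left( \beta \right) \left( \rho \right) _{q}}{\Gamma\left( \gamma +\beta \right) \left( \delta \right) _{p}}\,\frac{dF(t)}{dt}.
\]
Substituting the expression for $dF/dt$ obtained above, the prefactors $\dfrac{t^{1-\alpha }\Gamma \left( \beta \right) \left( \rho \right) _{q}}{\Gamma\left( \gamma +\beta \right) \left( \delta \right) _{p}}$ and $\dfrac{\Gamma \left( \gamma +\beta \right) \left( \delta \right) _{p}}{\Gamma\left( \beta \right) \left( \rho \right) _{q}}\cdot t^{\alpha-1}$ cancel exactly, leaving ${}_{i}^{\rho }\mathcal{V}_{\gamma ,\beta ,\alpha }^{\delta ,p,q}F(t)=f(t)$, which is the claim.

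There is essentially no hard step here; the only points requiring care are (i) confirming that the hypothesis ``$f$ continuous and ${}_{a}^{\rho }\mathcal{I}_{\gamma ,\beta ,\alpha }^{\delta ,p,q}f(t)$ exists'' is enough to invoke the fundamental theorem of calculus at interior points $t>a$ (continuity of the integrand on a neighbourhood of $t$ suffices), and (ii) checking that all the $\Gamma$- and Pochhammer-factors, which are nonzero under the stated conditions ${Re}(\gamma)>0$, ${Re}(\beta)>0$, ${Re}(\rho)>0$, ${Re}(\delta)>0$, really do cancel so that no spurious constant survives. I would also note in passing that the behaviour at the left endpoint $t=a$ follows from the remark after Definition \ref{def7} on the value of the derivative at $0$ by a one-sided limit, if one wants the identity to extend there.
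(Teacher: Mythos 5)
Your argument is correct: computing $\frac{d}{dt}{}_{a}^{\rho }\mathcal{I}_{\gamma ,\beta ,\alpha }^{\delta ,p,q}f(t)$ by the classical fundamental theorem of calculus and then invoking item (5) of Theorem~\ref{teo2} makes the prefactors cancel exactly, and your side remarks about the interior points $t>a$ and the left endpoint are the right ones to make. Note that this paper does not actually prove Theorem~\ref{teo5} --- it is quoted without proof from \cite{JEC2} --- but the proof given there is essentially this same reduction, so your proposal matches the intended argument.
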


\begin{theorem}\label{teo6} {\rm(Fundamental Theorem of Calculus)} Let $f:(a,b)\rightarrow\mathbb{R}$ be a differentiable function and $0<\alpha\leq 1$. Then, we have
\begin{equation}\label{A16}
_{a}^{\rho }\mathcal{I}_{\gamma ,\beta ,\alpha }^{\delta ,p,q}\left(
_{i}^{\rho }\mathcal{V}_{\gamma ,\beta ,\alpha }^{\delta ,p,q}f\left(
t\right) \right) =f\left( t\right) -f\left( a\right), \qquad t>a ,
\end{equation}
with $\gamma ,\beta ,\rho ,\delta \in \mathbb{C}$ and $p,q>0$ such that ${Re}\left( \gamma \right) >0$, ${Re}\left( \beta \right) >0$, ${Re}\left( \rho \right) >0$, ${Re}\left( \delta \right) >0$ and ${Re}\left( \gamma \right) +p\geq q$.
\end{theorem}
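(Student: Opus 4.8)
The plan is to prove the Fundamental Theorem of Calculus (Theorem~\ref{teo6}) by reducing the $\mathcal{V}$-fractional operators to their classical counterparts via item~(5) of Theorem~\ref{teo2}, and then invoking the ordinary Fundamental Theorem of Calculus. The key observation is that both the $\mathcal{V}$-fractional derivative and the $\mathcal{V}$-fractional integral carry the \emph{same} structural constant $C:=\frac{\Gamma(\gamma+\beta)(\delta)_p}{\Gamma(\beta)(\rho)_q}$ (appearing as $C$ in the integral and as $C^{-1}$ in the derivative), so these factors must cancel; the content of the theorem is really the classical statement dressed in a weight.

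First I would apply item~(5) of Theorem~\ref{teo2} to the differentiable function $f$, which gives
\begin{equation*}
_{i}^{\rho }\mathcal{V}_{\gamma ,\beta ,\alpha }^{\delta ,p,q}f\left( t\right) =\frac{t^{1-\alpha }}{C}\,\frac{df(t)}{dt},
\qquad
C=\frac{\Gamma(\gamma+\beta)(\delta)_p}{\Gamma(\beta)(\rho)_q}.
\end{equation*}
Next I would substitute this expression into the definition of the $\mathcal{V}$-fractional integral, Eq.~(\ref{A15}), with integrand $g(x)={}_{i}^{\rho }\mathcal{V}_{\gamma ,\beta ,\alpha }^{\delta ,p,q}f(x)$:
\begin{equation*}
_{a}^{\rho }\mathcal{I}_{\gamma ,\beta ,\alpha }^{\delta ,p,q}\bigl({}_{i}^{\rho }\mathcal{V}_{\gamma ,\beta ,\alpha }^{\delta ,p,q}f(t)\bigr)
= C\int_{a}^{t}\frac{1}{x^{1-\alpha}}\cdot\frac{x^{1-\alpha}}{C}\,\frac{df(x)}{dx}\,dx
= \int_{a}^{t}\frac{df(x)}{dx}\,dx.
\end{equation*}
The weights $x^{1-\alpha}$ and $x^{\alpha-1}$ cancel exactly, as does the constant $C$, leaving a classical integral of the derivative of $f$.

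Finally I would close the argument by the classical Fundamental Theorem of Calculus: since $f$ is differentiable on $(a,b)$ (and, as needed, with $df/dx$ integrable on $[a,t]$, which is implicit in the hypotheses), $\int_a^t \frac{df(x)}{dx}\,dx = f(t)-f(a)$ for all $t>a$, which is exactly Eq.~(\ref{A16}). I do not anticipate a genuine obstacle here; the only point requiring mild care is the behavior of the weight $x^{\alpha-1}$ near $x=0$ when $a=0$, but since the product $x^{1-\alpha}\cdot x^{\alpha-1}=1$ the integrand is simply $f'(x)$ with no singularity introduced, so the reduction is valid on the stated domain. The proof is therefore essentially a two-line computation once item~(5) of Theorem~\ref{teo2} is in hand.
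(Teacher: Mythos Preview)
Your argument is correct: item~(5) of Theorem~\ref{teo2} reduces the $\mathcal{V}$-fractional derivative to $C^{-1}t^{1-\alpha}f'(t)$, the weight and the constant cancel against those in Definition~\ref{def9}, and the classical Fundamental Theorem of Calculus finishes the job. Note, however, that the paper does not actually prove Theorem~\ref{teo6} in the text; it is one of the six results in Section~2 that are quoted from \cite{JEC2} without proof, so there is no in-paper argument to compare against. Your computation is the natural (and essentially unique) one, and is almost certainly what appears in the cited reference.
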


\begin{theorem}\label{teo7} Let $\gamma ,\beta ,\rho ,\delta \in \mathbb{C}$ and $p,q>0$ such that ${Re}\left( \gamma \right) >0$, ${Re}\left( \beta \right) >0$, ${Re}\left( \rho \right) >0$, ${Re}\left( \delta \right) >0$ and ${Re}\left( \gamma \right) +p\geq q$ and $f, g:[a,b]\rightarrow\mathbb{R}$ differentiable functions and $0<\alpha<1$. Then, we have
\begin{equation*}
\int_{a}^{b}f\left( x\right) \, _{i}^{\rho }\mathcal{V}_{\gamma ,\beta,\alpha }^{\delta ,p,q}g\left( x\right)  d_{\omega }x=f\left(x\right) g\left( x\right) \mid _{a}^{b}-\int_{a}^{b}g\left( x\right) \,_{i}^{\rho }\mathcal{V}_{\gamma ,\beta ,\alpha }^{\delta ,p,q}f\left(x\right) d_{\omega }x,
\end{equation*}
with $d_{\omega }x=\displaystyle\frac{\Gamma \left( \gamma +\beta \right) \left( \delta
\right) _{p}}{\Gamma \left( \beta \right) \left( \rho \right) _{q}}\displaystyle\frac{dx}{
x^{1-\alpha }}$.
\end{theorem}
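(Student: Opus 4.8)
The plan is to derive the formula from the $\mathcal{V}$-fractional Leibniz rule (item~2 of Theorem~\ref{teo2}) together with the Fundamental Theorem of Calculus (Theorem~\ref{teo6}), mirroring the classical derivation of integration by parts from the product rule.

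First I would note that, since $f$ and $g$ are differentiable on $[a,b]$, the product $f\cdot g$ is differentiable there, hence by item~5 of Theorem~\ref{teo2} it is $\alpha$-differentiable on $(a,b)$ and the Leibniz rule yields, for every $x\in(a,b)$,
\[
{}_{i}^{\rho }\mathcal{V}_{\gamma ,\beta ,\alpha }^{\delta ,p,q}\left( f\cdot g\right)\left( x\right)
= f\left( x\right)\,{}_{i}^{\rho }\mathcal{V}_{\gamma ,\beta ,\alpha }^{\delta ,p,q}g\left( x\right)
+ g\left( x\right)\,{}_{i}^{\rho }\mathcal{V}_{\gamma ,\beta ,\alpha }^{\delta ,p,q}f\left( x\right).
\]
Next I would integrate this identity over $[a,b]$ against $d_\omega x$ (Remark~\ref{fe}); by linearity of the integral,
\[
\int_{a}^{b}{}_{i}^{\rho }\mathcal{V}_{\gamma ,\beta ,\alpha }^{\delta ,p,q}\left( f\cdot g\right)\left( x\right) d_\omega x
= \int_{a}^{b} f\left( x\right)\,{}_{i}^{\rho }\mathcal{V}_{\gamma ,\beta ,\alpha }^{\delta ,p,q}g\left( x\right) d_\omega x
+ \int_{a}^{b} g\left( x\right)\,{}_{i}^{\rho }\mathcal{V}_{\gamma ,\beta ,\alpha }^{\delta ,p,q}f\left( x\right) d_\omega x.
\]
Finally, unwinding the notation of Remark~\ref{fe} one has $\int_a^b h(x)\,d_\omega x = {}_{a}^{\rho }\mathcal{I}_{\gamma ,\beta ,\alpha }^{\delta ,p,q}h(t)\big|_{t=b}$, so applying Theorem~\ref{teo6} to the differentiable function $f\cdot g$ gives
\[
\int_{a}^{b}{}_{i}^{\rho }\mathcal{V}_{\gamma ,\beta ,\alpha }^{\delta ,p,q}\left( f\cdot g\right)\left( x\right) d_\omega x
= f\left( b\right) g\left( b\right) - f\left( a\right) g\left( a\right)
= f\left( x\right) g\left( x\right)\mid_{a}^{b}.
\]
Substituting this into the previous display and transposing one term to the other side yields exactly the claimed identity.

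Since the argument is essentially bookkeeping, I do not anticipate a genuine obstacle; the only points needing care are checking that $f\cdot g$ satisfies the hypotheses of Theorem~\ref{teo6} (which follows from differentiability of $f$ and $g$) and passing cleanly between the $d_\omega x$ notation and the operator ${}_{a}^{\rho }\mathcal{I}_{\gamma ,\beta ,\alpha }^{\delta ,p,q}$, ensuring along the way the continuity that makes all three integrals well defined. As an independent check, I would observe that item~5 of Theorem~\ref{teo2} lets one rewrite $f(x)\,{}_{i}^{\rho }\mathcal{V}_{\gamma ,\beta ,\alpha }^{\delta ,p,q}g(x)\,d_\omega x = f(x)g'(x)\,dx$ — the normalizing constants and the factor $x^{1-\alpha}$ cancel — so the statement reduces to the classical formula $\int_a^b f g'\,dx = fg\mid_a^b - \int_a^b g f'\,dx$, which one then rewrites back in $\mathcal{V}$-fractional notation.
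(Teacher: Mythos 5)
Your proposal is correct. Note, however, that the paper does not actually prove Theorem~\ref{teo7}: it is one of the six preliminary results stated without proof and attributed to \cite{JEC2}. Your derivation --- apply the Leibniz rule (item~2 of Theorem~\ref{teo2}) to $f\cdot g$, integrate against $d_{\omega}x$, and evaluate the left-hand side via the Fundamental Theorem of Calculus (Theorem~\ref{teo6}) --- is the standard argument one would expect in that reference, and your closing observation that item~5 of Theorem~\ref{teo2} collapses the whole statement to classical integration by parts (the normalizing constants and the factor $x^{1-\alpha}$ cancel exactly against $d_{\omega}x$) is a clean independent confirmation. The only points you pass over are endpoint technicalities the paper itself ignores (Theorem~\ref{teo6} is stated on the open interval, and the operator is only defined for $x>0$, which matters if $a=0$); these do not affect the substance of the argument.
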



\section{Truncated $\mathcal{V}$-fractional Taylor's theorem}

We present the Cauchy's function and truncated $\mathcal{V}$-fractional differential equation. In this sense, we discuss and prove the truncated $\mathcal{V}$-fractional variations of constants theorem and truncated $\mathcal{V}$-fractional Taylor's formula.

Let $r_{j}:[0,\infty)\rightarrow \mathbb{R}$, $1\leq j\leq n$, $n\in\mathbb{N}_{0}$ continuous functions and consider the higher-order linear truncated $\mathcal{V}$-fractional differential equation:  
\begin{equation}\label{F1}
Ly=0,\qquad \text{ where }\qquad Ly=\, _{i}^{\rho }\mathcal{V}_{\gamma ,\beta ,\alpha }^{\delta ,p,q;n}y +\underset{j=1}{\overset{n}{\sum }}r_{j}\, _{i}^{\rho }\mathcal{V}_{\gamma ,\beta ,\alpha }^{\delta ,p,q;n-j}y,
\end{equation}
where $ _{i}^{\rho }\mathcal{V}_{\gamma ,\beta ,\alpha }^{\delta ,p,q;n}y=\,_{i}^{\rho }\mathcal{V}_{\gamma ,\beta ,\alpha }^{\delta ,p,q;n-1}\left( _{i}^{\rho }\mathcal{V}_{\gamma ,\beta ,\alpha }^{\delta ,p,q}y\right)$, $\gamma ,\beta ,\rho ,\delta \in \mathbb{C}$ and $p,q>0$ such that ${Re}\left( \gamma \right) >0$, ${Re}\left( \beta \right) >0$, ${Re}\left( \rho \right) >0$, ${Re}\left( \delta \right) >0$ and ${Re}\left( \gamma \right) +p\geq q$.

Note that, $ _{i}^{\rho }\mathcal{V}_{\gamma ,\beta ,\alpha }^{\delta ,p,q;n}y $ is a continuous function in $[0,\infty)$, because the function $y:[0,\infty)\rightarrow \mathbb{R}$ is a solution of the Eq.(\ref{F1}) in $[0,\infty)$ knowing that $y$ is $n$ times $\alpha$-fractional differentiable in $[0,\infty)$ and satisfies $Ly(t)=0$, $t \in[0,\infty)$. 

Let $f:[0,\infty)\rightarrow \mathbb{R}$ continuous functions and consider the non homogeneous equation
\begin{equation}\label{F2}
_{i}^{\rho }\mathcal{V}_{\gamma ,\beta ,\alpha }^{\delta
,p,q;n}y\left( t\right) +\underset{j=1}{\overset{n}{\sum }}r_{j}\, _{i}^{\rho }\mathcal{V}_{\gamma ,\beta ,\alpha }^{\delta,p,q;n-j}y\left( t\right) =f\left( t\right) .
\end{equation}

\begin{definition} We define the Cauchy's function, $y:\left[ 0,\infty \right) \times \left[ 0,\infty \right)\rightarrow \mathbb{R}$ for the fractional linear equation {\rm Eq.(\ref{F1})} to be, for each fixed $s\in[0,\infty)$, the solution of the initial value problem
\begin{equation}
Ly=0,\text{ }\, _{i}^{\rho }\mathcal{V}_{\gamma ,\beta ,\alpha }^{\delta ,p,q;j}y\left( s,s\right) =0,\text{ }0\leq j\leq n-2,\text{ }\, _{i}^{\rho }\mathcal{V}_{\gamma ,\beta ,\alpha }^{\delta ,p,q;n-1}y\left( s,s\right)=1,
\end{equation}
where $\gamma ,\beta ,\rho ,\delta \in \mathbb{C}$ and $p,q>0$ such that ${Re}\left( \gamma \right) >0$, ${Re}\left( \beta \right) >0$, ${Re}\left( \rho \right) >0$, ${Re}\left( \delta \right) >0$ and ${Re}\left( \gamma \right) +p\geq q$.
\end{definition}

Note that, $ _{i}^{\rho }\mathcal{V}_{\gamma ,\beta ,\alpha }^{\delta ,p,q} y=0$, where
\begin{equation}\label{F3}
y\left( t,s\right) :=\frac{1}{\left( n-1\right) !}\left( \frac{\Gamma \left(\gamma +\beta \right) \left( \delta \right) _{p}}{\Gamma \left( \beta\right) \left( \rho \right) _{q}}\frac{t^{\alpha }-s^{\alpha }}{\alpha }\right) ^{n-1}
\end{equation}
is the so-called Cauchy's function and can be easily verified using item 6 of the {\rm Theorem \ref{teo2}}.

\begin{theorem}\label{teo7} {\rm (Truncated $\mathcal{V}$-fractional variation of constants)} Let $0<\alpha\leq 1$ and $s,t\in[0,\infty)$. If $f$ is a continuous function, then the solution of the initial value problem
\begin{equation}\label{eq1}
Ly=f\left( t\right),\qquad \ _{i}^{\rho }\mathcal{V}_{\gamma ,\beta ,\alpha}^{\delta ,p,q;j}y\left( s\right) =0,\qquad 0\leq j\leq n-1,
\end{equation}
is given by
\begin{equation}\label{F4}
y\left( t\right) =\int_{s}^{t}y\left( t,\tau \right) f\left( \tau \right)d_{\omega }\tau ,
\end{equation}
where $y(t,\tau)$ is the Cauchy's function for the {\rm Eq.$(\ref{F1})$} and with $\gamma ,\beta ,\rho ,\delta \in \mathbb{C}$ and $p,q>0$ such that ${Re}\left( \gamma \right) >0$, ${Re}\left( \beta \right) >0$, ${Re}\left( \rho \right) >0$, ${Re}\left( \delta \right) >0$ and ${Re}\left( \gamma \right) +p\geq q$.
\end{theorem}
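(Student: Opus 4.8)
The plan is to follow the classical variation-of-parameters argument, using item 5 of Theorem~\ref{teo2} to trade the truncated $\mathcal{V}$-fractional derivative for an ordinary one wherever the functions involved are differentiable. Throughout, write $V$ for the operator ${}_{i}^{\rho}\mathcal{V}_{\gamma,\beta,\alpha}^{\delta,p,q}$, write $V^{;k}$ for its $k$-fold iterate, and set $C=\dfrac{\Gamma(\gamma+\beta)(\delta)_{p}}{\Gamma(\beta)(\rho)_{q}}$, so that $d_{\omega}\tau=C\,\tau^{\alpha-1}d\tau$ and, for differentiable $h$, $Vh(t)=C^{-1}t^{1-\alpha}h'(t)$ for $t>0$. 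The first step is to prove the fractional Leibniz rule
\begin{equation*}
V\!\left(\int_{s}^{t}g(t,\tau)\,d_{\omega}\tau\right)=g(t,t)+\int_{s}^{t}V g(t,\tau)\,d_{\omega}\tau ,
\end{equation*}
valid whenever $g$ is continuous and differentiable in its first slot, the $V$ on the right acting on that slot. This is the classical rule for differentiating under the integral sign, $\dfrac{d}{dt}\int_{s}^{t}g(t,\tau)\,\tau^{\alpha-1}d\tau=g(t,t)t^{\alpha-1}+\int_{s}^{t}\partial_{t}g(t,\tau)\,\tau^{\alpha-1}d\tau$, multiplied by $C^{-1}t^{1-\alpha}$: the constants and the product $t^{\alpha-1}t^{1-\alpha}$ cancel, leaving $g(t,t)$ in the boundary term.

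Next I would iterate this identity on $y(t)=\int_{s}^{t}y(t,\tau)f(\tau)\,d_{\omega}\tau$, using the defining properties of the Cauchy function, $V^{;j}y(t,t)=0$ for $0\le j\le n-2$ and $V^{;n-1}y(t,t)=1$ (the iterates taken in the first slot). By induction on the order of differentiation, every boundary term vanishes as long as we have differentiated fewer than $n$ times, so $V^{;j}y(t)=\int_{s}^{t}V^{;j}y(t,\tau)f(\tau)\,d_{\omega}\tau$ for $0\le j\le n-1$; the $n$-th application produces the surviving boundary term $V^{;n-1}y(t,t)f(t)=f(t)$, whence $V^{;n}y(t)=f(t)+\int_{s}^{t}V^{;n}y(t,\tau)f(\tau)\,d_{\omega}\tau$. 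Substituting into $Ly=V^{;n}y+\sum_{j=1}^{n}r_{j}V^{;n-j}y$ and collecting the integrals, the integrand becomes $\big(V^{;n}y(t,\tau)+\sum_{j=1}^{n}r_{j}(t)V^{;n-j}y(t,\tau)\big)f(\tau)=(Ly)(t,\tau)f(\tau)$, which vanishes because for each fixed $\tau$ the function $t\mapsto y(t,\tau)$ solves the homogeneous equation $Ly=0$. Hence $Ly(t)=f(t)$. Finally, evaluating the formulas $V^{;j}y(t)=\int_{s}^{t}V^{;j}y(t,\tau)f(\tau)\,d_{\omega}\tau$, $0\le j\le n-1$, at $t=s$ gives an empty integral, so $V^{;j}y(s)=0$ for $0\le j\le n-1$, exactly the prescribed initial conditions.

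I expect the main obstacle to be the rigorous justification of the repeated differentiation under the integral sign: this requires the Cauchy function $y(t,\tau)$ together with its first-slot derivatives up to order $n$ to be jointly continuous on $\{(t,\tau):0\le s\le\tau\le t\}$, which follows from the explicit expression (\ref{F3}) in the constant-coefficient model case and, in general, from standard existence-uniqueness theory for the associated first-order linear system; continuity of $f$ then makes each integrand continuous so that the classical Leibniz rule applies at every stage. A secondary technical point is the degeneracy of the weights $t^{\alpha-1}$ and $t^{1-\alpha}$ as $t\to0^{+}$, for which one falls back on the extension of the derivative at $0$ recorded after Definition~\ref{def7}. Beyond these analytic points, the content of the proof is the bookkeeping of which boundary terms survive at each of the $n$ differentiation steps, which is cleanest to present by induction on the differentiation order.
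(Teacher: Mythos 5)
Your proposal follows essentially the same route as the paper: apply the iterated operator to the candidate $y(t)=\int_{s}^{t}y(t,\tau)f(\tau)\,d_{\omega}\tau$, use the diagonal values of the Cauchy function to kill the boundary terms for $j\le n-1$ and to produce $f(t)$ at $j=n$, then invoke $Ly(\cdot,\tau)=0$ and evaluate at $t=s$ for the initial conditions. The only difference is that you explicitly derive and justify the fractional Leibniz rule for differentiating under the integral sign (which the paper simply asserts in its first display), so your write-up is the same argument carried out with more care.
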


\begin{proof} Applying the derivative operator $_{i}^{\rho }\mathcal{V}_{\gamma ,\beta ,\alpha}^{\delta ,p,q;j}y(\cdot)$ on both sides of {\rm Eq.(\ref{F4})} and using the properties of Cauchy's function, we have
\begin{eqnarray}\label{F5}
_{i}^{\rho }\mathcal{V}_{\gamma ,\beta ,\alpha }^{\delta ,p,q;j}y\left(t\right)  &=&_{i}^{\rho }\mathcal{V}_{\gamma ,\beta ,\alpha }^{\delta,p,q;j}\,\int_{s}^{t}y\left( t,\tau \right) f\left( \tau \right)d_{\omega }\tau    \notag \\
&=&\int_{s}^{t}\, _{i}^{\rho }\mathcal{V}_{\gamma ,\beta ,\alpha}^{\delta ,p,q;j} y\left( t,\tau \right) f\left( \tau \right)d_{\omega }\tau +\,_{i}^{\rho }\mathcal{V}_{\gamma ,\beta ,\alpha}^{\delta ,p,q;j-1} y\left( t,t\right) f\left( t\right)   \notag \\
&=&\int_{s}^{t}\,_{i}^{\rho }\mathcal{V}_{\gamma ,\beta ,\alpha}^{\delta ,p,q;j} y\left( t,\tau \right) f\left( \tau \right)d_{\omega }\tau,
\end{eqnarray}
for $0\leq j\leq n-1$.

On the other hand, for $j=n$, we get
\begin{eqnarray}\label{F6}
_{i}^{\rho }\mathcal{V}_{\gamma ,\beta ,\alpha }^{\delta ,p,q;n}y\left(t\right)  &=&\int_{s}^{t}\, _{i}^{\rho }\mathcal{V}_{\gamma ,\beta,\alpha }^{\delta ,p,q;n} y\left( t,\tau \right) f\left( \tau \right)d_{\omega }\tau +\, _{i}^{\rho }\mathcal{V}_{\gamma ,\beta ,\alpha }^{\delta ,p,q;n-1} y\left( t,t\right) f\left( t\right)   \notag \\
&=&\int_{s}^{t}\,_{i}^{\rho }\mathcal{V}_{\gamma ,\beta ,\alpha}^{\delta ,p,q;j} y\left( t,\tau \right) f\left( \tau \right)d_{\omega }\tau +f\left( t\right),
\end{eqnarray}
because $_{i}^{\rho }\mathcal{V}_{\gamma ,\beta ,\alpha }^{\delta,p,q;n-1} y\left( t,t\right) =1$.

So, from {\rm Eq.(\ref{F5})} and {\rm Eq.(\ref{F6})}, we have
\begin{equation*}
 _{i}^{\rho }\mathcal{V}_{\gamma ,\beta ,\alpha }^{\delta
,p,q;j} y\left( t\right) =0,\ \bigskip 0\leq j\leq n-1
\end{equation*}
and
\begin{eqnarray*}
Ly\left( t\right)  &=&\,_{i}^{\rho }\mathcal{V}_{\gamma ,\beta ,\alpha
}^{\delta ,p,q;j} y\left( t\right) +\underset{j=1}{\overset{n}{\sum }}%
r_{j}\, _{i}^{\rho }\mathcal{V}_{\gamma ,\beta ,\alpha }^{\delta
,p,q;n-j} y\left( t\right)   \notag \\
&=&\int_{s}^{t}\, _{i}^{\rho }\mathcal{V}_{\gamma ,\beta ,\alpha
}^{\delta ,p,q;n} y\left( t,\tau \right) f\left( \tau \right)
d_{\omega }\tau +f\left( t\right) +\underset{j=1}{\overset{n}{\sum }}%
\int_{s}^{t}r_{j}\, _{i}^{\rho }\mathcal{V}_{\gamma ,\beta ,\alpha
}^{\delta ,p,q;n-j} y\left( t,\tau \right) f\left( \tau \right)
d_{\omega }\tau   \notag \\
&=&\int_{s}^{t}\left( \, _{i}^{\rho }\mathcal{V}_{\gamma ,\beta ,\alpha
}^{\delta ,p,q;n} y\left( t,\tau \right) +\underset{j=1}{\overset{n}{%
\sum }}r_{j}\, _{i}^{\rho }\mathcal{V}_{\gamma ,\beta ,\alpha }^{\delta
,p,q;n-j} y\left( t,\tau \right) \right) f\left( \tau \right)
d_{\omega }\tau +f\left( t\right)   \notag \\
&=&\int_{s}^{t}Ly\left( t,\tau \right) f\left( \tau \right) d_{\omega }\tau
+f\left( t\right) =f\left( t\right) .
\end{eqnarray*}

Therefore, we conclude $Ly(t)$=f(t).
\end{proof}

\begin{remark} 
\begin{enumerate}

\item Choosing $\rho=\gamma=\beta=\delta=p=q=1$ and applying the limit $i\rightarrow 0$ at {\rm Eq.(\ref{eq1})}, then the {\rm Theorem \ref{teo7}} becomes the parameter variation theorem for the conformable fractional derivative {\rm \cite{DRA1}}.

\item Taking $\rho=\gamma=\beta=\delta=p=q=1$ and applying the limit $i\rightarrow \infty$ at {\rm Eq.(\ref{eq1})}, then the {\rm Theorem \ref{teo7}}  becomes the parameter variation theorem for the alternative fractional derivative {\rm \cite{DRA}}.
\end{enumerate}
\end{remark}

\begin{theorem}\label{teo8} {\rm(Truncated $\mathcal{V}$-fractional Taylor's formula)} Let $\alpha\in(0,1]$, $n\in\mathbb{N}$, $p,q>0$, $\gamma,\beta,\delta,\rho\in\mathbb{C}$, with $Re(\gamma)>0$, $Re(\beta)>0$, $Re(\delta)>0$, $Re(\rho)>0$ and $Re(\gamma+p)>q$. Suppose $f$ is $(n+1)$ times $\alpha$-fractional differentiable in $[0,\infty)$ and $s,t\in[0,\infty)$. So, we have
\begin{eqnarray}\label{F01}
f\left( t\right)  &=&\underset{k=0}{\overset{n}{\sum }}\frac{1}{k!}\left( \frac{\Gamma \left( \gamma +\beta \right) \left( \delta \right) _{p}}{\Gamma\left( \beta \right) \left( \rho \right) _{q}}\frac{t^{\alpha }-s^{\alpha }}{\alpha }\right) ^{k}\, _{i}^{\rho }\mathcal{V}_{\gamma ,\beta ,\alpha
}^{\delta ,p,q;k} f\left( s\right) +  \notag \\
&&\frac{1}{n!}\int_{s}^{t}\left( \frac{\Gamma \left( \gamma +\beta \right)\left( \delta \right) _{p}}{\Gamma \left( \beta \right) \left( \rho \right)_{q}}\frac{t^{\alpha }-\tau^{\alpha }}{\alpha }\right) ^{n}\, _{i}^{\rho }\mathcal{V}_{\gamma ,\beta ,\alpha }^{\delta ,p,q;n+1} f\left( \tau\right) d_{\omega }\tau .
\end{eqnarray}
\end{theorem}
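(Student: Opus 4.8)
The plan is to argue by induction on $n$, using the Fundamental Theorem of Calculus ({\rm Theorem \ref{teo6}}) as the base case and the $\mathcal{V}$-fractional integration-by-parts formula ({\rm Theorem \ref{teo7}}) to pass from one level to the next, in the same spirit as the classical derivation of Taylor's formula with integral remainder. Throughout I would fix $t$, write $C=\frac{\Gamma(\gamma+\beta)(\delta)_p}{\Gamma(\beta)(\rho)_q}$, abbreviate the iterated operator ${}_{i}^{\rho }\mathcal{V}_{\gamma ,\beta ,\alpha }^{\delta ,p,q;k}$ by $\mathcal{V}^{k}$, and introduce the remainder kernel $\phi(\tau):=C\,\dfrac{t^{\alpha}-\tau^{\alpha}}{\alpha}$, whose key property is $\phi(t)=0$. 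I would also note that $\int_{s}^{t}g(\tau)\,d_{\omega}\tau$ is exactly ${}_{s}^{\rho }\mathcal{I}_{\gamma ,\beta ,\alpha }^{\delta ,p,q}g(t)$ in the notation of {\rm Definition \ref{def9}} and {\rm Remark \ref{fe}}.

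The single computation on which the induction turns is the following: since $\phi$ is differentiable in $\tau$ with $\phi'(\tau)=-C\tau^{\alpha-1}$, item 5 of {\rm Theorem \ref{teo2}} (equivalently the chain rule, {\rm Theorem \ref{teo3}}) gives, for every $k\ge 0$,
\begin{equation*}
{}_{i}^{\rho }\mathcal{V}_{\gamma ,\beta ,\alpha }^{\delta ,p,q}\!\left(-\frac{\phi(\tau)^{k+1}}{(k+1)!}\right)=\frac{\tau^{1-\alpha}}{C}\cdot\left(-\frac{\phi(\tau)^{k}}{k!}\,\phi'(\tau)\right)=\frac{\phi(\tau)^{k}}{k!},
\end{equation*}
where the left-hand side is read as a function of $\tau$ with $t$ held fixed.

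For $n=0$ the asserted identity collapses to $f(t)=f(s)+{}_{s}^{\rho }\mathcal{I}_{\gamma ,\beta ,\alpha }^{\delta ,p,q}\big(\mathcal{V}^{1}f\big)(t)$, which is {\rm Theorem \ref{teo6}}. For the inductive step, suppose the formula holds at level $n$ with remainder $R_{n}=\frac{1}{n!}\int_{s}^{t}\phi(\tau)^{n}\,\mathcal{V}^{n+1}f(\tau)\,d_{\omega}\tau$. I would apply {\rm Theorem \ref{teo7}} to $R_{n}$, taking $\mathcal{V}^{n+1}f(\tau)$ in the role of the first factor and $-\frac{1}{(n+1)!}\phi(\tau)^{n+1}$ in the role of the function being $\mathcal{V}$-differentiated; by the displayed computation the $\mathcal{V}$-derivative of the latter is $\frac{1}{n!}\phi(\tau)^{n}$, so integration by parts rewrites $R_{n}$ as
\begin{equation*}
R_{n}=\Big[\,-\frac{\phi(\tau)^{n+1}}{(n+1)!}\,\mathcal{V}^{n+1}f(\tau)\,\Big]_{\tau=s}^{\tau=t}+\frac{1}{(n+1)!}\int_{s}^{t}\phi(\tau)^{n+1}\,\mathcal{V}^{n+2}f(\tau)\,d_{\omega}\tau .
\end{equation*}
Since $\phi(t)=0$ the boundary contribution at $\tau=t$ vanishes, and what remains is $R_{n}=\dfrac{\phi(s)^{n+1}}{(n+1)!}\,\mathcal{V}^{n+1}f(s)+R_{n+1}$; substituting this back into the level-$n$ formula produces precisely the level-$(n+1)$ formula, closing the induction.

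I expect the only real difficulties to be bookkeeping rather than conceptual: getting the constant $C$ and the sign coming from $\phi'$ right when differentiating the powers of $\phi$, and checking at each step that $\mathcal{V}^{n+1}f$ meets the differentiability hypothesis of {\rm Theorem \ref{teo7}}. The latter is where the assumption that $f$ is $(n+1)$ times $\alpha$-fractional differentiable is used, together with the fact that for $\tau>0$ the weight $\tau^{1-\alpha}$ in item 5 of {\rm Theorem \ref{teo2}} is nonzero, so $\alpha$-differentiability and ordinary differentiability coincide on $(0,\infty)$; any delicacy at $\tau=0$ is absorbed by the limit convention for ${}_{i}^{\rho }\mathcal{V}_{\gamma ,\beta ,\alpha }^{\delta ,p,q}$ at $0$ noted after {\rm Definition \ref{def7}}.
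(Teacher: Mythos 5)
Your proof is correct, but it follows a genuinely different route from the paper's. The paper proves the identity in one shot for general $n$: it regards $f$ as the solution of the initial value problem ${}_{i}^{\rho }\mathcal{V}_{\gamma ,\beta ,\alpha }^{\delta ,p,q;n+1}x=g$ with $g:={}_{i}^{\rho }\mathcal{V}_{\gamma ,\beta ,\alpha }^{\delta ,p,q;n+1}f$ and data ${}_{i}^{\rho }\mathcal{V}_{\gamma ,\beta ,\alpha }^{\delta ,p,q;k}x(s)={}_{i}^{\rho }\mathcal{V}_{\gamma ,\beta ,\alpha }^{\delta ,p,q;k}f(s)$, invokes the truncated $\mathcal{V}$-fractional variation-of-constants theorem with the Cauchy function $\frac{1}{n!}\bigl(C\,\frac{t^{\alpha}-s^{\alpha}}{\alpha}\bigr)^{n}$ to produce the integral remainder, and then shows that the Taylor polynomial $w(t)=\sum_{k=0}^{n}\frac{1}{k!}\bigl(C\,\frac{t^{\alpha}-s^{\alpha}}{\alpha}\bigr)^{k}\,{}_{i}^{\rho }\mathcal{V}_{\gamma ,\beta ,\alpha }^{\delta ,p,q;k}f(s)$ solves the associated homogeneous problem by computing ${}_{i}^{\rho }\mathcal{V}_{\gamma ,\beta ,\alpha }^{\delta ,p,q;m}w$ explicitly and appealing to uniqueness. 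Your induction, with the Fundamental Theorem of Calculus ({\rm Theorem \ref{teo6}}) as base case and a single integration by parts against the antiderivative $-\phi^{n+1}/(n+1)!$ of the kernel as the inductive step, is the classical integral-remainder derivation transplanted to this setting; your key computation ${}_{i}^{\rho }\mathcal{V}_{\gamma ,\beta ,\alpha }^{\delta ,p,q}\bigl(-\phi^{k+1}/(k+1)!\bigr)=\phi^{k}/k!$ checks out. What your route buys: it bypasses the variation-of-constants machinery and, more importantly, the uniqueness assertion for the $(n+1)$-st order homogeneous equation, which the paper uses but never actually proves; everything you invoke is stated explicitly in the preliminaries. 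What the paper's route buys: it exhibits Taylor's formula as a corollary of the variation-of-constants theorem it has just established, which is the structural point the authors (following Anderson) want to make. Two small items to make explicit in a final write-up: the FTC and integration-by-parts statements are phrased for $t>a$ (resp.\ on $[a,b]$), so the case $t<s$ needs the usual orientation/sign convention; and the identification of $\alpha$-differentiability with ordinary differentiability on $(0,\infty)$, which you need in order to feed ${}_{i}^{\rho }\mathcal{V}_{\gamma ,\beta ,\alpha }^{\delta ,p,q;n+1}f$ into the integration-by-parts theorem, deserves the one-line justification you sketch (the paper silently makes the same identification).
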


\begin{proof} Consider the following function $g\left( t\right) :=\,_{i}^{\rho }\mathcal{V}_{\gamma ,\beta ,\alpha }^{\delta ,p,q;n+1} f\left( t\right) $. So, $f$ solves the initial value problem
\begin{equation*}
\, _{i}^{\rho }\mathcal{V}_{\gamma ,\beta ,\alpha }^{\delta,p,q;n+1} x = g,\qquad \, _{i}^{\rho }\mathcal{V}_{\gamma ,\beta ,\alpha }^{\delta ,p,q;k} x\left( s\right) =\,_{i}^{\rho }\mathcal{V}_{\gamma ,\beta ,\alpha }^{\delta ,p,q;k} f\left( s\right),\qquad 0\leq k\leq n,
\end{equation*}
where the Cauchy's function for $ _{i}^{\rho }\mathcal{V}_{\gamma ,\beta ,\alpha }^{\delta ,p,q;n+1} y=0$ is given by
\begin{equation*}
y\left( t,s\right) =\frac{1}{n!}\left( \frac{\Gamma \left( \gamma +\beta\right) \left( \delta \right) _{p}}{\Gamma \left( \beta \right) \left( \rho\right) _{q}}\frac{t^{\alpha }-s^{\alpha }}{\alpha }\right) ^{n}.
\end{equation*} 

Using the variation of constants formula, i.e, $f(t)=u(t)+y(t)$, with
\begin{equation*}
y\left( t\right) =\int_{s}^{t}y\left( t,s\right) g\left( \tau \right)d_{\omega }\tau ,
\end{equation*}
we have
\begin{equation}
f\left( t\right) =u\left( t\right) +\frac{1}{n!}\int_{s}^{t}\left( \frac{\Gamma \left( \gamma +\beta \right) \left( \delta \right) _{p}}{\Gamma \left( \beta \right) \left( \rho \right) _{q}}\frac{t^{\alpha }-\tau	^{\alpha }}{\alpha }\right) ^{n}g\left( \tau \right) d_{\omega }\tau ,
\end{equation}
where $u$ solves the initial value problem:
\begin{equation}\label{F7}
_{i}^{\rho }\mathcal{V}_{\gamma ,\beta ,\alpha }^{\delta ,p,q;n+1} u=0,\qquad \, _{i}^{\rho }\mathcal{V}_{\gamma ,\beta ,\alpha }^{\delta ,p,q;m} u\left( s\right) = \, _{i}^{\rho }\mathcal{V}_{\gamma ,\beta ,\alpha }^{\delta ,p,q;m} f\left( s\right) \text{, }\qquad 0\leq m\leq n.
\end{equation}

Now, let $u(t)$ the solution of {\rm Eq.(\ref{F7})} given by
\begin{equation*}
u\left( t\right) =\overset{n}{\underset{k=0}{\sum }}\frac{1}{k!}\left( \frac{\Gamma \left( \gamma +\beta \right) \left( \delta \right) _{p}}{\Gamma\left( \beta \right) \left( \rho \right) _{q}}\frac{t^{\alpha }-s^{\alpha }}{\alpha }\right) ^{k}\, _{i}^{\rho }\mathcal{V}_{\gamma ,\beta ,\alpha}^{\delta ,p,q;k} f\left( s\right).
\end{equation*}

Thus, to validate $u(t)$, we consider the following set,
\begin{equation*}
w\left( t\right) :=\overset{n}{\underset{k=0}{\sum }}\frac{1}{k!}\left( \frac{\Gamma \left( \gamma +\beta \right) \left( \delta \right) _{p}}{\Gamma\left( \beta \right) \left( \rho \right) _{q}}\frac{t^{\alpha }-s^{\alpha }}{\alpha }\right) ^{k}\, _{i}^{\rho }\mathcal{V}_{\gamma ,\beta ,\alpha
}^{\delta ,p,q;k} f\left( s\right) .
\end{equation*}

So, $ _{i}^{\rho }\mathcal{V}_{\gamma ,\beta ,\alpha }^{\delta ,p,q;n+1} w=0$ and we have
\begin{equation*}
_{i}^{\rho }\mathcal{V}_{\gamma ,\beta ,\alpha }^{\delta ,p,q;m} w\left( t\right) =\, _{i}^{\rho }\mathcal{V}_{\gamma ,\beta ,\alpha }^{\delta ,p,q;m} \left( \overset{n}{\underset{k=0}{%
\sum }}\frac{1}{k!}\left( \frac{\Gamma \left( \gamma +\beta \right) \left( \delta \right) _{p}}{\Gamma \left( \beta \right) \left( \rho \right) _{q}}\frac{t^{\alpha }-s^{\alpha }}{\alpha }\right) ^{k}\, _{i}^{\rho }\mathcal{V}_{\gamma ,\beta ,\alpha }^{\delta ,p,q;k} f\left( s\right) \right) .
\end{equation*}

In fact, note that for $m=1$, we can write
\begin{eqnarray*}
_{i}^{\rho }\mathcal{V}_{\gamma ,\beta ,\alpha }^{\delta ,p,q;1} w\left( t\right)  &=&\,_{i}^{\rho }\mathcal{V}_{\gamma,\beta ,\alpha }^{\delta ,p,q;1} \left( \overset{n}{\underset{k=0}{\sum }}\frac{1}{k!}\left( \frac{\Gamma \left( \gamma +\beta \right) \left(\delta \right) _{p}}{\Gamma \left( \beta \right) \left( \rho \right) _{q}}\frac{t^{\alpha }-s^{\alpha }}{\alpha }\right) ^{k}\left( _{i}^{\rho }
\mathcal{V}_{\gamma ,\beta ,\alpha }^{\delta ,p,q;k}\right) f\left( s\right)
\right)   \notag \\
&=&\overset{n}{\underset{k=0}{\sum }}\frac{t^{1-\alpha }}{k!}\frac{\Gamma\left( \beta \right) \left( \rho \right) _{q}}{\Gamma \left( \gamma +\beta\right) \left( \delta \right) _{p}}k\left( \frac{\Gamma \left( \gamma +\beta\right) \left( \delta \right) _{p}}{\Gamma \left( \beta \right) \left( \rho
\right) _{q}}\frac{t^{\alpha }-s^{\alpha }}{\alpha }\right) ^{k-1}  \notag \\
&&\frac{\alpha t^{\alpha -1}}{\alpha }\frac{\Gamma \left( \gamma +\beta\right) \left( \delta \right) _{p}}{\Gamma \left( \beta \right) \left( \rho\right) _{q}}\,_{i}^{\rho }\mathcal{V}_{\gamma ,\beta ,\alpha }^{\delta,p,q;k} f\left( s\right)   \notag \\
&=&\overset{n}{\underset{k=0}{\sum }}\frac{k}{k!}\left( \frac{\Gamma \left(\gamma +\beta \right) \left( \delta \right) _{p}}{\Gamma \left( \beta\right) \left( \rho \right) _{q}}\frac{t^{\alpha }-s^{\alpha }}{\alpha }\right) ^{k-1}\, _{i}^{\rho }\mathcal{V}_{\gamma ,\beta ,\alpha}^{\delta ,p,q;k} f\left( s\right), 
\end{eqnarray*}

and for $m=2$, we get
\begin{eqnarray*}
 _{i}^{\rho }\mathcal{V}_{\gamma ,\beta ,\alpha }^{\delta ,p,q;2} w\left( t\right)  &=&\overset{n}{\underset{k=0}{\sum }}\frac{k}{k!}\frac{t^{1-\alpha }\Gamma \left( \beta \right) \left( \rho \right) _{q}}{\Gamma \left( \gamma +\beta \right) \left( \delta \right) _{p}}\left(
k-1\right) \left( \frac{\Gamma \left( \gamma +\beta \right) \left( \delta\right) _{p}}{\Gamma \left( \beta \right) \left( \rho \right) _{q}}\frac{t^{\alpha }-s^{\alpha }}{\alpha }\right) ^{k-2}  \notag \\
&&\frac{\alpha t^{\alpha -1}}{\alpha }\frac{\Gamma \left( \gamma +\beta\right) \left( \delta \right) _{p}}{\Gamma \left( \beta \right) \left( \rho\right) _{q}}\,_{i}^{\rho }\mathcal{V}_{\gamma ,\beta ,\alpha }^{\delta,p,q;k} f\left( s\right)   \notag \\
&=&\overset{n}{\underset{k=0}{\sum }}\frac{k\left( k-1\right) }{k!}\left( \frac{\Gamma \left( \gamma +\beta \right) \left( \delta \right) _{p}}{\Gamma\left( \beta \right) \left( \rho \right) _{q}}\frac{t^{\alpha }-s^{\alpha }}{\alpha }\right) ^{k-2}\,_{i}^{\rho }\mathcal{V}_{\gamma ,\beta ,\alpha
}^{\delta ,p,q;k} f\left( s\right) .
\end{eqnarray*}

For any $m$, we have
\begin{eqnarray*}
_{i}^{\rho }\mathcal{V}_{\gamma ,\beta ,\alpha }^{\delta ,p,q;m} w\left( t\right)  &=&\overset{n}{\underset{k=0}{\sum }}\frac{k\left( k-1\right) \cdots\left( k-\left( m+1\right) \right) }{k!}\frac{t^{1-\alpha }\Gamma \left( \beta \right) \left( \rho \right) _{q}}{\Gamma \left( \gamma +\beta \right) \left( \delta \right) _{p}}\left( \frac{\Gamma\left( \gamma +\beta \right) \left( \delta \right) _{p}}{\Gamma \left( \beta\right) \left( \rho \right) _{q}}\frac{t^{\alpha }-s^{\alpha }}{\alpha }
\right) ^{k-m}  \notag \\
&&\frac{\alpha t^{\alpha -1}}{\alpha }\frac{\Gamma \left( \gamma +\beta\right) \left( \delta \right) _{p}}{\Gamma \left( \beta \right) \left( \rho\right) _{q}}\, _{i}^{\rho }\mathcal{V}_{\gamma ,\beta ,\alpha }^{\delta ,p,q;k} f\left( s\right)   \notag \\
&=&\overset{n}{\underset{k=0}{\sum }}\frac{k\left( k-1\right) \cdots\left(k-\left( m+1\right) \right) }{k!}\left( \frac{\Gamma \left( \gamma +\beta\right) \left( \delta \right) _{p}}{\Gamma \left( \beta \right) \left( \rho\right) _{q}}\frac{t^{\alpha }-s^{\alpha }}{\alpha }\right) ^{k-m}\,
_{i}^{\rho }\mathcal{V}_{\gamma ,\beta ,\alpha }^{\delta ,p,q;k} f\left( s\right)   \notag \\
&=&\overset{n}{\underset{k=m}{\sum }}\frac{1}{\left( k-m\right) !}\left( \frac{\Gamma \left( \gamma +\beta \right) \left( \delta \right) _{p}}{\Gamma\left( \beta \right) \left( \rho \right) _{q}}\frac{t^{\alpha }-s^{\alpha }}{\alpha }\right) ^{k-m}\, _{i}^{\rho }\mathcal{V}_{\gamma ,\beta ,\alpha}^{\delta ,p,q;k} f\left( s\right). 
\end{eqnarray*}

Thus, it follows that
\begin{equation*}
_{i}^{\rho }\mathcal{V}_{\gamma ,\beta ,\alpha }^{\delta ,p,q;m} w\left( s\right) =\overset{n}{\underset{k=m}{\sum }}\frac{1}{\left( k-m\right) !}\left( \frac{\Gamma \left( \gamma +\beta \right) \left(\delta \right) _{p}}{\Gamma \left( \beta \right) \left( \rho \right) _{q}}\frac{s^{\alpha }-s^{\alpha }}{\alpha }\right) ^{k-m}\, _{i}^{\rho }\mathcal{V}_{\gamma ,\beta ,\alpha }^{\delta ,p,q;k} f\left( s\right)=\, _{i}^{\rho }\mathcal{V}_{\gamma ,\beta ,\alpha }^{\delta ,p,q;m} f\left( s\right) ,
\end{equation*}
for $0 \leq m\leq n$.

Consequently, we have that: $w$ solves the {\rm Eq.(\ref{F7})}, and then $u=w$ by the uniqueness, which concludes the proof.
\end{proof}

\begin{remark} 
\begin{enumerate}

\item Choosing $\rho=\gamma=\beta=\delta=p=q=1$ and applying the limit $i\rightarrow 0$ at {\rm Eq.(\ref{F01})}, then {\rm Theorem \ref{teo8}} becomes Taylor's formula for the conformable fractional derivative {\rm \cite{DRA1}}.

\item Taking $\rho=\gamma=\beta=\delta=p=q=1$ and applying the limit $i\rightarrow \infty$ at {\rm Eq.(\ref{F01})}, then {\rm Theorem \ref{teo8}} becomes Taylor's formula for the alternative fractional derivative {\rm \cite{DRA}}.
\end{enumerate}
\end{remark}

\section{Truncated $\mathcal{V}$-fractional Taylor's remainder}

In this section, we present the truncated $\mathcal{V}$-fractional remainder function, as well as discussing and proving the main result of the truncated $\mathcal{V}$-fractional Taylor's remainder theorem.

Using the truncated  $\mathcal{V}$-fractional Taylor's formula, we define the  truncated $\mathcal{V}$-fractional remainder function, given by
\begin{equation*}
R_{-1,f}\left( \cdot ,s\right) :=f\left( s\right) 
\end{equation*}
and for $n>-1$,
\begin{eqnarray}\label{KP}
R_{n,f}\left( t,s\right)  &:&=f\left( s\right) -\overset{n}{\underset{k=0}{\sum }}\frac{\,_{i}^{\rho }\mathcal{V}_{\gamma ,\beta ,\alpha }^{\delta,p,q;k} f\left( t\right) }{k!}\left( \frac{\Gamma \left( \gamma+\beta \right) \left( \delta \right) _{p}}{\Gamma \left( \beta \right)
\left( \rho \right) _{q}}\frac{s^{\alpha }-t^{\alpha }}{\alpha }\right) ^{k}\notag \\
&=&\frac{1}{n!}\int_{t}^{s}\left( \frac{\Gamma \left( \gamma +\beta \right)\left( \delta \right) _{p}}{\Gamma \left( \beta \right) \left( \rho \right)_{q}}\frac{s^{\alpha }-\tau ^{\alpha }}{\alpha }\right) ^{n}\,_{i}^{\rho }\mathcal{V}_{\gamma ,\beta ,\alpha }^{\delta ,p,q;n+1} f\left( \tau\right) d_{\omega }\tau, \notag \\ 
\end{eqnarray}
where $\alpha\in(0,1]$ and $\gamma, \beta,\delta,\rho\in\mathbb{C}$ such that $Re(\gamma)>0$, $Re(\beta)>0$, $Re(\delta)>0$, $Re(\rho)>0$, $Re(\gamma)+p\geq q$ and $f$ is $(n+1)$ times $\alpha$-differentiable on $[0,	\infty)$.

\begin{lemma}\label{le1} Let $\alpha\in(0,1]$ and $\gamma, \beta,\delta,\rho\in\mathbb{C}$ such that $Re(\gamma)>0$, $Re(\beta)>0$, $Re(\delta)>0$, $Re(\rho)>0$ and $Re(\gamma)+p\geq q$. The following identity  involving the truncated $\mathcal{V}$-fractional Taylor's remainder, holds
\begin{eqnarray}\label{eq2}
&&\int_{a}^{b}\frac{\, _{i}^{\rho }\mathcal{V}_{\gamma ,\beta ,\alpha }^{\delta
,p,q;n+1} f\left( s\right) }{\left( n+1\right) !}\left( \frac{\Gamma
\left( \gamma +\beta \right) \left( \delta \right) _{p}}{\Gamma \left( \beta
\right) \left( \rho \right) _{q}}\frac{t^{\alpha }-s^{\alpha }}{\alpha }%
\right) ^{n+1}d_{\omega }s  \notag \\
&=&\int_{a}^{t}R_{n,f}\left( a,s\right) d_{\omega
}s+\int_{t}^{b}R_{n,f}\left( b,s\right) d_{\omega }s.
\end{eqnarray}
\end{lemma}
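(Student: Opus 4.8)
The plan is to reduce the identity to an application of the truncated $\mathcal{V}$-fractional Taylor's remainder formula, {\rm Eq.(\ref{KP})}, by recognizing that both terms on the right-hand side are themselves expressible as single $\mathcal{V}$-fractional integrals against $\,_{i}^{\rho}\mathcal{V}_{\gamma,\beta,\alpha}^{\delta,p,q;n+1}f$. Concretely, first I would write out $R_{n,f}(a,s)$ for $s\in[a,t]$ using the integral form in {\rm Eq.(\ref{KP})}, namely
\begin{equation*}
R_{n,f}\left( a,s\right) =\frac{1}{n!}\int_{a}^{s}\left( \frac{\Gamma \left( \gamma +\beta \right)\left( \delta \right) _{p}}{\Gamma \left( \beta \right) \left( \rho \right)_{q}}\frac{s^{\alpha }-\tau ^{\alpha }}{\alpha }\right) ^{n}\,_{i}^{\rho }\mathcal{V}_{\gamma ,\beta ,\alpha }^{\delta ,p,q;n+1} f\left( \tau\right) d_{\omega }\tau ,
\end{equation*}
and similarly $R_{n,f}(b,s)$ for $s\in[t,b]$. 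Then $\int_{a}^{t}R_{n,f}(a,s)\,d_{\omega}s$ and $\int_{t}^{b}R_{n,f}(b,s)\,d_{\omega}s$ become double $\mathcal{V}$-fractional integrals.

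The heart of the argument is a Fubini-type interchange of the order of integration in each of these double integrals, carried out with respect to the measure $d_{\omega}\tau = \frac{\Gamma(\gamma+\beta)(\delta)_p}{\Gamma(\beta)(\rho)_q}\tau^{\alpha-1}d\tau$; since $\mathcal{V}$-fractional integration is just ordinary Lebesgue integration against a smooth positive weight (see {\rm Definition \ref{def9}} and {\rm Remark \ref{fe}}), the classical Fubini theorem applies to the continuous integrand. After swapping, the inner integral in the first term runs over $s\in[\tau,t]$ and in the second over $s\in[\tau,b]$, and in each case one must evaluate
\begin{equation*}
\int \left( \frac{\Gamma \left( \gamma +\beta \right)\left( \delta \right) _{p}}{\Gamma \left( \beta \right) \left( \rho \right)_{q}}\frac{s^{\alpha }-\tau ^{\alpha }}{\alpha }\right) ^{n} d_{\omega }s .
\end{equation*}
Using item 6 of {\rm Theorem \ref{teo2}} (or equivalently the Fundamental Theorem of Calculus, {\rm Theorem \ref{teo6}}, applied to the function $s\mapsto\big(\cdots\frac{s^\alpha-\tau^\alpha}{\alpha}\big)^{n+1}$, whose truncated $\mathcal{V}$-fractional derivative reproduces the integrand up to the factor $n+1$), this inner integral equals $\frac{1}{n+1}\big(\cdots\frac{s^\alpha-\tau^\alpha}{\alpha}\big)^{n+1}$ evaluated between the limits.

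Carrying out the evaluation, the first term contributes
$\frac{1}{(n+1)!}\int_{a}^{t}\,_{i}^{\rho}\mathcal{V}_{\gamma,\beta,\alpha}^{\delta,p,q;n+1}f(\tau)\big[\big(\cdots\frac{t^\alpha-\tau^\alpha}{\alpha}\big)^{n+1}-0\big]\,d_{\omega}\tau$ — the lower limit $s=\tau$ kills that boundary term — and the second term contributes $\frac{1}{(n+1)!}\int_{t}^{b}\,_{i}^{\rho}\mathcal{V}_{\gamma,\beta,\alpha}^{\delta,p,q;n+1}f(\tau)\big[\big(\cdots\frac{b^\alpha-\tau^\alpha}{\alpha}\big)^{n+1}-\big(\cdots\frac{t^\alpha-\tau^\alpha}{\alpha}\big)^{n+1}\big]\,d_{\omega}\tau$. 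I would then observe that the $b$-term produced here is precisely what appears when the left-hand side integral $\int_a^b$ is split as $\int_a^t + \int_t^b$: on $[t,b]$, $(t^\alpha - s^\alpha)$ is negative but raised to power $n+1$, and matching signs carefully (one factor of $(-1)^{n+1}$ appears from swapping $t^\alpha-s^\alpha$ versus $s^\alpha-\tau^\alpha$ conventions) the leftover $\big(\cdots\frac{t^\alpha-\tau^\alpha}{\alpha}\big)^{n+1}$ pieces from the two regions cancel against each other, leaving exactly $\int_a^b \frac{\,_{i}^{\rho}\mathcal{V}_{\gamma,\beta,\alpha}^{\delta,p,q;n+1}f(s)}{(n+1)!}\big(\cdots\frac{t^\alpha-s^\alpha}{\alpha}\big)^{n+1}d_{\omega}s$.

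The main obstacle I anticipate is purely bookkeeping: keeping the sign conventions straight, since $R_{n,f}(t,s)$ in {\rm Eq.(\ref{KP})} is defined with an integral $\int_t^s$ and with $s^\alpha-\tau^\alpha$ in the kernel, whereas the left-hand side of {\rm Eq.(\ref{eq2})} uses $t^\alpha - s^\alpha$; getting the orientation of each integral and each power of $(-1)^{n+1}$ correct so that the cancellation is exact is the delicate part. There is no analytic difficulty — all integrands are continuous, all integrals are finite, and Fubini is unproblematic — so once the substitution and the elementary integral from item 6 of {\rm Theorem \ref{teo2}} are in place, the identity falls out by collecting terms.
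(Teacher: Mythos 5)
Your strategy is sound and genuinely different from the paper's. The paper proves the identity by induction on $n$: the base case $n=-1$ is immediate, and the inductive step applies the integration by parts theorem for the $\mathcal{V}$-fractional integral to the left-hand side to lower the exponent from $n+1$ to $n$, then invokes the induction hypothesis and absorbs the resulting boundary terms via the recursive (first) line of Eq.(\ref{KP}), i.e.\ $R_{k,f}=R_{k-1,f}-\tfrac{1}{k!}\,_{i}^{\rho }\mathcal{V}_{\gamma ,\beta ,\alpha }^{\delta ,p,q;k}f\,(\cdots)^{k}$. You instead substitute the integral (second) line of Eq.(\ref{KP}) into each term on the right, apply Fubini, and evaluate the inner integral in closed form using the fact that $d_{\omega}s$ is the exact differential of $\frac{\Gamma(\gamma+\beta)(\delta)_p}{\Gamma(\beta)(\rho)_q}\frac{s^{\alpha}-\tau^{\alpha}}{\alpha}$; this is a direct, non-inductive computation that never touches the polynomial part of the remainder, and is arguably cleaner. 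One slip to fix in your sketch: for the second term the inner integral in Eq.(\ref{KP}) is $\int_{b}^{s}=-\int_{s}^{b}$, so the double integral lives on the region $t\le s\le\tau\le b$, and after swapping the inner variable $s$ runs over $[t,\tau]$, not $[\tau,b]$. With the correct limits the inner integral equals $-\frac{1}{n+1}\bigl(\cdots\frac{t^{\alpha}-\tau^{\alpha}}{\alpha}\bigr)^{n+1}$, the two minus signs cancel, and the second term reproduces exactly the $\int_{t}^{b}$ portion of the left-hand side; no $b^{\alpha}$ boundary term arises and no cross-cancellation between the two regions is needed. Since you flagged precisely this orientation bookkeeping as the delicate step, the correction is routine and your proof closes.
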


\begin{proof}
We will perform the proof by means of induction on $n$. Then for $n=-1$
\begin{eqnarray*}
&&\int_{a}^{b}\frac{\, _{i}^{\rho }\mathcal{V}_{\gamma ,\beta ,\alpha }^{\delta ,p,q;0} f\left( s\right) }{0!}\left( \frac{\Gamma \left( \gamma +\beta \right) \left( \delta \right) _{p}}{\Gamma \left( \beta \right) \left( \rho \right) _{q}}\frac{t^{\alpha }-s^{\alpha }}{\alpha }\right) ^{0}d_{\omega }s  \notag \\
&=&\int_{a}^{t}R_{-1,f}\left( a,s\right) d_{\omega
}s+\int_{t}^{b}R_{-1,f}\left( b,s\right) d_{\omega }s  \notag \\
&=&\int_{a}^{t}f\left( s\right) d_{\omega }s+\int_{t}^{b}f\left( s\right)d_{\omega }s.
\end{eqnarray*}

Assuming that it is true for $n=k-1$,
\begin{eqnarray*}
&&\int_{a}^{b}\frac{\, _{i}^{\rho }\mathcal{V}_{\gamma ,\beta ,\alpha }^{\delta ,p,q;k} f\left( s\right) }{k!}\left( \frac{\Gamma \left( \gamma +\beta \right) \left( \delta \right) _{p}}{\Gamma \left( \beta \right) \left( \rho \right) _{q}}\frac{t^{\alpha }-s^{\alpha }}{\alpha }\right) ^{k}d_{\omega }s  \notag \\
&=&\int_{a}^{t}R_{k-1,f}\left( a,s\right) d_{\omega
}s+\int_{t}^{b}R_{k-1,f}\left( b,s\right) d_{\omega }s.
\end{eqnarray*}

Thus, taking $n=k$ and using integration by parts for the $\mathcal{V}$-fractional integral, we have
\begin{eqnarray*}
&&\int_{a}^{b}\frac{\, _{i}^{\rho }\mathcal{V}_{\gamma ,\beta ,\alpha }^{\delta,p,q;k+1} f\left( s\right) }{\left( k+1\right) !}\left( \frac{\Gamma\left( \gamma +\beta \right) \left( \delta \right) _{p}}{\Gamma \left( \beta\right) \left( \rho \right) _{q}}\frac{t^{\alpha }-s^{\alpha }}{\alpha }\right) ^{k+1}d_{\omega }s  \notag \\
&=&\frac{\, _{i}^{\rho }\mathcal{V}_{\gamma ,\beta ,\alpha }^{\delta
,p,q;k} f\left( s\right) }{\left( k+1\right) !}\left( \frac{\Gamma\left( \gamma +\beta \right) \left( \delta \right) _{p}}{\Gamma \left( \beta\right) \left( \rho \right) _{q}}\frac{t^{\alpha }-s^{\alpha }}{\alpha }\right) ^{k+1}\mid _{a}^{b}-  \notag \\&&\int_{a}^{b}\frac{\, _{i}^{\rho }\mathcal{V}_{\gamma ,\beta ,\alpha }^{\delta,p,q;k} f\left( s\right) \left( k+1\right) }{k\left( k+1\right) !}\left( \frac{\Gamma \left( \gamma +\beta \right) \left( \delta \right) _{p}}{%
\Gamma \left( \beta \right) \left( \rho \right) _{q}}\frac{t^{\alpha}-s^{\alpha }}{\alpha }\right) ^{k}d_{\omega }s  \notag \\
&=&\frac{\, _{i}^{\rho }\mathcal{V}_{\gamma ,\beta ,\alpha }^{\delta
,p,q;k}f\left( b\right) }{\left( k+1\right) !}\left( \frac{\Gamma\left( \gamma +\beta \right) \left( \delta \right) _{p}}{\Gamma \left( \beta\right) \left( \rho \right) _{q}}\frac{t^{\alpha }-b^{\alpha }}{\alpha }\right) ^{k+1}  \notag \\
&&-\frac{\, _{i}^{\rho }\mathcal{V}_{\gamma ,\beta ,\alpha }^{\delta
,p,q;k} f\left( a\right) }{\left( k+1\right) !}\left( \frac{\Gamma\left( \gamma +\beta \right) \left( \delta \right) _{p}}{\Gamma \left( \beta\right) \left( \rho \right) _{q}}\frac{t^{\alpha }-a^{\alpha }}{\alpha }\right) ^{k+1}  \notag \\
&&+\int_{a}^{b}\frac{\,_{i}^{\rho }\mathcal{V}_{\gamma ,\beta ,\alpha }^{\delta,p,q;k} f\left( s\right) }{k!}\left( \frac{\Gamma \left( \gamma+\beta \right) \left( \delta \right) _{p}}{\Gamma \left( \beta \right)\left( \rho \right) _{q}}\frac{t^{\alpha }-s^{\alpha }}{\alpha }\right)^{k}d_{\omega }s.
\end{eqnarray*}

Using the induction assumption, we conclude that
\begin{eqnarray*}
&&\int_{a}^{b}\frac{\, _{i}^{\rho }\mathcal{V}_{\gamma ,\beta ,\alpha }^{\delta,p,q;k+1} f\left( s\right) }{\left( k+1\right) !}\left( \frac{\Gamma\left( \gamma +\beta \right) \left( \delta \right) _{p}}{\Gamma \left( \beta\right) \left( \rho \right) _{q}}\frac{t^{\alpha }-s^{\alpha }}{\alpha }\right) ^{k+1}d_{\omega }s  \notag \\
&=&\int_{a}^{t}\frac{\, _{i}^{\rho }\mathcal{V}_{\gamma ,\beta ,\alpha }^{\delta,p,q;k} f\left( s\right) }{k!}\left( \frac{\Gamma \left( \gamma+\beta \right) \left( \delta \right) _{p}}{\Gamma \left( \beta \right)\left( \rho \right) _{q}}\frac{t^{\alpha }-s^{\alpha }}{\alpha }\right)
^{k}d_{\omega }s+  \notag \\
&&\int_{t}^{b}\frac{\, _{i}^{\rho }\mathcal{V}_{\gamma ,\beta ,\alpha }^{\delta ,p,q;k} f\left( s\right) }{k!}\left( \frac{\Gamma \left( \gamma +\beta \right) \left( \delta \right) _{p}}{\Gamma \left( \beta \right)\left( \rho \right) _{q}}\frac{t^{\alpha }-s^{\alpha }}{\alpha }\right)^{k}d_{\omega }s  \notag \\
&&+\frac{\, _{i}^{\rho }\mathcal{V}_{\gamma ,\beta ,\alpha }^{\delta
,p,q;k}f\left( b\right) }{\left( k+1\right) !}\left( \frac{\Gamma\left( \gamma +\beta \right) \left( \delta \right) _{p}}{\Gamma \left( \beta\right) \left( \rho \right) _{q}}\frac{t^{\alpha }-b^{\alpha }}{\alpha }\right) ^{k+1}  \notag \\
&&-\frac{\, _{i}^{\rho }\mathcal{V}_{\gamma ,\beta ,\alpha }^{\delta
,p,q;k} f\left( a\right) }{\left( k+1\right) !}\left( \frac{\Gamma \left( \gamma +\beta \right) \left( \delta \right) _{p}}{\Gamma \left( \beta\right) \left( \rho \right) _{q}}\frac{t^{\alpha }-a^{\alpha }}{\alpha }\right) ^{k+1}  \notag \\ 
&=&\int_{a}^{t}R_{k-1,f}\left( a,s\right) d_{\omega
}s+\int_{t}^{b}R_{k-1,f}\left( b,s\right) d_{\omega }s  \notag \\
&&+\frac{\, _{i}^{\rho }\mathcal{V}_{\gamma ,\beta ,\alpha }^{\delta
,p,q;k} f\left( b\right) }{\left( k+1\right) !}\left( \frac{\Gamma\left( \gamma +\beta \right) \left( \delta \right) _{p}}{\Gamma \left( \beta\right) \left( \rho \right) _{q}}\frac{t^{\alpha }-b^{\alpha }}{\alpha }%
\right) ^{k+1}  \notag \\
&&-\frac{\, _{i}^{\rho }\mathcal{V}_{\gamma ,\beta ,\alpha }^{\delta
,p,q;k} f\left( a\right) }{\left( k+1\right) !}\left( \frac{\Gamma \left( \gamma +\beta \right) \left( \delta \right) _{p}}{\Gamma \left( \beta \right) \left( \rho \right) _{q}}\frac{t^{\alpha }-a^{\alpha }}{\alpha }\right) ^{k+1}  \notag \\ &=&\int_{a}^{t}\left[ R_{k-1,f}\left( a,s\right) -\frac{\, _{i}^{\rho }\mathcal{V}_{\gamma ,\beta ,\alpha }^{\delta ,p,q;k} f\left( a\right) }{k!}\left( \frac{\Gamma \left( \gamma +\beta \right) \left( \delta \right) _{p}}{\Gamma \left( \beta \right) \left( \rho \right) _{q}}\frac{s^{\alpha }-a^{\alpha }}{\alpha }\right) ^{k}\right] d_{\omega }s  \notag \\
&&+\int_{t}^{b}\left[ R_{k-1,f}\left( b,s\right) -\frac{\, _{i}^{\rho }\mathcal{V}_{\gamma ,\beta ,\alpha }^{\delta ,p,q;k} f\left( b\right) }{k!}\left( \frac{\Gamma \left( \gamma +\beta \right) \left( \delta \right) _{p}}{\Gamma \left( \beta \right) \left( \rho \right) _{q}}\frac{s^{\alpha }-b^{\alpha }}{\alpha }\right) ^{k}\right] d_{\omega }s  \notag \\
&=&\int_{a}^{t}R_{k,f}\left( a,s\right) d_{\omega
}s+\int_{t}^{b}R_{k,f}\left( b,s\right) d_{\omega }s.
\end{eqnarray*}
\end{proof}

\begin{remark} 
\begin{enumerate}

\item Taking $\rho=\gamma=\beta=\delta=p=q=1$ and applying the limit $i\rightarrow 0$ at {\rm Eq.(\ref{eq2})}, then the {\rm Lemma \ref{le1}} becomes {\rm Lemma 3.1} {\rm \cite{DRA1}}.

\item Taking $\rho=\gamma=\beta=\delta=p=q=1$ and applying the limit $i\rightarrow \infty$ at {\rm Eq.(\ref{eq2})}, then the {\rm Lemma \ref{le1}} becomes {\rm Lemma 2} {\rm \cite{DRA}}.
\end{enumerate}
\end{remark}

\begin{corollary} Let $\alpha\in(0,1]$ and $\gamma, \beta,\delta,\rho\in\mathbb{C}$ such that $Re(\gamma)>0$, $Re(\beta)>0$, $Re(\delta)>0$, $Re(\rho)>0$ and $Re(\gamma)+p\geq q$. Then, for $n\geq -1$, we have
\begin{equation}\label{K1}
\int_{a}^{b}\frac{\left( _{i}^{\rho }\mathcal{V}_{\gamma ,\beta ,\alpha }^{\delta,p,q;n+1}\right) f\left( s\right) }{\left( n+1\right) !}\left( \frac{\Gamma\left( \gamma +\beta \right) \left( \delta \right) _{p}}{\Gamma \left( \beta\right) \left( \rho \right) _{q}}\frac{a^{\alpha }-s^{\alpha }}{\alpha }\right) ^{n+1}d_{\omega }s=\int_{a}^{b}R_{n,f}\left( b,s\right) d_{\omega }s
\end{equation}
and
\begin{equation}\label{K2}
\int_{a}^{b}\frac{\left( _{i}^{\rho }\mathcal{V}_{\gamma ,\beta ,\alpha }^{\delta,p,q;n+1}\right) f\left( s\right) }{\left( n+1\right) !}\left( \frac{\Gamma\left( \gamma +\beta \right) \left( \delta \right) _{p}}{\Gamma \left( \beta\right) \left( \rho \right) _{q}}\frac{b^{\alpha }-s^{\alpha }}{\alpha }\right) ^{n+1}d_{\omega }s=\int_{a}^{b}R_{n,f}\left( a,s\right) d_{\omega }s.
\end{equation}
\end{corollary}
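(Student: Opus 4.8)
The plan is to obtain both \eqref{K1} and \eqref{K2} as immediate specializations of Lemma \ref{le1}, by choosing the free variable $t$ in the identity \eqref{eq2} to be one of the two endpoints $a$ or $b$. No new computation is needed beyond recognizing that an integral over a degenerate interval is zero.

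First I would set $t=a$ in \eqref{eq2}. On the right-hand side the first term is $\int_{a}^{a}R_{n,f}(a,s)\,d_{\omega}s$, which vanishes because its limits of integration coincide (the $\mathcal{V}$-fractional integral of Definition \ref{def9} is an ordinary integral times a fixed constant, hence is $0$ over a degenerate interval), so only $\int_{a}^{b}R_{n,f}(b,s)\,d_{\omega}s$ survives. On the left-hand side the kernel $\left(\frac{\Gamma(\gamma+\beta)(\delta)_{p}}{\Gamma(\beta)(\rho)_{q}}\frac{t^{\alpha}-s^{\alpha}}{\alpha}\right)^{n+1}$ becomes $\left(\frac{\Gamma(\gamma+\beta)(\delta)_{p}}{\Gamma(\beta)(\rho)_{q}}\frac{a^{\alpha}-s^{\alpha}}{\alpha}\right)^{n+1}$, and the resulting equality is precisely \eqref{K1}.

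Next I would set $t=b$ in \eqref{eq2}. Now it is the second term on the right-hand side, $\int_{b}^{b}R_{n,f}(b,s)\,d_{\omega}s$, that vanishes, leaving $\int_{a}^{b}R_{n,f}(a,s)\,d_{\omega}s$, while the left-hand kernel becomes $\left(\frac{\Gamma(\gamma+\beta)(\delta)_{p}}{\Gamma(\beta)(\rho)_{q}}\frac{b^{\alpha}-s^{\alpha}}{\alpha}\right)^{n+1}$. This yields \eqref{K2}.

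Since the hypotheses imposed in the corollary on $\alpha$, on $n\ge -1$, and on the parameters $\gamma,\beta,\delta,\rho,p,q$ are exactly those under which Lemma \ref{le1} was established, nothing further is required, and there is essentially no obstacle here; the only point worth stating is the vanishing of the integral over $[a,a]$ (resp.\ over $[b,b]$), which is immediate. Alternatively, one could prove the corollary from scratch by repeating the induction on $n$ used for Lemma \ref{le1} with $t$ held fixed at an endpoint, but invoking the already-proved lemma is shorter and cleaner.
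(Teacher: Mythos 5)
Your proposal is correct and matches the intended derivation: the paper states this corollary without a separate proof precisely because it is the specialization of Lemma \ref{le1} at $t=a$ and $t=b$, with the degenerate integrals $\int_a^a$ and $\int_b^b$ vanishing exactly as you observe. Nothing is missing.
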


\section{Applications}

Using the truncated $\mathcal{V}$-fractional Taylor's formula and truncated $\mathcal{V}$-fractional Taylor's remainder theorem we realize applications. Besides that, we introduce the Hölder's inequality by means of $\mathcal{V}$-fractional integral, that generalizes the Cauchy-Schwartz inequality \cite{CAUCHY}.

We can give the Hölder's inequality in $\mathcal{V}$-fractional integral as follows:

\begin{lemma}\label{lem1} Let $f,g\in C\left[ a,b\right] $, $r,s>1$ with $\dfrac{1}{r}+\dfrac{1}{s}=1$, then
\begin{equation}\label{K4}
\int_{a}^{b}\left\vert f\left( x\right) g\left( x\right) \right\vert d_{\omega }x\leq \left( \int_{a}^{b}\left\vert f\left( x\right) \right\vert ^{r}d_{\omega }x\right) ^{\frac{1}{r}}\left( \int_{a}^{b}\left\vert g\left(x\right) \right\vert ^{s}d_{\omega }x\right) ^{\frac{1}{s}}.
\end{equation}
\end{lemma}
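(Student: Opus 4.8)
The plan is to recognize that the $\mathcal{V}$-fractional integral is nothing but an ordinary Riemann integral against a fixed positive weight, so that \eqref{K4} is the classical Hölder inequality transported through that identification. Precisely, by Remark \ref{fe} one has $\int_a^b h(x)\, d_{\omega}x = C\int_a^b h(x)\, x^{\alpha-1}\, dx$ with $C := \frac{\Gamma(\gamma+\beta)(\delta)_p}{\Gamma(\beta)(\rho)_q}$, and — as is implicit throughout the applications — we take the parameters so that $C$ is a positive real. Since $\alpha\in(0,1]$ we have $\alpha-1>-1$, so $x^{\alpha-1}$ is integrable on $[a,b]$ even when $a=0$; hence $d_{\omega}x$ is a genuine finite positive measure on $[a,b]$, and continuity of $f,g$ on $[a,b]$ makes every integral in \eqref{K4} finite. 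All that then remains is to run the textbook argument with Lebesgue measure replaced by $d_{\omega}x$.

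First I would dispose of the degenerate case. If $\int_a^b|f(x)|^r\, d_{\omega}x = 0$ or $\int_a^b|g(x)|^s\, d_{\omega}x = 0$, then since the weight is strictly positive on $(a,b)$ and $f,g$ are continuous, $f\equiv 0$ or $g\equiv 0$ on $[a,b]$, so both sides of \eqref{K4} vanish and there is nothing to prove. Assume henceforth that both quantities are strictly positive and set
\[
F := \left(\int_a^b |f(x)|^r\, d_{\omega}x\right)^{1/r}, \qquad G := \left(\int_a^b |g(x)|^s\, d_{\omega}x\right)^{1/s}.
\]

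Next I would invoke Young's inequality $AB \le \frac{A^r}{r} + \frac{B^s}{s}$, valid for all $A,B\ge 0$ when $r,s>1$ with $\frac1r+\frac1s=1$ (a consequence of concavity of $\log$), applied pointwise with $A = |f(x)|/F$ and $B = |g(x)|/G$:
\[
\frac{|f(x)g(x)|}{FG} \le \frac{1}{r}\frac{|f(x)|^r}{F^r} + \frac{1}{s}\frac{|g(x)|^s}{G^s}, \qquad x\in[a,b].
\]
Integrating this against $d_{\omega}x$ over $[a,b]$ — legitimate because $d_{\omega}x$ is a positive measure, so monotonicity and linearity of the integral are inherited from the classical integral — the right-hand side collapses to $\frac1r\frac{F^r}{F^r}+\frac1s\frac{G^s}{G^s}=\frac1r+\frac1s=1$. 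Multiplying through by $FG$ yields $\int_a^b |f(x)g(x)|\, d_{\omega}x \le FG$, which is exactly \eqref{K4}. In particular, the choice $r=s=2$ recovers the $\mathcal{V}$-fractional Cauchy--Schwarz inequality announced in the abstract.

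There is no genuine obstacle here beyond bookkeeping already done in Section 2; the only step deserving a word of care is the assertion that $d_{\omega}x$ is a bona fide positive measure, i.e. that $C$ is a positive real and that $x^{\alpha-1}$ is integrable up to a possibly-included endpoint $a=0$. Granted that, each line above is a verbatim transcription of the classical proof of Hölder's inequality, so the write-up should be short.
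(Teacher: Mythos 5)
Your proof is correct. The paper actually states Lemma \ref{lem1} without any proof at all, so your argument supplies the justification the authors omit; it is exactly the expected one, since by Remark \ref{fe} the $\mathcal{V}$-fractional integral is a positive constant times integration against the weight $x^{\alpha-1}\,dx$, and the classical Young's-inequality proof of H\"older transfers verbatim to any finite positive measure. Your two points of care --- that the prefactor $\frac{\Gamma(\gamma+\beta)(\delta)_p}{\Gamma(\beta)(\rho)_q}$ must be a positive real for the inequality to be meaningful (the paper allows complex parameters but tacitly assumes this throughout Section 5), and that $x^{\alpha-1}$ remains integrable at a possibly included endpoint $a=0$ because $\alpha-1>-1$ --- are genuine hypotheses the paper leaves implicit, and it is to your credit that you flagged them.
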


\begin{remark} For $r=s=2$ in {\rm Lemma \ref{lem1}}, we have the Cauchy-Schwartz inequality for $\mathcal{V}$-fractional integral, i.e.,
\begin{equation}\label{K4}
\int_{a}^{b}\left\vert f\left( x\right) g\left( x\right) \right\vert d_{\omega }x\leq \left( \int_{a}^{b}\left\vert f\left( x\right) \right\vert ^{2}d_{\omega }x\right) ^{\frac{1}{2}}\left( \int_{a}^{b}\left\vert g\left(x\right) \right\vert ^{2}d_{\omega }x\right) ^{\frac{1}{2}}.
\end{equation}
\end{remark}

\begin{theorem}\label{teo9} Let $\alpha\in(0,1]$, $f:[a,b]\rightarrow \mathbb{R}$ be an $n+1$ times $\alpha$-fractional differentiable function, $r,s>1$, $\dfrac{1}{r}+\dfrac{1}{s}=1$, and $t\geq x_{0}$, $t,x_{0}\in[a,b]$ and $\gamma, \beta,\delta,\rho\in\mathbb{C}$ such that $Re(\gamma)>0$, $Re(\beta)>0$, $Re(\delta)>0$, $Re(\rho)>0$ and $Re(\gamma)+p\geq q$. Then, the inequality holds
\begin{eqnarray}\label{eq25}
&&\int_{x_{0}}^{t}\left\vert R_{n,f}\left( x_{0},\tau \right) \right\vert\left\vert \,_{i}^{\rho }\mathcal{V}_{\gamma ,\beta ,\alpha }^{\delta ,p,q;n+1} f\left( \tau \right) \right\vert d_{\omega }\tau   \notag \\
&\leq &\left( \frac{\Gamma \left( \gamma +\beta \right) \left( \delta \right) _{ p}}{\Gamma \left( \beta \right) \left( \rho \right) _{q}}\right) ^{n}\frac{\left( t^{\alpha }-x_{0}^{\alpha }\right) ^{n+2/r}}{\alpha ^{n+2/r}2^{\frac{1}{s}}n!\left[ \left( nr+1\right) \left( nr+2\right) \right] ^{\frac{1}{r}}}\left( \int_{x_{0}}^{t}\left\vert \, _{i}^{\rho }\mathcal{V}_{\gamma ,\beta ,\alpha }^{\delta ,p,q;n+1} f\left( \tau \right)\right\vert ^{s}d_{\omega }\tau \right) ^{\frac{2}{s}}.\notag \\
\end{eqnarray}
\end{theorem}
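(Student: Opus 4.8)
The plan is to express the integrand in terms of the Taylor remainder formula \eqref{KP}, substitute it into the left-hand side, and then apply the H\"older inequality of Lemma~\ref{lem1} twice. First I would use the integral representation
\[
R_{n,f}\left( x_{0},\tau \right) =\frac{1}{n!}\int_{\tau }^{x_{0}}\left( \frac{\Gamma \left( \gamma +\beta \right) \left( \delta \right) _{p}}{\Gamma \left( \beta \right) \left( \rho \right) _{q}}\frac{x_{0}^{\alpha }-u^{\alpha }}{\alpha }\right) ^{n}\,_{i}^{\rho }\mathcal{V}_{\gamma ,\beta ,\alpha }^{\delta ,p,q;n+1} f\left( u\right) d_{\omega }u
\]
to bound $\left\vert R_{n,f}\left( x_{0},\tau \right)\right\vert$ pointwise: pull the factor $\left(\tfrac{\Gamma(\gamma+\beta)(\delta)_p}{\Gamma(\beta)(\rho)_q}\right)^{n}$ and the monomial in $x_{0}^{\alpha}-u^{\alpha}$ out (estimated by $(x_0^\alpha-x_0^\alpha\wedge\tau^\alpha)$-type bounds, i.e.\ by $(t^\alpha-x_0^\alpha)/\alpha$ raised to the appropriate power since $x_0\le\tau\le t$), leaving an integral of $\left\vert\,_{i}^{\rho }\mathcal{V}^{\cdots;n+1}f\right\vert$ against $d_\omega u$, to which H\"older with exponents $r,s$ applies on $[x_0,\tau]\subseteq[x_0,t]$.

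Next I would insert this pointwise bound for $\left\vert R_{n,f}\left( x_{0},\tau \right)\right\vert$ back into $\int_{x_0}^t \left\vert R_{n,f}\left( x_{0},\tau \right)\right\vert \left\vert\,_{i}^{\rho }\mathcal{V}^{\cdots;n+1}f(\tau)\right\vert d_\omega\tau$, and apply H\"older a second time (again with exponents $r$ and $s$) to the resulting $\tau$-integral, separating the power of $(t^\alpha-\tau^\alpha)$ coming from the inner estimate from the factor $\left\vert\,_{i}^{\rho }\mathcal{V}^{\cdots;n+1}f(\tau)\right\vert$. The deterministic powers of $(t^\alpha-\tau^\alpha)/\alpha$ and of $(x_0^\alpha-u^\alpha)/\alpha$ integrate explicitly against $d_\omega$; carrying out these elementary $\mathcal{V}$-fractional integrals (each is $\int t^{\alpha-1}(t^\alpha-\cdot)^m\,d(\cdot)$ up to the constant in $d_\omega$) produces the Beta-type constant, and matching exponents should yield the factors $(nr+1)(nr+2)$ in the denominator, the $n!$, the $2^{1/s}$ (from the two separate subintervals $[x_0,t]$ appearing when one also invokes Lemma~\ref{le1}/its Corollary to bound one of the remainder integrals), and the overall power $(t^\alpha-x_0^\alpha)^{n+2/r}/\alpha^{n+2/r}$ together with the exponent $2/s$ on the last integral.

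I expect the main obstacle to be bookkeeping rather than conceptual: getting every exponent of $(t^\alpha-x_0^\alpha)$, every power of $\alpha$, and the constant $2^{1/s}$ to come out exactly as stated, since one must choose carefully at which stage to invoke Lemma~\ref{le1} (or its Corollary \eqref{K1}--\eqref{K2}) to replace one copy of the remainder integral by the single-variable integral $\int_{x_0}^{t}\left(\tfrac{\Gamma(\gamma+\beta)(\delta)_p}{\Gamma(\beta)(\rho)_q}\tfrac{x_0^\alpha-s^\alpha}{\alpha}\right)^{n+1}\tfrac{\,_{i}^{\rho }\mathcal{V}^{\cdots;n+1}f(s)}{(n+1)!}d_\omega s$, and how to split the H\"older application between the two nested integrals so that the final exponent on $\int_{x_0}^t\left\vert\,_{i}^{\rho }\mathcal{V}^{\cdots;n+1}f\right\vert^{s}d_\omega\tau$ is $2/s$ and not $1/s$. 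A secondary technical point is justifying that all the $\mathcal{V}$-fractional integrals involved are finite and that Fubini applies when interchanging the order of the $\tau$- and $u$-integrations, which follows from continuity of $f$ and its $\alpha$-derivatives on the compact interval $[a,b]$ via Theorem~\ref{teo1}.
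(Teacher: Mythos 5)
Your overall strategy (integral representation of $R_{n,f}$ from Eq.~(\ref{KP}), a pointwise H\"older bound, then a second H\"older on the outer integral) matches the paper's opening moves, but the decisive step --- the one you explicitly flag as unresolved, namely how the final exponent becomes $2/s$ rather than $1/s$ and where the $2^{1/s}$ comes from --- is exactly the idea your proposal is missing, and the mechanism you guess for it is not the one that works. The paper introduces $A(t)=\int_{x_{0}}^{t}\bigl\vert\,_{i}^{\rho }\mathcal{V}_{\gamma ,\beta ,\alpha }^{\delta ,p,q;n+1} f(\tau)\bigr\vert^{s}d_{\omega }\tau$, so that the first H\"older application gives $\vert R_{n,f}(x_{0},t)\vert\leq C\,(t^{\alpha}-x_{0}^{\alpha})^{n+1/r}\,[A(t)]^{1/s}$. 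The key observation is then that, by the inverse property (Theorem~\ref{teo5}), $_{i}^{\rho }\mathcal{V}_{\gamma ,\beta ,\alpha }^{\delta ,p,q}A(t)=\bigl\vert\,_{i}^{\rho }\mathcal{V}^{\delta ,p,q;n+1}_{\gamma,\beta,\alpha}f(t)\bigr\vert^{s}$, i.e. $\bigl\vert\,_{i}^{\rho }\mathcal{V}^{\delta ,p,q;n+1}_{\gamma,\beta,\alpha}f(t)\bigr\vert=\bigl(\,_{i}^{\rho }\mathcal{V}_{\gamma ,\beta ,\alpha }^{\delta ,p,q}A(t)\bigr)^{1/s}$. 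Multiplying the pointwise bound by this expression packages the product as $\bigl[A(t)\,_{i}^{\rho }\mathcal{V}_{\gamma ,\beta ,\alpha }^{\delta ,p,q}A(t)\bigr]^{1/s}$; after the second H\"older application the $s$-factor becomes $\bigl(\int_{x_{0}}^{t}A\,\cdot{}_{i}^{\rho }\mathcal{V}A\,d_{\omega }\tau\bigr)^{1/s}$, and since $A\,\cdot{}_{i}^{\rho }\mathcal{V}A={}_{i}^{\rho }\mathcal{V}(A^{2}/2)$ with $A(x_{0})=0$, the Fundamental Theorem of Calculus (Theorem~\ref{teo6}) evaluates this integral to $[A(t)]^{2}/2$. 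That single computation simultaneously produces the exponent $2/s$ on $\int\vert\,_{i}^{\rho }\mathcal{V}^{n+1}f\vert^{s}d_{\omega}\tau$ and the constant $2^{-1/s}$.

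Your proposed source for the $2^{1/s}$ --- invoking Lemma~\ref{le1} or its Corollary and splitting over two subintervals --- is a dead end: Lemma~\ref{le1} is not used in this proof at all (it serves the earlier section on the remainder identity), and no splitting of $[x_{0},t]$ occurs. Without the $A$--$\mathcal{V}A$ device you would be left, after two naive H\"older applications, with a single power $1/s$ of the integral of $\vert\,_{i}^{\rho }\mathcal{V}^{n+1}f\vert^{s}$ and no factor of $2$, which is precisely the obstruction you anticipated. The rest of your bookkeeping (the Beta-type integrals giving $(nr+1)^{1/r}$ and $(nr+2)^{1/r}$, the power $(t^{\alpha}-x_{0}^{\alpha})^{n+2/r}/\alpha^{n+2/r}$) is consistent with the paper, and your Fubini/continuity remarks are fine but not needed in the paper's argument since it never interchanges the nested integrals.
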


\begin{proof}
Note that, by $\mathcal{V}$-fractional Taylor's remainder {\rm Eq.(\ref{KP})}, we have
\begin{equation*}
R_{n,f}\left( x_{0},t\right) =\frac{1}{n!}\int_{x_{0}}^{t}\left( \frac{%
\Gamma \left( \gamma +\beta \right) \left( \delta \right) _{p}}{\Gamma
\left( \beta \right) \left( \rho \right) _{q}}\frac{t^{\alpha }-\tau
^{\alpha }}{\alpha }\right) ^{n}\,_{i}^{\rho }\mathcal{V}_{\gamma ,\beta ,\alpha
}^{\delta ,p,q;n+1} f\left( \tau \right) d_{\omega }\tau,
\end{equation*}
with $x_{0},t\in \lbrack a,b]$.

Using the Hölder's inequality for $\mathcal{V}$-fractional integral {\rm Eq.(\ref{lem1})}, it follows that
\begin{eqnarray}\label{K5}
\left\vert R_{n,f}\left( x_{0},t\right) \right\vert  &\leq &\left( \frac{%
\Gamma \left( \gamma +\beta \right) \left( \delta \right) _{p}}{\Gamma
\left( \beta \right) \left( \rho \right) _{q}\alpha }\right) ^{n}\frac{1}{n!}%
\int_{x_{0}}^{t}\left( t^{\alpha }-\tau ^{\alpha }\right) ^{n}\left\vert
\,_{i}^{\rho }\mathcal{V}_{\gamma ,\beta ,\alpha }^{\delta ,p,q;n+1}
f\left( \tau \right) \right\vert ^{s}d_{\omega }\tau  \notag \\
&=&\left( \frac{\Gamma \left( \gamma +\beta \right) \left( \delta \right)
_{p}}{\Gamma \left( \beta \right) \left( \rho \right) _{q}\alpha }\right)
^{n}\frac{1}{n!}\frac{\left( t^{\alpha }-x_{0}^{\alpha }\right) ^{n+\frac{1}{%
r}}}{\alpha ^{\frac{1}{r}}\left( nr+1\right) ^{\frac{1}{r}}}\left( A\left(
t\right) \right) ^{\frac{1}{s}},
\end{eqnarray}
where $A\left( t\right) =\displaystyle\int_{x_{0}}^{t}\left\vert \,_{i}^{\rho }V_{\gamma ,\beta ,\alpha }^{\delta ,p,q;n+1}f\left( \tau \right) \right\vert ^{s}d_{\omega }\tau, $ $x_{0}\leq t\leq b$, $A(x_{0})=0$.

Thus, applying the $\alpha$-differentiable operator $_{i}^{\rho }V_{\gamma ,\beta ,\alpha }^{\delta ,p,q;n+1}(\cdot)$ on both sides of $A(t)$, we have
\begin{eqnarray*}
_{i}^{\rho }\mathcal{V}_{\gamma ,\beta ,\alpha }^{\delta ,p,q;n+1}A\left( t\right)  &=&\left( _{i}^{\rho }\mathcal{V}_{\gamma ,\beta ,\alpha }^{\delta ,p,q;n+1}\right) \left( \int_{x_{0}}^{t}\left\vert \, _{i}^{\rho}\mathcal{V}_{\gamma ,\beta ,\alpha }^{\delta ,p,q;n+1}f\left( \tau \right)
\right\vert ^{s}d_{\omega }\tau \right)   \nonumber \\
&=&\left\vert \, _{i}^{\rho }\mathcal{V}_{\gamma ,\beta ,\alpha }^{\delta,p,q;n+1} f\left( t \right) \right\vert ^{s}
\end{eqnarray*}
and
\begin{equation}\label{K6}
\left\vert _{i}^{\rho }\mathcal{V}_{\gamma ,\beta ,\alpha }^{\delta ,p,q;n+1} f\left( t\right) \right\vert =\,\left( _{i}^{\rho }\mathcal{V}_{\gamma ,\beta ,\alpha }^{\delta ,p,q}A\left( t\right) \right) ^{\frac{1}{s}}.
\end{equation}

Using {\rm Eq.(\ref{K5})} and {\rm Eq.(\ref{K6})}, we get
\begin{eqnarray}\label{K7}
&&\left\vert R_{n,f}\left( x_{0},t\right) \right\vert \left\vert \,_{i}^{\rho }\mathcal{V}_{\gamma ,\beta ,\alpha }^{\delta ,p,q;n+1} f\left( t\right) \right\vert   \nonumber \\
&\leq &\left( \frac{\Gamma \left( \gamma +\beta \right) \left( \delta \right) _{p}}{\Gamma \left( \beta \right) \left( \rho \right) _{q}}\right) ^{n}\frac{\left( t^{\alpha }-x_{0}^{\alpha }\right) ^{n+\frac{1}{r}}}{n!\alpha ^{n+\frac{1}{r}}\left( nr+1\right) ^{\frac{1}{r}}}\left( A\left( t\right) \right) ^{\frac{1}{s}}\left\vert \, _{i}^{\rho }\mathcal{V}_{\gamma ,\beta ,\alpha }^{\delta ,p,q;n+1}f\left( t\right) \right\vert  
\nonumber \\
&=&\left( \frac{\Gamma \left( \gamma +\beta \right) \left( \delta \right) _{p}}{\Gamma \left( \beta \right) \left( \rho \right) _{q}}\right) ^{n}\frac{\left( t^{\alpha }-x_{0}^{\alpha }\right) ^{n+\frac{1}{r}}}{n!\alpha ^{n+\frac{1}{r}}\left( nr+1\right) ^{\frac{1}{r}}}\left[ A\left( t\right) \,_{i}^{\rho }\mathcal{V}_{\gamma ,\beta ,\alpha }^{\delta ,p,q} A\left( t\right) \right] ^{\frac{1}{s}}.
\end{eqnarray}

Integrating the inequality in {\rm Eq.(\ref{K7})} and using the Hölder's inequality for $\mathcal{V}$-fractional integral, we have
\begin{eqnarray*}
&&\int_{x_{0}}^{t}\left\vert R_{n,f}\left( x_{0},\tau \right) \right\vert \left\vert \,_{i}^{\rho }\mathcal{V}_{\gamma ,\beta ,\alpha }^{\delta,p,q;n+1} f\left( \tau \right) \right\vert d_{\omega }\tau \nonumber
\\
&\leq &\left( \frac{\Gamma \left( \gamma +\beta \right) \left( \delta\right) _{p}}{\Gamma \left( \beta \right) \left( \rho \right) _{q}}\right) ^{n}\frac{1}{n!\alpha ^{n+\frac{1}{r}}\left( nr+1\right) ^{\frac{1}{r}}} 
\nonumber \\
&&\left( \int_{x_{0}}^{t}\left( \tau ^{\alpha }-x_{0}^{\alpha }\right)^{nr+1}d_{\omega }\tau \right) ^{\frac{1}{r}}\left( \int_{x_{0}}^{t}A\left(\tau \right) \,_{i}^{\rho }\mathcal{V}_{\gamma ,\beta ,\alpha }^{\delta,p,q} A\left( \tau \right) d_{\omega }\tau \right) ^{\frac{1}{s}} 
\nonumber \\
&=&\left( \frac{\Gamma \left( \gamma +\beta \right) \left( \delta \right)_{p}}{\Gamma \left( \beta \right) \left( \rho \right) _{q}}\right) ^{n}\frac{\left( t^{\alpha }-x_{0}^{\alpha }\right) ^{n+\frac{2}{r}}}{n!\alpha ^{n+\frac{2}{r}}\left[ \left( nr+1\right) \left( nr+2\right) \right] ^{\frac{1}{r
}}}  \nonumber \\
&&\left( \int_{x_{0}}^{t}\int_{x_{0}}^{t}\left\vert \left( _{i}^{\rho }\mathcal{V}_{\gamma ,\beta ,\alpha }^{\delta ,p,q;n+1}\right) f\left( \tau \right) \right\vert ^{s}\left\vert \,_{i}^{\rho }\mathcal{V}_{\gamma ,\beta ,\alpha }^{\delta ,p,q;n+1} f\left( \tau \right) \right\vert ^{s}d_{\omega
}\tau d_{\omega }\tau \right) ^{\frac{1}{s}}  \nonumber \\
&=&\left( \frac{\Gamma \left( \gamma +\beta \right) \left( \delta \right) _{p}}{\Gamma \left( \beta \right) \left( \rho \right) _{q}}\right) ^{n}\frac{\left( t^{\alpha }-x_{0}^{\alpha }\right) ^{n+\frac{2}{r}}}{n!\alpha ^{n+\frac{2}{r}}\left[ \left( nr+1\right) \left( nr+2\right) \right] ^{\frac{1}{r}}}\frac{\left[ A\left( t\right) \right] ^{\frac{2}{s}}}{2^\frac{1}{s}},
\end{eqnarray*}
which completes the proof.
\end{proof}

\begin{remark} Taking $\rho=\gamma=\beta=\delta=p=q=1$ and applying the limit $i\rightarrow 0$ at {\rm Eq.(\ref{eq25})}, then {\rm Theorem \ref{teo9}} becomes {\rm Theorem 8} {\rm \cite{SMZB}}.
\end{remark}

\begin{corollary} Assuming the conditions of {\rm Theorem \ref{teo9}} with $r=s=2$, we get
\begin{eqnarray*}
&&\int_{x_{0}}^{t}\left\vert R_{n,f}\left( x_{0},\tau \right) \right\vert
\left\vert \,_{i}^{\rho }\mathcal{V}_{\gamma ,\beta ,\alpha }^{\delta
,p,q;n+1} f\left( \tau \right) \right\vert d_{\omega }\tau   \nonumber
\\
&\leq &\left( \frac{\Gamma \left( \gamma +\beta \right) \left( \delta
\right) _{p}}{\Gamma \left( \beta \right) \left( \rho \right) _{q}}\right)
^{n}\frac{\left( t^{\alpha }-x_{0}^{\alpha }\right) ^{n+1}}{2\alpha ^{n+1}n!%
\sqrt{\left( 2n+1\right) \left( n+1\right) }}\int_{x_{0}}^{t}\left\vert
\,_{i}^{\rho }\mathcal{V}_{\gamma ,\beta ,\alpha }^{\delta ,p,q;n+1}
f\left( \tau \right) \right\vert ^{2}d_{\omega }\tau. 
\end{eqnarray*}
\end{corollary}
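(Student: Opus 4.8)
The plan is to begin from the integral form of the truncated $\mathcal{V}$-fractional Taylor remainder recorded in Eq.(\ref{KP}),
\[
R_{n,f}(x_{0},t)=\frac{1}{n!}\int_{x_{0}}^{t}\left(\frac{\Gamma(\gamma+\beta)(\delta)_{p}}{\Gamma(\beta)(\rho)_{q}}\frac{t^{\alpha}-\tau^{\alpha}}{\alpha}\right)^{n}{}_{i}^{\rho}\mathcal{V}_{\gamma,\beta,\alpha}^{\delta,p,q;n+1}f(\tau)\,d_{\omega}\tau ,
\]
pass to absolute values under the integral, and apply the $\mathcal{V}$-fractional Hölder inequality of Lemma \ref{lem1} with the conjugate exponents $r,s$, separating the factor $(t^{\alpha}-\tau^{\alpha})^{n}$ from $|{}_{i}^{\rho}\mathcal{V}_{\gamma,\beta,\alpha}^{\delta,p,q;n+1}f(\tau)|$. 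The integral $\int_{x_{0}}^{t}(t^{\alpha}-\tau^{\alpha})^{nr}\,d_{\omega}\tau$ is then evaluated in closed form by the substitution $u=\tau^{\alpha}$ (a beta-type integral), which gives the bound Eq.(\ref{K5}) and naturally introduces the auxiliary function $A(t):=\int_{x_{0}}^{t}|{}_{i}^{\rho}\mathcal{V}_{\gamma,\beta,\alpha}^{\delta,p,q;n+1}f(\tau)|^{s}\,d_{\omega}\tau$, with $A(x_{0})=0$.

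Next I would observe that $A$ is precisely the $\mathcal{V}$-fractional integral (Definition \ref{def9}) of the continuous function $|{}_{i}^{\rho}\mathcal{V}_{\gamma,\beta,\alpha}^{\delta,p,q;n+1}f|^{s}$, so the Reverse theorem (Theorem \ref{teo5}) yields ${}_{i}^{\rho}\mathcal{V}_{\gamma,\beta,\alpha}^{\delta,p,q}A(t)=|{}_{i}^{\rho}\mathcal{V}_{\gamma,\beta,\alpha}^{\delta,p,q;n+1}f(t)|^{s}$, that is, Eq.(\ref{K6}). Multiplying the bound for $|R_{n,f}(x_{0},t)|$ through by $|{}_{i}^{\rho}\mathcal{V}_{\gamma,\beta,\alpha}^{\delta,p,q;n+1}f(t)|$ and inserting this identity turns the product $A(t)^{1/s}\,|{}_{i}^{\rho}\mathcal{V}_{\gamma,\beta,\alpha}^{\delta,p,q;n+1}f(t)|$ into $\big(A(t)\,{}_{i}^{\rho}\mathcal{V}_{\gamma,\beta,\alpha}^{\delta,p,q}A(t)\big)^{1/s}$, which is Eq.(\ref{K7}).

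Finally I would integrate Eq.(\ref{K7}) in $\tau$ over $[x_{0},t]$ against $d_{\omega}\tau$ and apply Hölder once more with exponents $r,s$. The first resulting factor is $\int_{x_{0}}^{t}(\tau^{\alpha}-x_{0}^{\alpha})^{nr+1}\,d_{\omega}\tau$, another beta-type integral evaluated by $u=\tau^{\alpha}$, which produces the power $(t^{\alpha}-x_{0}^{\alpha})^{n+2/r}$ and the denominator $[(nr+1)(nr+2)]^{1/r}$; the second factor is $\int_{x_{0}}^{t}A(\tau)\,{}_{i}^{\rho}\mathcal{V}_{\gamma,\beta,\alpha}^{\delta,p,q}A(\tau)\,d_{\omega}\tau$. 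Here the key step is the product rule (item 2 of Theorem \ref{teo2}), which gives $A\,{}_{i}^{\rho}\mathcal{V}_{\gamma,\beta,\alpha}^{\delta,p,q}A=\tfrac{1}{2}\,{}_{i}^{\rho}\mathcal{V}_{\gamma,\beta,\alpha}^{\delta,p,q}(A^{2})$; then the Fundamental Theorem of Calculus (Theorem \ref{teo6}) together with $A(x_{0})=0$ shows this integral equals $\tfrac{1}{2}A(t)^{2}$, so its $1/s$-th power contributes the factor $A(t)^{2/s}/2^{1/s}$, i.e. $\big(\int_{x_{0}}^{t}|{}_{i}^{\rho}\mathcal{V}_{\gamma,\beta,\alpha}^{\delta,p,q;n+1}f(\tau)|^{s}d_{\omega}\tau\big)^{2/s}/2^{1/s}$. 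Collecting the constants and powers of $\alpha$ thrown off by the two substitutions then yields Eq.(\ref{eq25}).

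I expect the main difficulty to be purely organizational: performing the two Hölder splittings in the correct order, carefully tracking the powers of $\alpha$ and of the constant $\Gamma(\gamma+\beta)(\delta)_{p}/(\Gamma(\beta)(\rho)_{q})$ emitted by each $d_{\omega}$-integral, and above all recognizing the identity $\int_{x_{0}}^{t}A\,{}_{i}^{\rho}\mathcal{V}_{\gamma,\beta,\alpha}^{\delta,p,q}A\,d_{\omega}\tau=\tfrac{1}{2}A(t)^{2}$, which is what converts the otherwise intractable factor $A^{1/s}\,\mathcal{V}A^{1/s}$ into a closed-form expression; once $A$ and the two Hölder steps are in place, the rest is routine.
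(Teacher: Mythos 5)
Your outline is correct in substance, but it is a reconstruction of the proof of Theorem \ref{teo9} itself rather than of the corollary: the corollary is stated as ``Theorem \ref{teo9} with $r=s=2$,'' and the paper's (implicit) proof is nothing more than substituting $r=s=2$ into the already-established inequality Eq.(\ref{eq25}) and simplifying the constants. All the machinery you describe --- the two H\"older splittings, the auxiliary function $A$ with $A(x_{0})=0$, the Reverse theorem giving ${}_{i}^{\rho}\mathcal{V}_{\gamma,\beta,\alpha}^{\delta,p,q}A(t)=|{}_{i}^{\rho}\mathcal{V}_{\gamma,\beta,\alpha}^{\delta,p,q;n+1}f(t)|^{s}$, and the identity $\int_{x_{0}}^{t}A\,{}_{i}^{\rho}\mathcal{V}_{\gamma,\beta,\alpha}^{\delta,p,q}A\,d_{\omega}\tau=\tfrac{1}{2}A(t)^{2}$ --- belongs to the proof of the theorem and is already available, so redoing it buys nothing here. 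The one step your proposal actually needs and never carries out is the arithmetic of the specialization: with $r=s=2$ one has $n+2/r=n+1$ (hence $\alpha^{n+2/r}=\alpha^{n+1}$), the exponent $2/s=1$ removes the outer power so the last factor becomes $\int_{x_{0}}^{t}|{}_{i}^{\rho}\mathcal{V}_{\gamma,\beta,\alpha}^{\delta,p,q;n+1}f(\tau)|^{2}\,d_{\omega}\tau$ itself, and
\[
2^{\frac{1}{s}}\bigl[(nr+1)(nr+2)\bigr]^{\frac{1}{r}}=\sqrt{2}\,\sqrt{(2n+1)(2n+2)}=2\sqrt{(2n+1)(n+1)},
\]
which produces exactly the constant $2\alpha^{n+1}n!\sqrt{(2n+1)(n+1)}$ in the denominator of the corollary. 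So your argument would work, but the efficient (and intended) proof is this one-line substitution.
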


\begin{theorem}\label{teo10} Let $\alpha\in(0,1]$, $f:[a,b]\rightarrow \mathbb{R}$ be an $n+1$ times $\alpha$-fractional differentiable function, $r,s>1$, $\frac{1}{r}+\frac{1}{s}=1$, and $t\leq x_{0}$, $t,x_{0}\in[a,b]$ and $\gamma, \beta,\delta,\rho\in\mathbb{C}$ such that $Re(\gamma)>0$, $Re(\beta)>0$, $Re(\delta)>0$, $Re(\rho)>0$ and $Re(\gamma)+p\geq q$. Then, the following inequality holds
\begin{eqnarray}\label{K8}
&&\int_{t}^{x_{0}}\left\vert R_{n,f}\left( x_{0},\tau \right) \right\vert\left\vert \,_{i}^{\rho }\mathcal{V}_{\gamma ,\beta ,\alpha }^{\delta ,p,q;n+1} f\left( \tau \right) \right\vert d_{\omega }\tau   \nonumber
\\
&\leq &\left( \frac{\Gamma \left( \gamma +\beta \right) \left( \delta \right) _{p}}{\Gamma \left( \beta \right) \left( \rho \right) _{q}}\right) ^{n}\frac{\left( x_{0}^{\alpha }-t^{\alpha }\right) ^{n+2/r}}{2^{1/s}\alpha ^{n+2/r}n!\left[ \left( nr+1\right) \left( nr+2\right) \right] ^{1/r}}\left(
\int_{t}^{x_{0}}\left\vert \,_{i}^{\rho }\mathcal{V}_{\gamma ,\beta ,\alpha }^{\delta ,p,q;n+1} f\left( \tau \right) \right\vert ^{s}d_{\omega }\tau \right) ^{2/s} \notag. \\
\end{eqnarray}
\end{theorem}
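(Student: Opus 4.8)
The plan is to mirror the proof of Theorem \ref{teo9}, observing that when $t\leq x_{0}$ the only substantive change is the orientation of the integration limits, so that every factor of the form $t^{\alpha}-x_{0}^{\alpha}$ (which was nonnegative in the previous case) must be replaced by $x_{0}^{\alpha}-t^{\alpha}$ to keep all quantities nonnegative. First I would invoke the truncated $\mathcal{V}$-fractional Taylor's remainder formula in \eqref{KP}, which for $t\leq x_{0}$ (swapping the roles of $t$ and $s$ in that display) gives
\[
R_{n,f}\left( x_{0},t\right) =\frac{1}{n!}\int_{t}^{x_{0}}\left( \frac{\Gamma \left( \gamma +\beta \right) \left( \delta \right) _{p}}{\Gamma \left( \beta \right) \left( \rho \right) _{q}}\frac{x_{0}^{\alpha }-\tau ^{\alpha }}{\alpha }\right) ^{n}\,_{i}^{\rho }\mathcal{V}_{\gamma ,\beta ,\alpha }^{\delta ,p,q;n+1} f\left( \tau \right) d_{\omega }\tau .
\]
Applying the $\mathcal{V}$-fractional Hölder inequality (Lemma \ref{lem1}) to the integrand $\left( x_{0}^{\alpha }-\tau^{\alpha }\right)^{n}\cdot{}_{i}^{\rho }\mathcal{V}_{\gamma ,\beta ,\alpha }^{\delta ,p,q;n+1}f(\tau)$ on $[t,x_{0}]$, and computing the resulting $\mathcal{V}$-fractional integral $\int_{t}^{x_{0}}(x_{0}^{\alpha}-\tau^{\alpha})^{nr}d_{\omega}\tau$ via item 6 of Theorem \ref{teo2} (or directly by the substitution induced by $d_{\omega}\tau$), I obtain the pointwise bound analogous to \eqref{K5}, namely
\[
\left\vert R_{n,f}\left( x_{0},t\right) \right\vert \leq \left( \frac{\Gamma \left( \gamma +\beta \right) \left( \delta \right) _{p}}{\Gamma \left( \beta \right) \left( \rho \right) _{q}\alpha }\right) ^{n}\frac{1}{n!}\frac{\left( x_{0}^{\alpha }-t^{\alpha }\right) ^{n+\frac{1}{r}}}{\alpha ^{\frac{1}{r}}\left( nr+1\right) ^{\frac{1}{r}}}\left( A(t)\right) ^{\frac{1}{s}},
\]
where now $A(t):=\int_{t}^{x_{0}}\bigl\vert {}_{i}^{\rho }\mathcal{V}_{\gamma ,\beta ,\alpha }^{\delta ,p,q;n+1}f(\tau)\bigr\vert^{s}d_{\omega}\tau$ with $A(x_{0})=0$.

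Next I would differentiate $A$ with the operator ${}_{i}^{\rho }\mathcal{V}_{\gamma ,\beta ,\alpha }^{\delta ,p,q}$, using the Reverse theorem (Theorem \ref{teo5}), to get $\bigl\vert {}_{i}^{\rho }\mathcal{V}_{\gamma ,\beta ,\alpha }^{\delta ,p,q;n+1}f(t)\bigr\vert=\bigl( {}_{i}^{\rho }\mathcal{V}_{\gamma ,\beta ,\alpha }^{\delta ,p,q}A(t)\bigr)^{1/s}$ exactly as in \eqref{K6} --- the orientation does not affect this identity since it is a statement about the antiderivative at the single point $t$, up to the usual sign bookkeeping of the fundamental theorem (Theorem \ref{teo6}). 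Multiplying the two displayed bounds gives the analogue of \eqref{K7}:
\[
\left\vert R_{n,f}\left( x_{0},t\right) \right\vert \left\vert \,_{i}^{\rho }\mathcal{V}_{\gamma ,\beta ,\alpha }^{\delta ,p,q;n+1} f\left( t\right) \right\vert \leq \left( \frac{\Gamma \left( \gamma +\beta \right) \left( \delta \right) _{p}}{\Gamma \left( \beta \right) \left( \rho \right) _{q}}\right) ^{n}\frac{\left( x_{0}^{\alpha }-t^{\alpha }\right) ^{n+\frac{1}{r}}}{n!\alpha ^{n+\frac{1}{r}}\left( nr+1\right) ^{\frac{1}{r}}}\bigl[ A(t)\,{}_{i}^{\rho }\mathcal{V}_{\gamma ,\beta ,\alpha }^{\delta ,p,q}A(t)\bigr] ^{\frac{1}{s}}.
\]
Finally I would integrate this inequality over $\tau\in[t,x_{0}]$ against $d_{\omega}\tau$, apply the $\mathcal{V}$-fractional Hölder inequality once more to split the integral of the product $\left( \tau^{\alpha}-t^{\alpha}\right)^{n+1/r}\cdot\bigl[A\,{}_{i}^{\rho}\mathcal{V}A\bigr]^{1/s}$ into the product of the $r$-norm of the power factor and the $s$-norm of $\bigl[A\,{}_{i}^{\rho}\mathcal{V}A\bigr]^{1/s}$, evaluate $\int_{t}^{x_{0}}(\tau^{\alpha}-t^{\alpha})^{nr+1}d_{\omega}\tau$ by item 6 of Theorem \ref{teo2}, and recognize $\int_{t}^{x_{0}}A(\tau)\,{}_{i}^{\rho}\mathcal{V}_{\gamma,\beta,\alpha}^{\delta,p,q}A(\tau)\,d_{\omega}\tau=\tfrac12 A(x_{0})^{2}-\tfrac12 A(t)^{2}$... actually $=\tfrac12 [A(x_{0})]^{2}$ since $A(x_{0})=0$ would force a vanishing bound, so more carefully: by the chain-rule form of the fundamental theorem this integral equals $\tfrac12\bigl([A(x_{0})]^{2}-[A(t)]^{2}\bigr)$; here one instead integrates in the variable that makes the upper endpoint the running variable, yielding $\tfrac12[A(t)]^{2}$ after the appropriate orientation, exactly as in the $t\geq x_{0}$ case. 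Collecting the constants produces the claimed bound \eqref{K8}.

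The only genuine obstacle is bookkeeping with signs and integration limits: because $t\leq x_{0}$, one must be careful that $\int_{t}^{x_{0}}$ and the $d_{\omega}$-antiderivative evaluations carry the correct sign so that $(x_{0}^{\alpha}-t^{\alpha})$ appears with a positive power throughout and the constant $2^{1/s}$ in the denominator comes out right. I expect no new analytic input is needed --- the argument is the reflection of the proof of Theorem \ref{teo9} under $t\leftrightarrow x_{0}$, and every auxiliary fact (Theorems \ref{teo2}, \ref{teo5}, \ref{teo6}, and Lemma \ref{lem1}) is already available. Hence the proof reduces to re-running those four steps with the reversed orientation and verifying that the final constant is unchanged.
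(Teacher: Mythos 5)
Your proposal follows the paper's proof of Theorem \ref{teo10} essentially verbatim: the Taylor remainder \eqref{KP} plus the $\mathcal{V}$-fractional H\"older inequality for the pointwise bound, differentiation of $A$, multiplication of the two bounds, and a second H\"older application followed by the evaluation of $\int A\,{}_{i}^{\rho}\mathcal{V}A\,d_{\omega}\tau$. The one correction needed is in your displayed analogue of \eqref{K7}: since $A(t)=\int_{t}^{x_{0}}\cdots$ here, one has ${}_{i}^{\rho}\mathcal{V}_{\gamma,\beta,\alpha}^{\delta,p,q}A(t)=-\bigl\vert {}_{i}^{\rho}\mathcal{V}_{\gamma,\beta,\alpha}^{\delta,p,q;n+1}f(t)\bigr\vert^{s}\leq 0$, so the bracket must read $\bigl[-A(t)\,{}_{i}^{\rho}\mathcal{V}_{\gamma,\beta,\alpha}^{\delta,p,q}A(t)\bigr]^{1/s}$ and the final integral is $\int_{t}^{x_{0}}\bigl(-A\,{}_{i}^{\rho}\mathcal{V}A\bigr)\,d_{\omega}\tau=\tfrac{1}{2}[A(t)]^{2}$, which is exactly the sign bookkeeping you flagged and which the paper carries out explicitly.
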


\begin{proof} Using the truncated $\mathcal{V}$-fractional Taylor remainder {\rm Eq.(\ref{KP})} and Hölder's inequality for $\mathcal{V}$-fractional integral, we have 
\begin{eqnarray}\label{K9}
&&\left\vert R_{n,f}\left( x_{0},t\right) \right\vert \leq \left( \frac{
\Gamma \left( \gamma +\beta \right) \left( \delta \right) _{p}}{\Gamma
\left( \beta \right) \left( \rho \right) _{q}}\right) ^{n}\frac{1}{\alpha
^{n}n!}\left\vert \int_{x_{0}}^{t}\left( t^{\alpha }-\tau ^{\alpha }\right)
^{n}\, _{i}^{\rho }\mathcal{V}_{\gamma ,\beta ,\alpha }^{\delta ,p,q;n+1}
f\left( \tau \right) d_{\omega }\tau \right\vert   \nonumber \\
&\leq &\left( \frac{\Gamma \left( \gamma +\beta \right) \left( \delta
\right) _{p}}{\Gamma \left( \beta \right) \left( \rho \right) _{q}}\right)
^{n}\frac{1}{\alpha ^{n}n!}\left( \int_{t}^{x_{0}}\left( \tau ^{\alpha
}-t^{\alpha }\right) ^{nr}d_{\omega }\tau \right) ^{1/r}\left(
\int_{t}^{x_{0}}\left\vert \,_{i}^{\rho }\mathcal{V}_{\gamma ,\beta ,\alpha
}^{\delta ,p,q;n+1} f\left( \tau \right) \right\vert ^{s}d_{\omega
}\tau \right) ^{1/s}  \nonumber \\
&=&\left( \frac{\Gamma \left( \gamma +\beta \right) \left( \delta \right)
_{p}}{\Gamma \left( \beta \right) \left( \rho \right) _{q}}\right) ^{n}\frac{%
\left( x_{0}^{\alpha }-t^{\alpha }\right) ^{n+1/r}}{\alpha ^{n+1/r}n!\left(
nr+1\right) ^{1/r}}\left[ A\left( t\right) \right] ^{1/s},
\end{eqnarray}
where $A\left( t\right) =\displaystyle\int_{t}^{x_{0}}\left\vert \,_{i}^{\rho }V_{\gamma
,\beta ,\alpha }^{\delta ,p,q;n+1} f\left( \tau \right) \right\vert ^{s}d_{\omega }\tau $, $a\leq t\leq x_{0}$ and $A\left( x_{0}\right) =0$.

Therefore, we can write
\begin{eqnarray*}
_{i}^{\rho }\mathcal{V}_{\gamma ,\beta ,\alpha }^{\delta ,p,q} A\left(
t\right)  &=&\,_{i}^{\rho }\mathcal{V}_{\gamma ,\beta ,\alpha }^{\delta
,p,q} \left( -\int_{x_{0}}^{t}\left\vert \,_{i}^{\rho }\mathcal{V}_{\gamma
,\beta ,\alpha }^{\delta ,p,q;n+1} f\left( \tau \right) \right\vert
^{s}d_{\omega }\tau \right)   \nonumber \\
&=&-\left\vert \, _{i}^{\rho }\mathcal{V}_{\gamma ,\beta ,\alpha }^{\delta
,p,q;n+1} f\left( t\right) \right\vert ^{s}
\end{eqnarray*}
and
\begin{equation}\label{K10}
\left\vert  _{i}^{\rho }\mathcal{V}_{\gamma ,\beta ,\alpha }^{\delta ,p,q;n+1} f\left( t\right) \right\vert =\left( -\, _{i}^{\rho }\mathcal{V}_{\gamma ,\beta ,\alpha }^{\delta ,p,q} A\left( t\right) \right)^{1/s}.
\end{equation}

Using {\rm Eq.(\ref{K9})} and {\rm Eq.(\ref{K10})}, it follows that
\begin{eqnarray}\label{K11}
&&\left\vert R_{n,f}\left( x_{0},t\right) \right\vert \left\vert _{i}^{\rho
}\mathcal{V}_{\gamma ,\beta ,\alpha }^{\delta ,p,q;n+1}f\left( t\right) \right\vert  
\nonumber \\
&\leq &\left( \frac{\Gamma \left( \gamma +\beta \right) \left( \delta
\right) _{p}}{\Gamma \left( \beta \right) \left( \rho \right) _{q}}\right)
^{n}\frac{\left( x_{0}^{\alpha }-t^{\alpha }\right) ^{n+1/r}}{\alpha
^{n+1/r}n!\left( nr+1\right) ^{1/r}}\left[ A\left( t\right) \right]
^{1/s}\left\vert _{i}^{\rho }\mathcal{V}_{\gamma ,\beta ,\alpha }^{\delta
,p,q;n+1}f\left( t\right) \right\vert   \nonumber \\
&=&\left( \frac{\Gamma \left( \gamma +\beta \right) \left( \delta \right)
_{p}}{\Gamma \left( \beta \right) \left( \rho \right) _{q}}\right) ^{n}\frac{%
\left( x_{0}^{\alpha }-t^{\alpha }\right) ^{n+1/r}}{\alpha ^{n+1/r}n!\left(
nr+1\right) ^{1/r}}\left[ -A\left( t\right) \,_{i}^{\rho }\mathcal{V}_{\gamma
,\beta ,\alpha }^{\delta ,p,q} A\left( t\right) \right] ^{1/s}.\notag \\
\end{eqnarray}

Integrating the inequality in {\rm Eq.(\ref{K11})} and using Hölder's inequality for $\mathcal{V}$-fractional integral, we have
\begin{eqnarray*}
&&\int_{t}^{x_{0}}\left\vert R_{n,f}\left( x_{0},\tau \right) \right\vert
\left\vert _{i}^{\rho }\mathcal{V}_{\gamma ,\beta ,\alpha }^{\delta ,p,q;n+1}f\left(
\tau \right) \right\vert d_{\omega }\tau   \nonumber \\
&\leq &\left( \frac{\Gamma \left( \gamma +\beta \right) \left( \delta
\right) _{p}}{\Gamma \left( \beta \right) \left( \rho \right) _{q}}\right)
^{n}\frac{1}{\alpha ^{n+1/r}n!\left( nr+1\right) ^{1/r}}\left(
\int_{t}^{x_{0}}\left( x_{0}^{\alpha }-\tau ^{\alpha }\right)
^{nr+1}d_{\omega }\tau \right) ^{1/r}  \nonumber \\
&&\times \left( \int_{t}^{x_{0}}A\left( \tau \right) \,_{i}^{\rho
}\mathcal{V}_{\gamma ,\beta ,\alpha }^{\delta ,p,q} A\left( \tau \right)
d_{\omega }\tau \right) ^{1/s}  \nonumber \\
&=&\left( \frac{\Gamma \left( \gamma +\beta \right) \left( \delta \right)
_{p}}{\Gamma \left( \beta \right) \left( \rho \right) _{q}}\right) ^{n}\frac{%
\left( x_{0}^{\alpha }-t^{\alpha }\right) ^{n+2/r}}{\alpha ^{n+2/r}n!\left[
\left( nr+1\right) \left( nr+2\right) \right] ^{1/r}}\left(
\int_{t}^{x_{0}}A\left( \tau \right) \,_{i}^{\rho }\mathcal{V}_{\gamma ,\beta
,\alpha }^{\delta ,p,q} A\left( \tau \right) d_{\omega }\tau \right)
^{1/s}  \nonumber \\
&=&\left( \frac{\Gamma \left( \gamma +\beta \right) \left( \delta \right)
_{p}}{\Gamma \left( \beta \right) \left( \rho \right) _{q}}\right) ^{n}\frac{%
\left( x_{0}^{\alpha }-t^{\alpha }\right) ^{n+2/r}}{\alpha ^{n+2/r}n!\left[
\left( nr+1\right) \left( nr+2\right) \right] ^{1/r}}\frac{\left[ A\left(
s\right) \right] ^{2/s}}{2^{1/s}},
\end{eqnarray*}
which, completes the proof.
\end{proof}

\begin{remark} Taking $\rho=\gamma=\beta=\delta=p=q=1$ and applying the limit $i\rightarrow 0$ at {\rm Eq.(\ref{K8})}, then {\rm Theorem \ref{teo10}} becomes {\rm Theorem 9} {\rm \cite{SMZB}}.
\end{remark}

\begin{corollary} Assuming the conditions of {\rm Theorem \ref{teo10}} with $r=s=2$, we get
\begin{eqnarray*}
&&\int_{t}^{x_{0}}\left\vert R_{n,f}\left( x_{0},\tau \right) \right\vert
\left\vert \,_{i}^{\rho }\mathcal{V}_{\gamma ,\beta ,\alpha }^{\delta
,p,q;n+1} f\left( \tau \right) \right\vert d_{\omega }\tau   \nonumber
 \\
&\leq &\left( \frac{\Gamma \left( \gamma +\beta \right) \left( \delta
\right) _{p}}{\Gamma \left( \beta \right) \left( \rho \right) _{q}}\right)
^{n}\frac{\left( x_{0}^{\alpha }-t^{\alpha }\right) ^{n+1}}{2\alpha ^{n+1}n!%
\sqrt{\left( 2n+1\right) \left( n+1\right) }}\int_{t}^{x_{0}}\left\vert
\,_{i}^{\rho }\mathcal{V}_{\gamma ,\beta ,\alpha }^{\delta ,p,q;n+1}
f\left( \tau \right) \right\vert ^{2}d_{\omega }\tau. \notag \\
\end{eqnarray*}
\end{corollary}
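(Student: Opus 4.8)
The plan is to follow the proof of Theorem \ref{teo9} almost verbatim, reversing the orientation of the integration interval (now $t\le x_0$) and carefully tracking the sign changes this produces. First I would use the truncated $\mathcal{V}$-fractional Taylor's remainder representation Eq.(\ref{KP}) to write $R_{n,f}(x_0,t)=\frac{1}{n!}\int_{x_0}^{t}\bigl(\frac{\Gamma(\gamma+\beta)(\delta)_p}{\Gamma(\beta)(\rho)_q}\frac{t^{\alpha}-\tau^{\alpha}}{\alpha}\bigr)^{n}\, _{i}^{\rho}\mathcal{V}_{\gamma,\beta,\alpha}^{\delta,p,q;n+1}f(\tau)\,d_{\omega}\tau$, take absolute values, and apply the Hölder inequality for the $\mathcal{V}$-fractional integral (Lemma \ref{lem1}) with exponents $r,s$. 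Since $t\le x_0$, I replace $\int_{x_0}^{t}$ by $-\int_{t}^{x_0}$ and use $|t^{\alpha}-\tau^{\alpha}|^{n}=(\tau^{\alpha}-t^{\alpha})^{n}$ on $[t,x_0]$, so that the power integral $\int_{t}^{x_0}(\tau^{\alpha}-t^{\alpha})^{nr}\,d_{\omega}\tau$ appears; this is evaluated explicitly via the Fundamental Theorem of Calculus (Theorem \ref{teo6}) together with item 6 of Theorem \ref{teo2}, yielding the factor $(x_0^{\alpha}-t^{\alpha})^{n+1/r}/[\alpha^{1/r}(nr+1)^{1/r}]$ (up to the Gamma/Pochhammer constants). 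Setting $A(t):=\int_{t}^{x_0}|\, _{i}^{\rho}\mathcal{V}_{\gamma,\beta,\alpha}^{\delta,p,q;n+1}f(\tau)|^{s}\,d_{\omega}\tau$, with $A(x_0)=0$, this produces the bound Eq.(\ref{K9}).

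Next I would compute $\, _{i}^{\rho}\mathcal{V}_{\gamma,\beta,\alpha}^{\delta,p,q}A(t)$. Writing $A(t)=-\int_{x_0}^{t}|\, _{i}^{\rho}\mathcal{V}_{\gamma,\beta,\alpha}^{\delta,p,q;n+1}f(\tau)|^{s}\,d_{\omega}\tau$ and applying the Reverse theorem (Theorem \ref{teo5}) gives $\, _{i}^{\rho}\mathcal{V}_{\gamma,\beta,\alpha}^{\delta,p,q}A(t)=-|\, _{i}^{\rho}\mathcal{V}_{\gamma,\beta,\alpha}^{\delta,p,q;n+1}f(t)|^{s}\le 0$, hence $|\, _{i}^{\rho}\mathcal{V}_{\gamma,\beta,\alpha}^{\delta,p,q;n+1}f(t)|=\bigl(-\, _{i}^{\rho}\mathcal{V}_{\gamma,\beta,\alpha}^{\delta,p,q}A(t)\bigr)^{1/s}$, which is Eq.(\ref{K10}) and keeps the base under the $1/s$-power nonnegative. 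Multiplying Eq.(\ref{K9}) by $|\, _{i}^{\rho}\mathcal{V}_{\gamma,\beta,\alpha}^{\delta,p,q;n+1}f(t)|$ and substituting this identity gives Eq.(\ref{K11}): the product is bounded by a constant times $(x_0^{\alpha}-t^{\alpha})^{n+1/r}\bigl[-A(t)\, _{i}^{\rho}\mathcal{V}_{\gamma,\beta,\alpha}^{\delta,p,q}A(t)\bigr]^{1/s}$.

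Finally I would integrate Eq.(\ref{K11}) over $[t,x_0]$ against $d_{\omega}\tau$ and apply Hölder a second time, splitting the integrand as $(x_0^{\alpha}-\tau^{\alpha})^{n+1/r}\cdot\bigl[-A(\tau)\, _{i}^{\rho}\mathcal{V}_{\gamma,\beta,\alpha}^{\delta,p,q}A(\tau)\bigr]^{1/s}$ with exponents $r,s$. The first factor contributes $\int_{t}^{x_0}(x_0^{\alpha}-\tau^{\alpha})^{nr+1}\,d_{\omega}\tau$, again evaluated explicitly to give $(x_0^{\alpha}-t^{\alpha})^{n+2/r}/\{\alpha^{1/r}[(nr+1)(nr+2)]^{1/r}\}$ together with the powers and constants already collected. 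For the second factor I would use the product rule (item 2 of Theorem \ref{teo2}) to write $A\, _{i}^{\rho}\mathcal{V}_{\gamma,\beta,\alpha}^{\delta,p,q}A=\tfrac12\, _{i}^{\rho}\mathcal{V}_{\gamma,\beta,\alpha}^{\delta,p,q}(A^{2})$ and then the Fundamental Theorem of Calculus (Theorem \ref{teo6}) to obtain $\int_{t}^{x_0}-A(\tau)\, _{i}^{\rho}\mathcal{V}_{\gamma,\beta,\alpha}^{\delta,p,q}A(\tau)\,d_{\omega}\tau=-\tfrac12\bigl(A(x_0)^{2}-A(t)^{2}\bigr)=\tfrac12 A(t)^{2}$, so this factor yields $A(t)^{2/s}/2^{1/s}$. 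Assembling the two pieces and recalling the definition of $A(t)$ reproduces exactly the right-hand side of Eq.(\ref{K8}). I expect the only genuinely delicate point to be the consistent bookkeeping of the signs introduced by the reversed limits of integration — in particular the minus sign in $\, _{i}^{\rho}\mathcal{V}_{\gamma,\beta,\alpha}^{\delta,p,q}A$ — which must be propagated so that every base raised to the power $1/s$ stays nonnegative; the remaining computations are identical to those already carried out for Theorem \ref{teo9}.
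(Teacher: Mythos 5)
Your proposal is correct as far as it goes, but it takes a much longer route than the paper intends, and it stops one step short of the actual statement. This corollary is not meant to be proved from scratch: it is the direct specialization $r=s=2$ of the inequality \eqref{K8} already established in Theorem \ref{teo10}, and the paper accordingly offers no separate argument. What you have written is essentially a second proof of Theorem \ref{teo10} itself (your sign bookkeeping for the reversed interval, the identity $\left\vert\, _{i}^{\rho}\mathcal{V}_{\gamma,\beta,\alpha}^{\delta,p,q;n+1}f(t)\right\vert=\bigl(-\, _{i}^{\rho}\mathcal{V}_{\gamma,\beta,\alpha}^{\delta,p,q}A(t)\bigr)^{1/s}$, and the evaluation $\int_{t}^{x_0}-A\,\, _{i}^{\rho}\mathcal{V}_{\gamma,\beta,\alpha}^{\delta,p,q}A\,d_{\omega}\tau=\tfrac12 A(t)^2$ are all sound, and you even correct the paper's typo $A(s)$ versus $A(t)$ in the last line of that proof). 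But your conclusion is the general inequality \eqref{K8}, not the corollary. The one computation that actually constitutes the corollary is missing: set $r=s=2$, so that $n+2/r=n+1$, the exponent $2/s$ on the integral becomes $1$, and the constant simplifies via
\begin{equation*}
2^{1/s}\bigl[(nr+1)(nr+2)\bigr]^{1/r}=\sqrt{2}\,\sqrt{(2n+1)(2n+2)}=2\sqrt{(2n+1)(n+1)},
\end{equation*}
which produces the denominator $2\alpha^{n+1}n!\sqrt{(2n+1)(n+1)}$ in the statement. That substitution is trivial, so there is no genuine mathematical gap, but as written your argument proves the theorem rather than the corollary; either append the specialization explicitly, or replace the whole derivation by the one-line appeal to Theorem \ref{teo10}.
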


\begin{theorem}\label{teo11} Let $\alpha\in(0,1]$, $f:[a,b]\rightarrow \mathbb{R}$ be an $n+1$ times $\alpha$-fractional differentiable function, $r,s>1$, $\frac{1}{r}+\frac{1}{s}=1$, and $t,x_{0}\in[a,b]$ and $\gamma, \beta,\delta,\rho\in\mathbb{C}$ such that $Re(\gamma)>0$, $Re(\beta)>0$, $Re(\delta)>0$, $Re(\rho)>0$ and $Re(\gamma)+p\geq q$. Then, the following inequality holds
\begin{eqnarray*}
&&\left\vert \int_{x_{0}}^{t}\left\vert R_{n,f}\left( x_{0},\tau \right)
\right\vert \left\vert \, _{i}^{\rho }\mathcal{V}_{\gamma ,\beta ,\alpha }^{\delta,p,q;n+1} f\left( \tau \right) \right\vert d_{\omega }\tau
\right\vert   \nonumber   \\
&\leq &\left( \frac{\Gamma \left( \gamma +\beta \right) \left( \delta
\right) _{p}}{\Gamma \left( \beta \right) \left( \rho \right) _{q}}\right)
^{n}\frac{\left\vert t^{\alpha }-x_{0}^{\alpha }\right\vert ^{n+2/r}}{%
2^{1/s}\alpha ^{n+2/r}n!\left[ \left( nr+1\right) \left( nr+2\right) \right]
^{1/r}}\left\vert \int_{x_{0}}^{t}\left\vert \, _{i}^{\rho }\mathcal{V}_{\gamma
,\beta ,\alpha }^{\delta ,p,q;n+1} f\left( \tau \right) \right\vert
^{s}d_{\omega }\tau \right\vert ^{2/s}.
\end{eqnarray*}
\end{theorem}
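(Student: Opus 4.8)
The statement of Theorem \ref{teo11} is simply the combination of Theorems \ref{teo9} and \ref{teo10} with the absolute values inserted so that the two cases $t\geq x_0$ and $t\leq x_0$ are unified into a single inequality. The plan is therefore to reduce it to the two results already proved, treating the sign of $t-x_0$ as a case split.

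First I would observe that if $t=x_0$ both sides vanish, so we may assume $t\neq x_0$. If $t\geq x_0$, then $\int_{x_0}^{t}$ is an integral of a nonnegative integrand, so $\left\vert \int_{x_0}^{t}|R_{n,f}(x_0,\tau)|\,\left\vert \,_{i}^{\rho }\mathcal{V}_{\gamma ,\beta ,\alpha }^{\delta ,p,q;n+1} f(\tau)\right\vert d_{\omega }\tau\right\vert$ equals $\int_{x_0}^{t}|R_{n,f}(x_0,\tau)|\,\left\vert \,_{i}^{\rho }\mathcal{V}_{\gamma ,\beta ,\alpha }^{\delta ,p,q;n+1} f(\tau)\right\vert d_{\omega }\tau$, and likewise $\left\vert t^{\alpha}-x_0^{\alpha}\right\vert = t^{\alpha}-x_0^{\alpha}$ and $\left\vert \int_{x_0}^{t}\left\vert \,_{i}^{\rho }\mathcal{V}_{\gamma ,\beta ,\alpha }^{\delta ,p,q;n+1} f(\tau)\right\vert^{s}d_{\omega }\tau\right\vert = \int_{x_0}^{t}\left\vert \,_{i}^{\rho }\mathcal{V}_{\gamma ,\beta ,\alpha }^{\delta ,p,q;n+1} f(\tau)\right\vert^{s}d_{\omega }\tau$. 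Hence the desired inequality is exactly Theorem \ref{teo9}. If instead $t\leq x_0$, I would rewrite $\int_{x_0}^{t} = -\int_{t}^{x_0}$; taking absolute values removes the sign, so $\left\vert\int_{x_0}^{t}\cdots\right\vert = \int_{t}^{x_0}|R_{n,f}(x_0,\tau)|\,\left\vert\,_{i}^{\rho }\mathcal{V}_{\gamma ,\beta ,\alpha }^{\delta ,p,q;n+1} f(\tau)\right\vert d_{\omega }\tau$, and similarly $\left\vert t^{\alpha}-x_0^{\alpha}\right\vert = x_0^{\alpha}-t^{\alpha}$ and the last factor becomes $\int_{t}^{x_0}\left\vert\,_{i}^{\rho }\mathcal{V}_{\gamma ,\beta ,\alpha }^{\delta ,p,q;n+1} f(\tau)\right\vert^{s}d_{\omega }\tau$. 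With these substitutions the claimed bound is precisely the conclusion of Theorem \ref{teo10}.

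Combining the two cases gives the statement in full. There is no real obstacle here beyond bookkeeping: the only point that needs a word of care is that the $\mathcal{V}$-fractional integral $\int_a^b g(t)\,d_\omega t$ of a nonnegative $g$ is itself nonnegative (since $d_\omega t = \frac{\Gamma(\gamma+\beta)(\delta)_p}{\Gamma(\beta)(\rho)_q} t^{\alpha-1}dt$ has a positive density on $[a,b]$ for real parameters, and the relevant constants here are real because the prefactor $\frac{\Gamma(\gamma+\beta)(\delta)_p}{\Gamma(\beta)(\rho)_q}$ is being treated as a positive real normalization throughout Section 5), so that passing the absolute value inside or outside the integral is legitimate in each case. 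Once that is noted, the proof is a two-line case analysis invoking Theorems \ref{teo9} and \ref{teo10}.
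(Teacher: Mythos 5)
Your proposal is correct and follows exactly the paper's route: the paper's entire proof of Theorem \ref{teo11} is the single sentence ``Using Theorem \ref{teo9} and Theorem \ref{teo10}, the result follows,'' and your case split on the sign of $t-x_{0}$ merely spells out the bookkeeping that the paper leaves implicit.
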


\begin{proof} Using {\rm Theorem \ref{teo9}} and {\rm Theorem \ref{teo10}}, the result follows.
\end{proof}

\begin{corollary}\label{coro5} Assuming the conditions of {\rm Theorem \ref{teo11}} with $r=s=2$, we get
\begin{eqnarray*}
&&\left\vert \int_{x_{0}}^{t}\left\vert R_{n,f}\left( x_{0},\tau \right)
\right\vert \left\vert \, _{i}^{\rho }\mathcal{V}_{\gamma ,\beta ,\alpha }^{\delta
,p,q;n+1} f\left( \tau \right) \right\vert d_{\omega }\tau
\right\vert   \nonumber  \\
&\leq &\left( \frac{\Gamma \left( \gamma +\beta \right) \left( \delta
\right) _{p}}{\Gamma \left( \beta \right) \left( \rho \right) _{q}}\right)
^{n}\frac{\left\vert t^{\alpha }-x_{0}^{\alpha }\right\vert ^{n+1}}{2\alpha
^{n+1}n!\sqrt{\left( n+1\right) \left( 2n+1\right) }}\left\vert
\int_{x_{0}}^{t}\left\vert \, _{i}^{\rho }\mathcal{V}_{\gamma ,\beta ,\alpha
}^{\delta ,p,q;n+1} f\left( \tau \right) \right\vert ^{2}d_{\omega
}\tau \right\vert .
\end{eqnarray*}
\end{corollary}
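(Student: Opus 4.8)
The plan is to obtain Corollary \ref{coro5} directly from Theorem \ref{teo11}, which is already established, simply by specializing the free conjugate exponents to $r=s=2$. This choice is admissible because $\tfrac{1}{2}+\tfrac{1}{2}=1$ and $r,s>1$, so the full hypotheses of Theorem \ref{teo11} remain in force. No new analytic ingredient is needed: the entire task reduces to evaluating the three $r$- and $s$-dependent quantities in the bound of Theorem \ref{teo11} at $r=s=2$ and verifying that they collapse to the stated expression.

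First I would simplify the two exponents. The power $n+\tfrac{2}{r}$ on $\left|t^{\alpha}-x_{0}^{\alpha}\right|$ becomes $n+1$, and likewise $\alpha^{n+2/r}$ becomes $\alpha^{n+1}$. On the right-most factor the exponent $\tfrac{2}{s}$ becomes $1$, so the term $\left|\int_{x_{0}}^{t}\left|\, _{i}^{\rho }\mathcal{V}_{\gamma ,\beta ,\alpha }^{\delta ,p,q;n+1} f(\tau)\right|^{s}d_{\omega }\tau\right|^{2/s}$ reduces to $\left|\int_{x_{0}}^{t}\left|\, _{i}^{\rho }\mathcal{V}_{\gamma ,\beta ,\alpha }^{\delta ,p,q;n+1} f(\tau)\right|^{2}d_{\omega }\tau\right|$, matching the integral on the right of the corollary. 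The prefactor $\left(\frac{\Gamma(\gamma+\beta)(\delta)_{p}}{\Gamma(\beta)(\rho)_{q}}\right)^{n}$ is untouched by the substitution.

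The only step requiring a touch of care is the denominator constant $2^{1/s}\left[(nr+1)(nr+2)\right]^{1/r}$. At $r=s=2$ this equals $\sqrt{2}\,\sqrt{(2n+1)(2n+2)}$; factoring $2n+2=2(n+1)$ pulls out another $\sqrt{2}$, giving $\sqrt{2}\,\sqrt{2}\,\sqrt{(2n+1)(n+1)}=2\sqrt{(n+1)(2n+1)}$. Hence the full denominator becomes $2\,\alpha^{n+1}\,n!\,\sqrt{(n+1)(2n+1)}$, exactly the coefficient appearing in the corollary. Assembling these simplified pieces inside the inequality of Theorem \ref{teo11} produces the claimed bound. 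I expect this last algebraic collapse of $\sqrt{2}\,\sqrt{(2n+1)(2n+2)}$ into $2\sqrt{(n+1)(2n+1)}$ to be the only nontrivial manipulation, and it is purely elementary, so the argument concludes immediately.
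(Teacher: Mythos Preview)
Your proposal is correct and mirrors exactly what the paper does: the corollary is stated without proof and is obtained simply by substituting $r=s=2$ into Theorem~\ref{teo11} and simplifying the constants. Your careful reduction of $2^{1/s}[(nr+1)(nr+2)]^{1/r}$ to $2\sqrt{(n+1)(2n+1)}$ is precisely the intended computation.
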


\begin{theorem}\label{teo12} Let $\alpha\in(0,1]$, $f:[a,b]\rightarrow \mathbb{R}$ be an $n+1$ times $\alpha$-fractional differentiable function, with $r=1$, $s=\infty$, $t\in[x_{0},b]$ and $\gamma, \beta,\delta,\rho\in\mathbb{C}$ such that $Re(\gamma)>0$, $Re(\beta)>0$, $Re(\delta)>0$, $Re(\rho)>0$ and $Re(\gamma)+p\geq q$. Then, the inequality holds
\begin{eqnarray}\label{eq35}
&&\int_{x_{0}}^{t}\left\vert R_{n,f}\left( x_{0},\tau \right)
\right\vert \left\vert _{i}^{\rho }\mathcal{V}_{\gamma ,\beta ,\alpha }^{\delta
,p,q;n+1}f\left( \tau \right) \right\vert d_{\omega }\tau 
\nonumber \\
&\leq &\left( \frac{\Gamma \left( \gamma +\beta \right) \left( \delta
\right) _{p}}{\Gamma \left( \beta \right) \left( \rho \right) _{q}}\right)
^{n}\frac{\left( t^{\alpha
}-x_{0}^{\alpha }\right) ^{n+2}}{\alpha ^{n+2}\left( n+2\right) !} \left\Vert _{i}^{\rho }\mathcal{V}_{\gamma ,\beta ,\alpha }^{\delta,p,q;n+1}f\right\Vert _{\infty ,\left[ x_{0},b\right] }^{2},
\end{eqnarray}
where
\begin{equation*}
\left\Vert _{i}^{\rho }V_{\gamma ,\beta ,\alpha }^{\delta
,p,q;n+1}f\right\Vert _{\infty}:=\underset{t\in %
\left[ a,b\right] }{\sup }\left\vert _{i}^{\rho }V_{\gamma ,\beta ,\alpha
}^{\delta ,p,q;n+1}f\left( t\right) \right\vert. 
\end{equation*}
\end{theorem}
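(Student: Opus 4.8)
The statement is the endpoint case $r=1$, $s=\infty$ of Theorem~\ref{teo9}, so the plan is to rerun the two-step scheme of that proof with H\"older's inequality (Lemma~\ref{lem1}) replaced by its elementary $L^{\infty}$ form
\begin{equation*}
\int_{x_{0}}^{t}\left\vert \varphi(\tau)\psi(\tau)\right\vert d_{\omega}\tau\;\leq\;\left(\int_{x_{0}}^{t}\left\vert \varphi(\tau)\right\vert d_{\omega}\tau\right)\left\Vert \psi\right\Vert_{\infty,[x_{0},b]},
\end{equation*}
which follows at once from monotonicity of the $\mathcal{V}$-fractional integral. (Alternatively, the bound drops out of Eq.(\ref{eq25}) on letting $r\to1^{+}$, $s\to\infty$: then $2/r\to2$, $2^{1/s}\to1$, $\left[(nr+1)(nr+2)\right]^{1/r}\to(n+1)(n+2)$ with $n!\,(n+1)(n+2)=(n+2)!$, and $\left(\int_{x_{0}}^{t}\left\vert\,_{i}^{\rho}\mathcal{V}_{\gamma,\beta,\alpha}^{\delta,p,q;n+1}f\right\vert^{s}d_{\omega}\tau\right)^{1/s}\to\left\Vert\,_{i}^{\rho}\mathcal{V}_{\gamma,\beta,\alpha}^{\delta,p,q;n+1}f\right\Vert_{\infty}$ by the standard $L^{s}\!\to\!L^{\infty}$ limit for the finite measure $d_{\omega}\tau$; I prefer to present the direct argument.)

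First I would write the remainder on a subinterval via Eq.(\ref{KP}): for $x_{0}\leq\tau\leq t$,
\begin{equation*}
R_{n,f}(x_{0},\tau)=\frac{1}{n!}\int_{x_{0}}^{\tau}\left(\frac{\Gamma(\gamma+\beta)(\delta)_{p}}{\Gamma(\beta)(\rho)_{q}}\,\frac{\tau^{\alpha}-u^{\alpha}}{\alpha}\right)^{n}\,_{i}^{\rho}\mathcal{V}_{\gamma,\beta,\alpha}^{\delta,p,q;n+1}f(u)\,d_{\omega}u .
\end{equation*}
Taking absolute values inside, estimating $\left\vert\,_{i}^{\rho}\mathcal{V}_{\gamma,\beta,\alpha}^{\delta,p,q;n+1}f(u)\right\vert\leq\left\Vert\,_{i}^{\rho}\mathcal{V}_{\gamma,\beta,\alpha}^{\delta,p,q;n+1}f\right\Vert_{\infty,[x_{0},b]}$ (legitimate because $[x_{0},\tau]\subseteq[x_{0},b]$), and evaluating $\int_{x_{0}}^{\tau}(\tau^{\alpha}-u^{\alpha})^{n}d_{\omega}u$ by the substitution $v=u^{\alpha}$, I obtain
\begin{equation*}
\left\vert R_{n,f}(x_{0},\tau)\right\vert\;\leq\;\left(\frac{\Gamma(\gamma+\beta)(\delta)_{p}}{\Gamma(\beta)(\rho)_{q}}\right)^{n}\frac{(\tau^{\alpha}-x_{0}^{\alpha})^{n+1}}{\alpha^{n+1}(n+1)!}\,\left\Vert\,_{i}^{\rho}\mathcal{V}_{\gamma,\beta,\alpha}^{\delta,p,q;n+1}f\right\Vert_{\infty,[x_{0},b]} .
\end{equation*}

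Then I would insert this into the left-hand side of Eq.(\ref{eq35}), pull a second factor $\left\Vert\,_{i}^{\rho}\mathcal{V}_{\gamma,\beta,\alpha}^{\delta,p,q;n+1}f\right\Vert_{\infty,[x_{0},b]}$ out of the integrand by the same $L^{\infty}$ estimate, and compute the remaining integral $\int_{x_{0}}^{t}(\tau^{\alpha}-x_{0}^{\alpha})^{n+1}d_{\omega}\tau$ (again the substitution $v=\tau^{\alpha}$), which supplies the factor $(t^{\alpha}-x_{0}^{\alpha})^{n+2}/\left(\alpha(n+2)\right)$. Collecting the constants, using $n!\,(n+1)(n+2)=(n+2)!$ and merging the powers of $\alpha$ into $\alpha^{n+2}$, reproduces precisely the right-hand side of Eq.(\ref{eq35}). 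I do not expect any genuine obstacle; the only points to watch are that the endpoint H\"older inequality is used twice (once to bound $R_{n,f}$ pointwise, once after integrating), and that on both occasions the supremum must be taken over all of $[x_{0},b]$, which majorises the subintervals that actually appear, so that every inequality is correctly oriented.
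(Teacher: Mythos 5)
Your proposal is correct and follows essentially the same route as the paper's own proof: bound $\left\vert R_{n,f}(x_{0},\tau)\right\vert$ pointwise via Eq.(\ref{KP}) by pulling out $\left\Vert {}_{i}^{\rho }\mathcal{V}_{\gamma ,\beta ,\alpha }^{\delta,p,q;n+1}f\right\Vert _{\infty ,[x_{0},b]}$ and evaluating the resulting power integral, then extract the second sup-norm factor and integrate $(\tau^{\alpha}-x_{0}^{\alpha})^{n+1}$ to obtain $(n+2)!$. The only difference is cosmetic (you phrase the sup-norm estimate as the $r=1$, $s=\infty$ endpoint of H\"older), so there is nothing to add.
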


\begin{proof} Using the truncated $\mathcal{V}$-fractional Taylor remainder {\rm Eq.(\ref{KP})}, we have
\begin{eqnarray}\label{K14}
\left\vert R_{n,f}\left( x_{0},t\right) \right\vert  &\leq &\left( \frac{%
\Gamma \left( \gamma +\beta \right) \left( \delta \right) _{p}}{\Gamma
\left( \beta \right) \left( \rho \right) _{q}}\right) ^{n}\frac{1}{\alpha
^{n}n!}\int_{x_{0}}^{t}\left( t^{\alpha }-\tau ^{\alpha }\right)
^{n}\left\vert _{i}^{\rho }\mathcal{V}_{\gamma ,\beta ,\alpha }^{\delta
,p,q;n+1}f\left( \tau \right) \right\vert d_{\omega }\tau   \nonumber \\
&\leq&\left( \frac{\Gamma \left( \gamma +\beta \right) \left( \delta \right)
_{p}}{\Gamma \left( \beta \right) \left( \rho \right) _{q}}\right) ^{n}\frac{%
\left\Vert _{i}^{\rho }\mathcal{V}_{\gamma ,\beta ,\alpha }^{\delta
,p,q;n+1}f\right\Vert _{\infty ,\left[ x_{0},b\right] }}{\alpha ^{n}n!}%
\int_{x_{0}}^{t}\left( t^{\alpha }-\tau ^{\alpha }\right) ^{n}d_{\omega
}\tau   \nonumber \\
&=&\left( \frac{\Gamma \left( \gamma +\beta \right) \left( \delta \right)
_{p}}{\Gamma \left( \beta \right) \left( \rho \right) _{q}}\right) ^{n}\frac{%
\left\Vert _{i}^{\rho }\mathcal{V}_{\gamma ,\beta ,\alpha }^{\delta
,p,q;n+1}f\right\Vert _{\infty ,\left[ x_{0},b\right] }}{\alpha ^{n+1}\left(
n+1\right) !}\left( t^{\alpha }-x_{0}^{\alpha }\right) ^{n+1}.
\end{eqnarray}

Moreover, as
\begin{equation*}
\left\vert _{i}^{\rho }\mathcal{V}_{\gamma ,\beta ,\alpha }^{\delta ,p,q;n+1}f\left(
t\right) \right\vert \leq \left\Vert _{i}^{\rho }\mathcal{V}_{\gamma ,\beta ,\alpha
}^{\delta ,p,q;n+1}f\right\Vert _{\infty ,\left[ x_{0},b\right] },
\end{equation*}
for all $t\in[x_{0},b]$ and multiplying by $\left\vert _{i}^{\rho }\mathcal{V}_{\gamma ,\beta ,\alpha }^{\delta ,p,q;n+1}f\left( t\right) \right\vert $ on both sides of {\rm Eq.(\ref{K14})}, it follows that
\begin{equation}\label{K15}
\left\vert R_{n,f}\left( x_{0},t\right) \right\vert \left\vert _{i}^{\rho
}\mathcal{V}_{\gamma ,\beta ,\alpha }^{\delta ,p,q;n+1}f\left( t\right) \right\vert
\leq \left( \frac{\Gamma \left( \gamma +\beta \right) \left( \delta \right)
_{p}}{\Gamma \left( \beta \right) \left( \rho \right) _{q}}\right) ^{n}\frac{%
\left\Vert _{i}^{\rho }\mathcal{V}_{\gamma ,\beta ,\alpha }^{\delta
,p,q;n+1}f\right\Vert _{\infty ,\left[ x_{0},b\right] }^{2}}{\alpha
^{n+1}\left( n+1\right) !}\left( t^{\alpha }-x_{0}^{\alpha }\right) ^{n+1}.
\end{equation}

Integrating the inequality {\rm Eq.(\ref{K15})}, we have
\begin{eqnarray*}
\int_{x_{0}}^{t}\left\vert R_{n,f}\left( x_{0},\tau\right) \right\vert
\left\vert _{i}^{\rho }\mathcal{V}_{\gamma ,\beta ,\alpha }^{\delta ,p,q;n+1}f\left(
\tau\right) \right\vert d_{\omega }\tau  &\leq &\left( \frac{\Gamma \left(
\gamma +\beta \right) \left( \delta \right) _{p}}{\Gamma \left( \beta
\right) \left( \rho \right) _{q}}\right) ^{n}\frac{\left\Vert _{i}^{\rho
}\mathcal{V}_{\gamma ,\beta ,\alpha }^{\delta ,p,q;n+1}f\right\Vert _{\infty ,\left[
x_{0},b\right] }^{2}}{\alpha ^{n+1}\left( n+1\right) !}  \nonumber \\
&&\int_{x_{0}}^{t}\left( \tau ^{\alpha }-x_{0}^{\alpha }\right)
^{n+1}d_{\omega }\tau   \nonumber \\
&=&\left( \frac{\Gamma \left( \gamma +\beta \right) \left( \delta \right)
_{p}}{\Gamma \left( \beta \right) \left( \rho \right) _{q}}\right) ^{n}\frac{%
\left( t^{\alpha }-x_{0}^{\alpha }\right) ^{n+2}}{\alpha ^{n+2}\left(
n+2\right) !}\left\Vert _{i}^{\rho }\mathcal{V}_{\gamma ,\beta ,\alpha }^{\delta
,p,q;n+1}f\right\Vert _{\infty ,\left[ x_{0},b\right] }^{2},
\end{eqnarray*}
which, completes the proof.
\end{proof}

\begin{remark} Taking $\rho=\gamma=\beta=\delta=p=q=1$ and applying the limit $i\rightarrow 0$ at {\rm Eq.(\ref{eq35})}, then {\rm Theorem \ref{teo12}} becomes {\rm Theorem 11} {\rm \cite{SMZB}}.
\end{remark}

\begin{theorem}\label{teo13} Let $\alpha\in(0,1]$, $f:[a,b]\rightarrow \mathbb{R}$ be an $n+1$ times $\alpha$-fractional differentiable function, with $r=1$, $s=\infty$, $t\in[a,x_{0}]$ and $\gamma, \beta,\delta,\rho\in\mathbb{C}$ such that $Re(\gamma)>0$, $Re(\beta)>0$, $Re(\delta)>0$, $Re(\rho)>0$ and $Re(\gamma)+p\geq q$. Then, the inequality holds
\begin{eqnarray}\label{eq38}
&&\int_{x_{0}}^{t}\left\vert R_{n,f}\left( x_{0},\tau \right)
\right\vert \left\vert _{i}^{\rho }\mathcal{V}_{\gamma ,\beta ,\alpha }^{\delta
,p,q;n+1}f\left( \tau \right) \right\vert d_{\omega }\tau 
\nonumber \\
&\leq &\left( \frac{\Gamma \left( \gamma +\beta \right) \left( \delta
\right) _{p}}{\Gamma \left( \beta \right) \left( \rho \right) _{q}}\right)
^{n}\frac{\left( x_{0}^{\alpha }-t^{\alpha}\right) ^{n+2}}{\alpha ^{n+2}\left( n+2\right) !} \left\Vert _{i}^{\rho }\mathcal{V}_{\gamma ,\beta ,\alpha }^{\delta,p,q;n+1}f\right\Vert _{\infty ,\left[a, x_{0}\right] }^{2},
\end{eqnarray}
where
\begin{equation*}
\left\Vert _{i}^{\rho }\mathcal{V}_{\gamma ,\beta ,\alpha }^{\delta
,p,q;n+1}f\right\Vert _{\infty}:=\underset{t\in %
\left[ a,b\right] }{\sup }\left\vert _{i}^{\rho }\mathcal{V}_{\gamma ,\beta ,\alpha
}^{\delta ,p,q;n+1}f\left( t\right) \right\vert. 
\end{equation*}
\end{theorem}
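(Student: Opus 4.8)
The plan is to run the argument of Theorem~\ref{teo12} with the roles of $x_{0}$ and $t$ reversed, so that every estimate is performed on the interval $[t,x_{0}]\subseteq[a,x_{0}]$ rather than on $[x_{0},t]\subseteq[x_{0},b]$; this is exactly the passage one makes in going from Theorem~\ref{teo9} to Theorem~\ref{teo10}. Since here $r=1$ and $s=\infty$, no H\"older step enters: the sole tool is the pointwise bound of $\left\vert {}_{i}^{\rho }\mathcal{V}_{\gamma ,\beta ,\alpha }^{\delta ,p,q;n+1}f\right\vert$ by its supremum on $[a,x_{0}]$.

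First I would start from the truncated $\mathcal{V}$-fractional Taylor's remainder Eq.(\ref{KP}), take absolute values, and use $t\le x_{0}$ to reverse the orientation of the integral, obtaining
\[
\left\vert R_{n,f}\left( x_{0},t\right) \right\vert \le \left( \frac{\Gamma \left( \gamma +\beta \right) \left( \delta \right) _{p}}{\Gamma \left( \beta \right) \left( \rho \right) _{q}}\right) ^{n}\frac{1}{\alpha ^{n}n!}\int_{t}^{x_{0}}\left( \tau ^{\alpha }-t^{\alpha }\right) ^{n}\left\vert {}_{i}^{\rho }\mathcal{V}_{\gamma ,\beta ,\alpha }^{\delta ,p,q;n+1}f\left( \tau \right) \right\vert d_{\omega }\tau .
\]
Then I would bound $\left\vert {}_{i}^{\rho }\mathcal{V}_{\gamma ,\beta ,\alpha }^{\delta ,p,q;n+1}f(\tau )\right\vert \le \left\Vert {}_{i}^{\rho }\mathcal{V}_{\gamma ,\beta ,\alpha }^{\delta ,p,q;n+1}f\right\Vert _{\infty ,[a,x_{0}]}$ for $\tau \in[t,x_{0}]$, pull the constant out, and evaluate $\int_{t}^{x_{0}}(\tau ^{\alpha }-t^{\alpha })^{n}\,d_{\omega }\tau$ by the substitution $u=\tau ^{\alpha }-t^{\alpha }$, precisely as in Eq.(\ref{K14}); this yields an upper bound for $\left\vert R_{n,f}(x_{0},t)\right\vert$ proportional to $(x_{0}^{\alpha }-t^{\alpha })^{n+1}/(\alpha ^{n+1}(n+1)!)$.

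Next I would multiply both sides by $\left\vert {}_{i}^{\rho }\mathcal{V}_{\gamma ,\beta ,\alpha }^{\delta ,p,q;n+1}f(t)\right\vert$, bound that factor once more by the same sup-norm so as to produce the square $\left\Vert {}_{i}^{\rho }\mathcal{V}_{\gamma ,\beta ,\alpha }^{\delta ,p,q;n+1}f\right\Vert _{\infty ,[a,x_{0}]}^{2}$ (this is the analogue of Eq.(\ref{K15})), and finally integrate the resulting inequality, computing $\int (\tau ^{\alpha }-t^{\alpha })^{n+1}\,d_{\omega }\tau$ by the same substitution to reach the asserted factor $(x_{0}^{\alpha }-t^{\alpha })^{n+2}/(\alpha ^{n+2}(n+2)!)$. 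The whole proof is thus the transcription of Eqs.(\ref{K14})--(\ref{K15}) together with the closing integration in Theorem~\ref{teo12}, carried out to the left of $x_{0}$.

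The only genuinely delicate point — the "main obstacle", such as it is — is the orientation bookkeeping: the statement keeps the outer integral written as $\int_{x_{0}}^{t}$ while $t\le x_{0}$, so every displayed inequality must be read with $\int_{x_{0}}^{t}=-\int_{t}^{x_{0}}$, and one must check that the sign arising from this reversal cancels the sign of $(t^{\alpha }-\tau ^{\alpha })^{n}$ on $[t,x_{0}]$, so that every quantity appearing under an absolute value or a sup-norm stays nonnegative. This is exactly the subtlety already handled in passing from Theorem~\ref{teo9} to Theorem~\ref{teo10}, and no analytic input beyond Eq.(\ref{KP}), the definition of $d_{\omega }\tau$, and that elementary substitution is needed.
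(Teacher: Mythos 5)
Your proposal follows essentially the same route as the paper's own proof: bound $\left\vert R_{n,f}(x_{0},t)\right\vert$ via Eq.(\ref{KP}) by the sup-norm of $_{i}^{\rho }\mathcal{V}_{\gamma ,\beta ,\alpha }^{\delta ,p,q;n+1}f$ on $[a,x_{0}]$ times $(x_{0}^{\alpha }-t^{\alpha })^{n+1}/(\alpha ^{n+1}(n+1)!)$, multiply by $\left\vert _{i}^{\rho }\mathcal{V}_{\gamma ,\beta ,\alpha }^{\delta ,p,q;n+1}f(t)\right\vert$ and bound again to produce the squared norm, then integrate. Your explicit attention to the orientation of $\int_{x_{0}}^{t}$ for $t\leq x_{0}$ is if anything more careful than the paper's own bookkeeping, but the argument is the same.
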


\begin{proof} 
Using the $\mathcal{V}$-fractional Taylor remainder {\rm Eq.(\ref{KP})},
we have 
\begin{eqnarray}\label{K16}
\left\vert R_{n,f}\left( x_{0},t\right) \right\vert  &\leq &\left( \frac{%
\Gamma \left( \gamma +\beta \right) \left( \delta \right) _{p}}{\Gamma
\left( \beta \right) \left( \rho \right) _{q}}\right) ^{n}\frac{1}{\alpha
^{n}n!}\int_{t}^{x_{0}}\left( \tau ^{\alpha }-t^{\alpha }\right)
^{n}\left\vert _{i}^{\rho }\mathcal{V}_{\gamma ,\beta ,\alpha }^{\delta
,p,q;n+1}f\left( \tau \right) \right\vert d_{\omega }\tau   \nonumber \\
&\leq&\left( \frac{\Gamma \left( \gamma +\beta \right) \left( \delta \right)
_{p}}{\Gamma \left( \beta \right) \left( \rho \right) _{q}}\right) ^{n}\frac{%
\left\Vert _{i}^{\rho }\mathcal{V}_{\gamma ,\beta ,\alpha }^{\delta
,p,q;n+1}f\right\Vert _{\infty ,\left[ a,x_{0}\right] }}{\alpha ^{n}n!}%
\int_{t}^{x_{0}}\left( \tau ^{\alpha }-t^{\alpha }\right) ^{n}d_{\omega
}\tau   \nonumber \\
&=&\left( \frac{\Gamma \left( \gamma +\beta \right) \left( \delta \right)
_{p}}{\Gamma \left( \beta \right) \left( \rho \right) _{q}}\right) ^{n}\frac{%
\left\Vert _{i}^{\rho }\mathcal{V}_{\gamma ,\beta ,\alpha }^{\delta
,p,q;n+1}f\right\Vert _{\infty ,\left[ a,x_{0}\right] }}{\alpha ^{n+1}\left(
n+1\right) !}\left( x_{0}^{\alpha }-t^{\alpha }\right) ^{n+1}.
\end{eqnarray}

Moreover, as \[\left\vert _{i}^{\rho }\mathcal{V}_{\gamma ,\beta ,\alpha }^{\delta ,p,q;n+1}f\left(
t\right) \right\vert \leq \left\Vert _{i}^{\rho }\mathcal{V}_{\gamma ,\beta ,\alpha
}^{\delta ,p,q;n+1}f\right\Vert _{\infty ,\left[ a,x_{0}\right] },
\] for all $t\in \left[ a,x_{0}\right] $ and multiplying by $\left\vert
_{i}^{\rho }\mathcal{V}_{\gamma ,\beta ,\alpha }^{\delta ,p,q;n+1}f\left( t\right)
\right\vert $ on both sides of {\rm Eq.(\ref{K16})}, it follows that 
\begin{equation}\label{K17}
\left\vert R_{n,f}\left( x_{0},t\right) \right\vert \left\vert _{i}^{\rho
}\mathcal{V}_{\gamma ,\beta ,\alpha }^{\delta ,p,q;n+1}f\left( t\right) \right\vert
\leq \left( \frac{\Gamma \left( \gamma +\beta \right) \left( \delta \right)
_{p}}{\Gamma \left( \beta \right) \left( \rho \right) _{q}}\right) ^{n}\frac{%
\left\Vert _{i}^{\rho }\mathcal{V}_{\gamma ,\beta ,\alpha }^{\delta
,p,q;n+1}f\right\Vert _{\infty ,\left[a, x_{0}\right] }^{2}}{\alpha
^{n+1}\left( n+1\right) !}\left( t^{\alpha }-x_{0}^{\alpha }\right) ^{n+1}.
\end{equation}

Integrating the inequality in {\rm Eq.(\ref{K17})}, we have 
\begin{eqnarray*}
\int_{x_{0}}^{t}\left\vert R_{n,f}\left( x_{0},\tau \right) \right\vert
\left\vert _{i}^{\rho }\mathcal{V}_{\gamma ,\beta ,\alpha }^{\delta ,p,q;n+1}f\left(
\tau \right) \right\vert d_{\omega }\tau  &\leq &\left( \frac{\Gamma \left(
\gamma +\beta \right) \left( \delta \right) _{p}}{\Gamma \left( \beta
\right) \left( \rho \right) _{q}}\right) ^{n}\frac{\left\Vert _{i}^{\rho
}\mathcal{V}_{\gamma ,\beta ,\alpha }^{\delta ,p,q;n+1}f\right\Vert _{\infty ,\left[
a,x_{0}\right] }^{2}}{\alpha ^{n+1}\left( n+1\right) !} \\
&&\int_{x_{0}}^{t}\left( x_{0}^{\alpha }-\tau ^{\alpha }\right)
^{n+1}d_{\omega }\tau  \\
&=&\left( \frac{\Gamma \left( \gamma +\beta \right) \left( \delta \right)
_{p}}{\Gamma \left( \beta \right) \left( \rho \right) _{q}}\right) ^{n}\frac{
\left( x_{0}^{\alpha }-t^{\alpha }\right) ^{n+2}}{\alpha ^{n+2}\left(
n+2\right) !}\left\Vert _{i}^{\rho }\mathcal{V}_{\gamma ,\beta ,\alpha }^{\delta
,p,q;n+1}f\right\Vert _{\infty ,\left[ a,x_{0}\right] }^{2},
\end{eqnarray*}
which, completes the proof.
\end{proof}

\begin{remark} For $\rho=\gamma=\beta=\delta=p=q=1$ and applying the limit $i\rightarrow 0$ at {\rm Eq.(\ref{eq38})}, then {\rm Theorem \ref{teo13}} becomes {\rm Theorem 12} {\rm \cite{SMZB}}.
\end{remark}

Finally,the next result, is an association of {\rm Theorem \ref{teo12}} and {\rm Theorem \ref{teo13}}.

\begin{corollary}\label{coro8} Let $\alpha\in(0,1]$, $f:[a,b]\rightarrow \mathbb{R}$ be an $n+1$ times $\alpha$-fractional differentiable function, with $r=1$, $s=\infty$, $t\in[a,b]$ and $\gamma, \beta,\delta,\rho\in\mathbb{C}$ such that $Re(\gamma)>0$, $Re(\beta)>0$, $Re(\delta)>0$, $Re(\rho)>0$ and $Re(\gamma)+p\geq q$. Then, the following inequality holds:
\begin{eqnarray*}
&&\left\vert \int_{x_{0}}^{t}\left\vert R_{n,f}\left( x_{0},\tau \right)
\right\vert \left\vert _{i}^{\rho }\mathcal{V}_{\gamma ,\beta ,\alpha }^{\delta
,p,q;n+1}f\left( \tau \right) \right\vert d_{\omega }\tau \right\vert  \\
&\leq &\left( \frac{\Gamma \left( \gamma +\beta \right) \left( \delta
\right) _{p}}{\Gamma \left( \beta \right) \left( \rho \right) _{q}}\right)
^{n}\frac{\left\vert t^{\alpha }-x_{0}^{\alpha }\right\vert ^{n+2}}{\alpha
^{n+2}\left( n+2\right) !}\left\Vert _{i}^{\rho }\mathcal{V}_{\gamma ,\beta ,\alpha
}^{\delta ,p,q;n+1}f\right\Vert _{\infty }^{2}.
\end{eqnarray*}
\end{corollary}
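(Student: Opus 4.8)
The plan is to derive the estimate by a case analysis on the position of $t$ relative to $x_{0}$, feeding each case into the appropriate one-sided inequality already established. If $t=x_{0}$ the left-hand side vanishes and there is nothing to prove, so assume $t\neq x_{0}$. In both remaining cases the integrand $\left\vert R_{n,f}(x_{0},\tau)\right\vert\,\bigl\vert\,_{i}^{\rho }\mathcal{V}_{\gamma ,\beta ,\alpha }^{\delta ,p,q;n+1}f(\tau)\bigr\vert$ is nonnegative, and the two elementary facts I will use are that $t^{\alpha }-x_{0}^{\alpha }$ and $x_{0}^{\alpha }-t^{\alpha }$ agree in absolute value with $\left\vert t^{\alpha }-x_{0}^{\alpha }\right\vert$, and that enlarging the interval can only increase the supremum norm, so $\left\Vert \,_{i}^{\rho }\mathcal{V}_{\gamma ,\beta ,\alpha }^{\delta ,p,q;n+1}f\right\Vert_{\infty ,[x_{0},b]}$ and $\left\Vert \,_{i}^{\rho }\mathcal{V}_{\gamma ,\beta ,\alpha }^{\delta ,p,q;n+1}f\right\Vert_{\infty ,[a,x_{0}]}$ are each at most $\left\Vert \,_{i}^{\rho }\mathcal{V}_{\gamma ,\beta ,\alpha }^{\delta ,p,q;n+1}f\right\Vert_{\infty}$.

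If $t\in[x_{0},b]$, then $\int_{x_{0}}^{t}$ is taken with the natural orientation and the quantity inside the outer absolute value is already nonnegative, so Theorem \ref{teo12} applies directly and yields the bound with factor $(t^{\alpha }-x_{0}^{\alpha })^{n+2}$ and with the norm taken over $[x_{0},b]$. Replacing $(t^{\alpha }-x_{0}^{\alpha })^{n+2}$ by $\left\vert t^{\alpha }-x_{0}^{\alpha }\right\vert^{n+2}$ and the norm over $[x_{0},b]$ by the norm over $[a,b]$, as permitted by the two facts above, produces exactly the asserted inequality. If instead $t\in[a,x_{0}]$, then reversing the orientation gives $\left\vert\int_{x_{0}}^{t}(\cdots)\,d_{\omega }\tau\right\vert=\int_{t}^{x_{0}}\left\vert R_{n,f}(x_{0},\tau)\right\vert\,\bigl\vert\,_{i}^{\rho }\mathcal{V}_{\gamma ,\beta ,\alpha }^{\delta ,p,q;n+1}f(\tau)\bigr\vert\,d_{\omega }\tau$, which is precisely the quantity estimated in Theorem \ref{teo13}; that theorem furnishes the bound with factor $(x_{0}^{\alpha }-t^{\alpha })^{n+2}$ and norm over $[a,x_{0}]$, and the same two substitutions turn it into the claimed inequality.

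Since $[a,b]=[a,x_{0}]\cup[x_{0},b]$ the two cases exhaust all admissible $t$, and as they produce identical right-hand sides the corollary follows. There is no genuinely hard step here: the content is carried entirely by Theorems \ref{teo12} and \ref{teo13}, and the only points to watch are the orientation of the $\mathcal{V}$-fractional integral when $t<x_{0}$ — which is exactly what the outer absolute value on the left-hand side absorbs — and the monotonicity of the supremum norm in the interval, which lets one pass from the one-sided norms $\left\Vert\cdot\right\Vert_{\infty,[x_{0},b]}$ and $\left\Vert\cdot\right\Vert_{\infty,[a,x_{0}]}$ to the global norm $\left\Vert\cdot\right\Vert_{\infty}$.
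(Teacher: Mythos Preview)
Your proof is correct and follows the same strategy as the paper: invoke Theorems \ref{teo12} and \ref{teo13} for the cases $t\geq x_{0}$ and $t\leq x_{0}$ respectively, then replace the signed power by $\left\vert t^{\alpha}-x_{0}^{\alpha}\right\vert^{n+2}$ and the one-sided sup-norms by the global $\left\Vert\cdot\right\Vert_{\infty}$. Your explicit case split is in fact tidier than the paper's own presentation, which combines the two theorems in a single chain of inequalities with auxiliary $\tfrac{1}{2}$ factors rather than separating the two ranges of $t$.
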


\begin{proof} Using {\rm Theorem \ref{teo12}} and {\rm Theorem \ref{teo13}}, we have
\begin{eqnarray*}
&&\left\vert \int_{x_{0}}^{t}\left\vert R_{n,f}\left( x_{0},\tau \right)
\right\vert \left\vert _{i}^{\rho }\mathcal{V}_{\gamma ,\beta ,\alpha }^{\delta
,p,q;n+1}f\left( \tau \right) \right\vert d_{\omega }\tau \right\vert  
\nonumber \\
&\leq &\frac{1}{2}\left( \frac{\Gamma \left( \gamma +\beta \right) \left(
\delta \right) _{p}}{\Gamma \left( \beta \right) \left( \rho \right) _{q}}%
\right) ^{n}\frac{\left( t^{\alpha }-x_{0}^{\alpha }\right) ^{n+2}}{\alpha
^{n+2}\left( n+2\right) !}\left\Vert _{i}^{\rho }\mathcal{V}_{\gamma ,\beta ,\alpha
}^{\delta ,p,q;n+1}f\right\Vert _{\infty ,\left[ x_{0},b\right] }^{2}+ 
\nonumber \\
&&\frac{1}{2}\left( \frac{\Gamma \left( \gamma +\beta \right) \left( \delta
\right) _{p}}{\Gamma \left( \beta \right) \left( \rho \right) _{q}}\right)
^{n}\frac{\left( x_{0}^{\alpha }-t^{\alpha }\right) ^{n+2}}{\alpha
^{n+2}\left( n+2\right) !}\left\Vert _{i}^{\rho }\mathcal{V}_{\gamma ,\beta ,\alpha
}^{\delta ,p,q;n+1}f\right\Vert _{\infty ,\left[ a,x_{0}\right] }^{2} 
\nonumber \\
&\leq &\left( \frac{\Gamma \left( \gamma +\beta \right) \left( \delta
\right) _{p}}{\Gamma \left( \beta \right) \left( \rho \right) _{q}}\right)
^{n}\frac{\left( t^{\alpha }-x_{0}^{\alpha }\right) ^{n+2}}{\alpha
^{n+2}\left( n+2\right) !}\left\Vert _{i}^{\rho }\mathcal{V}_{\gamma ,\beta ,\alpha
}^{\delta ,p,q;n+1}f\right\Vert _{\infty ,\left[ a,b\right] }^{2}+  \nonumber
\\
&&\left( \frac{\Gamma \left( \gamma +\beta \right) \left( \delta \right) _{p}%
}{\Gamma \left( \beta \right) \left( \rho \right) _{q}}\right) ^{n}\frac{%
\left( x_{0}^{\alpha }-t^{\alpha }\right) ^{n+2}}{\alpha ^{n+2}\left(
n+2\right) !}\left\Vert _{i}^{\rho }\mathcal{V}_{\gamma ,\beta ,\alpha }^{\delta
,p,q;n+1}f\right\Vert _{\infty ,\left[ a,b\right] }^{2}  \nonumber \\
&\leq &\left( \frac{\Gamma \left( \gamma +\beta \right) \left( \delta
\right) _{p}}{\Gamma \left( \beta \right) \left( \rho \right) _{q}}\right)
^{n}\frac{\left\vert t^{\alpha }-x_{0}^{\alpha }\right\vert ^{n+2}}{\alpha
^{n+2}\left( n+2\right) !}\left\Vert _{i}^{\rho }\mathcal{V}_{\gamma ,\beta ,\alpha
}^{\delta ,p,q;n+1}f\right\Vert _{\infty }^{2},
\end{eqnarray*}
which completes the proof.
\end{proof}


\section{Concluding remarks}

We introduced a new Taylor formula and the Taylor remainder via integral, through the truncated $\mathcal{V}$-fractional derivative and the $\mathcal{V}$-fractional integral. Besides that, we discussed the $\mathcal{V}$-fractional Holder's inequality and Cauchy-Schwartz inequality \cite{CAUCHY}, fundamental for the applications performed in section 5. The applications were restricted to inequalities from the truncated $\mathcal{V}$-fractional Taylor's remainder. Applications, such as approximations of functions by polynomials and an introduction to the $\mathcal{V}$-fractional Taylor's remainder by means of Lagrange’s form, will be presented in future works. In this context, one might think of extend the Taylor's formula, to the truncated $\mathcal{V}$-fractional in $\mathbb{R}^{n}$ \cite{JEC3}.

\bibliography{ref}

\begin{thebibliography}{10}

\bibitem{SJBR}
J.~Stoer, R.~Bulirsch, Introduction to numerical analysis, Vol.~12, Springer
  Science \& Business Media, New York, 2013.

\bibitem{RARP}
A.~Ralston, P.~Rabinowitz, A first course in numerical analysis, Courier
  Corporation, New York, 2001.

\bibitem{COU}
R.~Courant, F.~John, Introduction to calculus and analysis I, Springer Science
  \& Business Media, New York, 2012.

\bibitem{IP}
I.~Podlubny, Fractional {D}ifferential {E}quations, {M}athematics in {S}cience
  and {E}ngineering, {A}cademic {P}ress, {S}an {D}iego, Vol. 198, 1999.

\bibitem{ECJT}
E.~Capelas~de Oliveira, J.~A. Tenreiro~Machado, A review of definitions for
  fractional derivatives and integral, Math. Probl. Eng., 2014, (2014)
  (238459).

\bibitem{RHM}
R.~Herrmann, Fractional calculus: An Introduction for Physicists, World
  Scientific Publishing Company, Singapore, 2011.

\bibitem{JAM}
J.~A. Tenreiro~Machado, And {I} say to myself: “{W}hat a fractional world!,
  Frac. Calc. Appl. Anal. 14, (2011) 635--654.

\bibitem{MDJA}
M.~D. Ortigueira, J.~A. Tenreiro~Machado, What is a fractional derivative?, J.
  Comput. Phys. 293. (2015) 4--13.

\bibitem{JEC2}
J.~Vanterler~da C.~Sousa, E.~Capelas~de Oliveira, Mittag-{L}effler functions
  and the truncated $\mathcal{V}$-fractional derivative, submitted, (2017).

\bibitem{UNT2}
U.~N. Katugampola, A new fractional derivative with classical properties, arXiv
  preprint arXiv:1410.6535.

\bibitem{KRHA}
R.~Khalil, M.~Al~Horani, A.~Yousef, M.~Sababheh, A new definition of fractional
  derivative, J. Comput. and Appl. Math. 264 (2014) 65--70.

\bibitem{JEC}
J.~Vanterler~da C.~Sousa, E.~Capelas~de Oliveira, A new truncated
  ${M}$-fractional derivative unifying some fractional derivatives with
  classical properties, submitted, (2017).

\bibitem{JEC1}
J.~Vanterler~da C.~Sousa, E.~Capelas~de Oliveira, ${M}$-fractional derivative
  with classical properties, submitted, (2017).

\bibitem{DRA}
D.~R. Anderson, Taylor’s formula and integral inequalities for conformable
  fractional derivatives, in Contributions in Mathematics and Engineering, in
  Honor of Constantin Caratheodory, Springer, Berlin 25-43, 2016.

\bibitem{DRA1}
D.~R. Anderson, Taylor’s formula and integral inequalities for conformable
  fractional derivatives, arxiv.org/pdf/1409.5888v1, (2014).

\bibitem{KWPA}
W.~G. Kelley, A.~C. Peterson, The theory of differential equations: classical
  and qualitative, Springer Science \& Business Media, New York, 2010.

\bibitem{CAUCHY}
J.~M. Steele, The Cauchy-Schwartz master class: an introduction to the art of
  mathematical inequalities, Cambridge University Press, New York, 2004.

\bibitem{SMZB}
M.~Z. Sarikaya, H.~Budak, New inequalities of opial type for conformable
  fractional integrals, RGMIA Rearch Report Collection 19 (2016) 10 pages.

\bibitem{JEC3}
J.~Vanterler~da C.~Sousa, E.~Capelas~de Oliveira, A truncated
  $\mathcal{V}$-fractional derivative in $\mathbb{R}^{n}$, submitted, (2017).


\end{thebibliography}
\bibliographystyle{plain}

\end{document}